\documentclass[preprint]{imsart}
%--------------------------------
\usepackage{amsmath,amsthm,amssymb,mathrsfs,dsfont} 
\usepackage{natbib}
\usepackage{bm,url,wasysym,multirow}
\usepackage{algorithm}
\usepackage{algorithmic}
\usepackage{enumerate}
\usepackage{ascmac}
\usepackage[usenames]{color}
\usepackage{ascmac}
\usepackage[pdftex]{graphicx}
\usepackage{wrapfig}
\usepackage{rotating}
\usepackage{latexsym}
\usepackage{multirow,array}
\usepackage{comment}
\usepackage{arydshln}
\usepackage[subrefformat=parens]{subcaption}
%--------------------------------

%--------------------------------
\newtheorem{theorem}{Theorem}[section]
\newtheorem{lemma}[theorem]{Lemma}
\newtheorem{col}[theorem]{Corollary}

%--------------------------------

%--------------------------------

%--------------------------------
\def\ba#1\ea{\begin{align*}#1\end{align*}} %\ba = \begin{algin*}, \ea = \end{align*}
\def\banum#1\eanum{\begin{align}#1\end{align}} %\banum = \begin{algin}, \eanum
%--------------------------------

%--------------------------------

\newcommand{\lsb}{\left[}
\newcommand{\rsb}{\right]}

\newcommand{\lpar}{\left(}
\newcommand{\rpar}{\right)}

\newcommand{\Xc}{\mathcal{X}}

\newcommand{\Bc}{\mathcal{B}}
\newcommand{\Rc}{\mathcal{R}}
\newcommand{\Tc}{\mathcal{T}}
\newcommand{\Sc}{\mathcal{S}}
\newcommand{\Fc}{\mathcal{F}}
\newcommand{\Pb}{\mathbb{P}}
\newcommand{\Rb}{\mathbb{R}}

\newcommand{\Nb}{\mathbb{N}}
\newcommand{\argmin}{{\rm arg}\mathop{\rm min\,}\limits}

\newcommand{\Psig}{P_{\sigma^2}}

\newcommand{\bPhi}{\bar{\Phi}}
\newcommand{\proj}{\mathop{\rm proj}\nolimits}
%--------------------------------

\begin{document}

\begin{frontmatter}
\title{Selective inference for the problem of regions via multiscale bootstrap}
\runtitle{Selective inference via multiscale bootstrap}

\author{\fnms{Yoshikazu} \snm{Terada}\ead[label=e1]{terada@sigmath.es.osaka-u.ac.jp}\thanksref{t1}}
\thankstext{t1}{
Jointly affiliated at 
Mathematical Statistics Team,
RIKEN Center for Advanced Intelligence Project (AIP),
1-4-1 Nihonbashi, Chuo-ku, Tokyo 103-0027, Japan.
}
\address{
Graduate School of Engineering Science, Osaka University\\
1-3 Machikaneyama-cho, Toyonaka, Osaka 560-8531, Japan\\
\printead{e1}}
\and
\author{\fnms{Hidetoshi} \snm{Shimodaira}\ead[label=e2]{shimo@i.kyoto-u.ac.jp}\thanksref{t1}}
%\thankstext{t2}{
%???.}
\address{
Graduate School of Informatics, Kyoto University\\
Yoshida Honmachi, Sakyo-ku, Kyoto, 606-8501, Japan\\
\printead{e2}}

\runauthor{Y.~Terada and H.~Shimodaira}

\begin{abstract} % 181 words

A general approach to selective inference is considered for hypothesis testing of the null hypothesis represented as an arbitrary shaped region in the parameter space of multivariate normal model.
This approach is useful for hierarchical clustering where confidence levels of clusters are calculated only for those appeared in the dendrogram, thus subject to heavy selection bias.
Our computation is based on a raw confidence measure, called bootstrap probability, which is easily obtained by counting how many times the same cluster appears in bootstrap replicates of the dendrogram.
We adjust the bias of the bootstrap probability by utilizing the scaling-law in terms of geometric quantities of the region in the abstract parameter space, namely, signed distance and mean curvature.
Although this idea has been used for non-selective inference of hierarchical clustering, its selective inference version has not been discussed in the literature.
Our bias-corrected $p$-values are asymptotically second-order accurate in the large sample theory of smooth boundary surfaces of regions, and they are also justified for nonsmooth surfaces such as polyhedral cones.
The $p$-values are asymptotically equivalent to those of the iterated bootstrap but with less computation.

\end{abstract}

\begin{keyword}
\kwd{selective inference}
\kwd{hypothesis testing}
\kwd{bootstrap resampling}
\kwd{mean curvature}
\kwd{hierarchical clustering}
\end{keyword}

\end{frontmatter}

\maketitle
%
%%
%%%------------------------------------------------------------------------------------------------------------------------------
\section{Introduction}

With recent advances in computer and measurement technologies,
big and complicated data have been common in various application fields,
and thus the importance of exploratory data analysis has been recognized.
From collected data, 
exploratory data analysis is usually used 
to discover useful information and to formulate hypotheses for further data analysis.
For hypotheses obtained by exploratory data analysis, 
classical statistical inference is commonly performed.
However, in the phase of classical inference, 
the effects of hypothesis selection based on data are often ignored,
and thus classical inference will not provide valid tests of the hypotheses.

Inference handling the effects of hypothesis selection appropriately
is called \emph{selective inference} and have been attracted much attention
on inferences after model selection, particularly variable selection in regression settings such as Lasso 
\citep{LockhartEtAl14,LeeEtAl16,fithian2014optimal,RyanTibshiraniEtAl16,RyanTibshiraniEtAl17+}
as well as closely related ideas \citep{benjamini2005false,benjamini2014selective}.
\cite{TaylorTibshirani15} provides a general introduction of selective inference.
\cite{fithian2014optimal} consider a general setting of selective inference and 
\cite{TianTaylor17+} propose the use of randomized response, which implies valid and more powerful tests.
\cite{RyanTibshiraniEtAl17+} consider a bootstrap resampling for the regression problem of \cite{RyanTibshiraniEtAl16}.

In these existing literatures, 
we mainly consider the cases that it is easy to access the parameter space or 
that we know the explicit form of the region on data space which represents the selective event.
On the other hand, in real application problems,
there are situations in which we cannot directly apply these methods.
As a motivating example, let us consider the problem to assess uncertainty in hierarchical clustering
using \emph{bootstrap probability}, which is originally introduced in \cite{Felsenstein:1985:CLP} to the hierarchical clustering of molecular sequences, known as phylogenetic inference.
Bootstrap probability of a cluster is easily computed by counting how many times the same cluster appears in bootstrap replicates.
This is implemented in R package \emph{pvclust} \citep{Suzuki:Shimodaira:2006:PRA}, 
which is used in many application fields such as cell biology (e.g., \citealp{ben2008embryonic}).
There is another approach for accessing uncertainty by estimating the optimal number of clusters via the gap statistic \citep{tibshirani2001estimating}.
Unlike the gap statistic, pvclust suffers from heavy selection bias because frequentist confidence measure is computed for each obtained cluster in the dendrogram (Fig.~\ref{fig:pvclust-tree}).
Unfortunately, in general, it is difficult to know the explicit form of the selective event that the specific cluster is obtained.
There are no existing frameworks to address this kind of issues.

\begin{figure}
\centering
\includegraphics[width=1.0\textwidth]{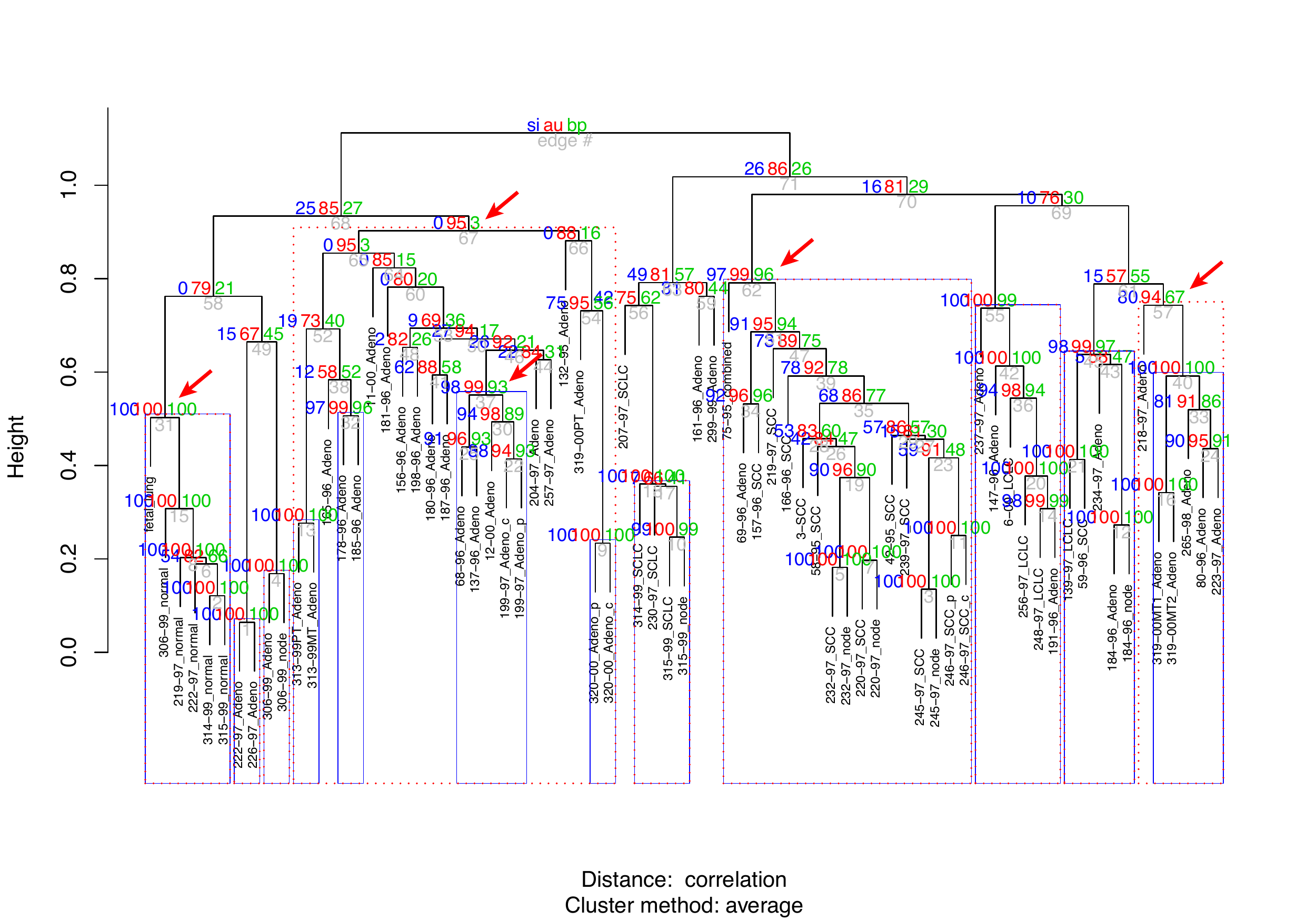}
\caption{Pvclust analysis of the lung dataset.
 Numbers at each branch are $(1-p)\times100$ for
 the raw confidence measure ($p_\mathrm{BP}$; right),
 the bias-corrected $p$-value for non-selective inference ($p_\mathrm{AU}$; middle),
 and the bias-corrected $p$-value for selective inference ($p_\mathrm{SI}$; left).
 Cluster id (shown below each branch) is numbered as $1,\ldots,71$ by the height of branch from bottom to the top.
 Boxes show the outmost clusters with $1-p>0.90$ for $p_\mathrm{SI}$ (solid line) and  $p_\mathrm{AU}$ (dotted line).
 The large difference between $p_\mathrm{AU}$ and $p_\mathrm{SI}$ indicates heavy selection bias.
 Branches with arrows will be examined later.
 See Section~\ref{sec:pvclust-experimenet} (supplementary material) for details.
}
\label{fig:pvclust-tree}
\end{figure}

Geometry plays important roles in the theory behind pvclust.
We consider testing the null hypothesis that the cluster is not ``true".
Hypotheses are represented as arbitrary shaped regions in a parameter space, and
geometric quantities, namely, signed distance and mean curvature, determine the confidence level.
This is the \emph{problem of regions} formulated in \cite{Efron:Halloran:Holmes:1996:BCL} and \cite{Efron:Tibshirani:1998:PR} for computing confidence measures for discrete decision issues such as clustering and model selection.
They argued that bootstrap probability is biased as a frequentist confidence measure, and it can be adjusted by knowing the geometric quantities.
The \emph{multiscale bootstrap} \citep{Shimodaira:2004:AUT} implemented in pvclust
is an idea to estimate the geometric quantities by changing the sample size of bootstrap replicates.
This method has been also used in phylogenetic inference \citep{Shimodaira:Hasegawa:2001:CAC,Shimodaira:2002:AUT}.
However, selective inference has not been considered so far in these works.
In this paper, we provide a general approach to selective inference for the problem of regions with a practical algorithm based on the multiscale bootstrap.

In Section~\ref{sec:overview}, 
we review the problem setting and describe our new method for a general selective inference problem.
A limitation of the method is that only a multivariate normal model is considered, whereas an extension to exponential family of distributions is mentioned in Section~\ref{sec:conclusion}.
However, the transformation invariant property of bootstrap probability leads to robustness to deviation from the normality.
Section~\ref{sec:numerical-results} presents numerical results, including a pvclust example, 
which indicates that our method reasonably works well.
Section \ref{sec:LST} provides the theoretical justification in the large sample theory 
by assuming that the boundary surfaces of the hypothesis and selective regions are smooth.
More specifically, it is shown that the selective $p$-value computed by our algorithm induces 
an unbiased selective test ignoring $O(n^{-1})$ terms.
Moreover, in order to provide a theoretical justification for the case 
that hypothesis and selective regions have possibly nonsmooth boundary surfaces, 
Section~\ref{sec:NFT} deals with the asymptotic theory of \emph{nearly flat surfaces} \citep{Shimodaira08}.
Note that, in the theoretical part of this paper, 
we deal with the case 
that the selection probability does not tend to $0$ or $1$, which corresponds to the third scenario of \cite{TianTaylor17+}.
All the technical details of experiments and proofs are found in Supplementary Material.

%%%------------------------------------------------------------------------------------------------------------------------------
%%
%

%
%%
%%%------------------------------------------------------------------------------------------------------------------------------
\section{Computing $p$-values via multiscale bootstrap} \label{sec:overview}

\subsection{Problem setting for selective inference} \label{sec:problem-setting}

We discuss the theory of the problem of regions by following the simple setting of \cite{Efron:Tibshirani:1998:PR} and \cite{Shimodaira:2004:AUT}.
Let $y\in\mathbb{R}^{m+1}$,  $m\ge0$, be an observation of random vector $Y$ following multivariate normal model
\begin{align}\label{eq:model}
Y\sim N_{m+1}(\mu,I_{m+1})
\end{align}
with unknown parameter $\mu\in \Rb^{m+1}$ and covariance identity $I_{m+1}$.

Given \emph{hypothesis regions} $H_i,\,i=1,2,\ldots$ in $\mathbb{R}^{m+1}$, we would like to know if $\mu$ belongs to $H_i$ or not.
Since $y$ is an unbiased estimate of $\mu$, a large distance between $y$ and $H_i$ is an evidence that $\mu$ does \emph{not} belong to $H_i$, leading to rejection of the null hypothesis $\mu\in H_i$ by hypothesis testing.
Instead of testing all the hypotheses, we are prone to select a part of hypotheses which may be easily rejected by the observed data.
For formulating this selection process, we introduce \emph{selective regions} $S_i,\,i=1,2,\ldots $ in $\mathbb{R}^{m+1}$, and see if $y$ belongs to $S_i$ or not.
If $y$ belongs to $S_i$ ($y\in S_i$), then $H_i$ is selected for hypothesis testing.
Otherwise $H_i$ is simply ignored and no decision will be made on $H_i$.

Our goal is to compute a non-randomized frequentist $p$-value $p(H|S,y)$ for the selective inference.
The index $i$ is omitted here, because only one hypothesis is considered at a time.
The $p$-value should control the selective rejection probability
 $P(Y\in R \mid Y\in S, \mu)$, where
 $R=\{y \mid p(H|S,y) < \alpha \} \subset S$ is the rejection region at a significance level $\alpha\in(0,1)$.
To control the selective type-I error,  $P(Y\in R \mid Y\in S, \mu)$ is not more than $\alpha$ at any $\mu\in H$.
Unbiased tests further request that it is not less than $\alpha$ at any $\mu\not\in H$, and thus
it equals $\alpha$ at any $\mu\in\partial H$.

A simple model for publication bias, which is called as the \emph{file drawer problem} by \cite{rosenthal1979file},
is easily solved for selective inference \citep{fithian2014optimal,TianTaylor17+}, where
$y, \mu \in \mathbb{R}$ ($m=0$), $H=\{\mu \mid \mu \le 0 \}$ and $S=\{y \mid y > c\}$ for some $c\in\mathbb{R}$.
Noticing $\partial H = \{ 0 \}$, an unbiased test is obtained by specifying $R=\{y \mid y > d\}$ with
${\bar\Phi(d)}/{\bar\Phi(c)} = \alpha$, and we have
\begin{equation} \label{eq:file-drawer-c}
	p(H|S,y) = \bar\Phi(y)/\bar\Phi(c),\quad y>c,
\end{equation}
where $\bar\Phi(x)=1-\Phi(x)$ is the upper tail probability of the standard normal distribution.

In this paper, we provide \emph{approximately unbiased $p$-values} for arbitrary shaped regions ($m\ge0$), which approximately satisfy
\begin{align}\label{eq:goal}
\frac{P(p(H | S,Y)<\alpha \mid \mu)}{P(Y\in S\mid \mu)} =  \alpha,\quad \forall\mu \in \partial H
\end{align}
up to specified asymptotic accuracy.
This will be solved by adjusting deviation from $p(H|S,y) = \bar\Phi(y_{m+1})/\bar\Phi(c)$,
where the file drawer problem (\ref{eq:file-drawer-c}) is considered for $y_{m+1}$ as
$H=\{\mu \mid \mu_{m+1}\le0 \}$ and $S=\{y \mid y_{m+1} >c \}$.
By setting $S=\mathbb{R}^{m+1}$, our argument reduces to the ordinary (non-selective) inference for computing $p(H|y)$, which approximately satisfies
\begin{align}\label{eq:goal-unselective}
P(p(H | Y)<\alpha \mid \mu) =  \alpha,\quad \forall\mu \in \partial H,
\end{align}
by adjusting deviation from $p(H|y) = \bar\Phi(y_{m+1})$.
Although unbiased tests are sometimes criticized for non-existence \citep{Lehmann:1952:TMH} and
unfavorable behavior \citep{perlman1999emperor}, we avoid these issues by considering only approximate solutions based on (\ref{eq:file-drawer-c}).

Algorithm~\ref{alg:simbp} shown in Section~\ref{sec:method} computes the $p$-values from the multiscale bootstrap probabilities of $H$ and $S$.
The bootstrap probability of region $H$ at scale $\sigma^2>0$ is defined by
\[
\alpha_{\sigma^2}(H | y)=P_{\sigma^2}(Y^\ast\in H \mid y),
\]
where $P_{\sigma^2}(\cdot | y)$ is the probability with respect to
\begin{align}\label{eq:bpmodel}
Y^\ast|y \sim N_{m+1}(y,\sigma^2I_{m+1}).
\end{align}
All we need for computing the $p$-value are bootstrap probabilities at several $\sigma^2$ values.
We consider the parametric bootstrap (\ref{eq:bpmodel}) in the theory, but we perform nonparametric bootstrap in real applications; the connection between the two resampling schemes is explained in Section~\ref{sec:pvclust-intro}.

Setting a good $S$ for a given $H$ would be an interesting issue.
For increasing the chance of rejecting $H$, setting $S \subset H^c$, i.e., a subset of the complement set $\Rb^{m+1}\setminus H$, is reasonable, because observing $y\in H$ would not be an evidence against the null hypothesis.
On the other hand, coarser selection would improve the power, according to the monotonicity of selective error in the context of ``data curving'' \citep{fithian2014optimal}.
A compromise would be $S = H^c$, because it is the largest (coarsest) region that does not overlap with $H$.
Taking a smaller (finer) selective region reduces the ``leftover information''.
Throughout this paper, we assume $S = H^c$, thus $\partial S = \partial H$, in illustrative examples and informal argument.

\subsection{Bootstrap probability in pvclust} \label{sec:pvclust-intro}

The argument below as well as Section~\ref{sec:pvclust-details} (supplementary material) explains
how the theoretical setting of the previous section is related to the nonparametric bootstrap implemented in pvclust. 
Let us consider hierarchical clustering of the lung dataset (available in pvclust) of micro-array expression profiles of $n=916$ genes for $p=73$ tissues \citep{garber2001diversity}.
In our setting, genes, instead of tissues, are independent samples (see Section~\ref{sec:pvclust-sampling} for a specific model).
Then the tree building process is quite similar to phylogenetic inference
\citep{Felsenstein:1985:CLP,Efron:Halloran:Holmes:1996:BCL},
where sites of aligned DNA sequences, instead of species, are independent samples.

Hierarchical clustering is formally described as follows.
The dataset of sample size $n$ is denoted as $\Xc_n=(x_1,\dots,x_n)$ with each $x_i \in \mathbb{R}^p$.
Euclidean distance $\frac{1}{n}\sum_{t=1}^n (x_{ti} - x_{tj})^2$ and sample correlation are commonly used for pairwise distances between tissues. 
Let $d_n = {\tt dist}(\Xc_n) \in\mathbb{R}^{p(p-1)/2}$ be the lower-triangular part of the $p\times p$ distance matrix, from which a tree building algorithm computes the dendrogram as shown in Fig.~\ref{fig:pvclust-tree}.
A cluster $G \subset \{1,\ldots,p\}$ is meant a subset of the $p$ tissues, and $k_\text{select}$ is the number of clusters appeared in the dendrogram, excluding trivial clusters $\{1\},\ldots, \{p\},\{1,\ldots,p\}$.
For $p=73$ tissues, only $k_\text{select} = p-2 = 71$ clusters are selected from $k_\text{all}=2^p-(p+2) = 9.44\times 10^{21}$ possible clusters.
We denote the dendrogram as $ {\tt hclust}(d_n) = \{G_1,\ldots, G_{k_\text{select}} \}$.

Before discussing hypothesis testing, we have to clarify what ``true'' clusters mean here.
We imagine a situation that  infinitely many genes can be collected by taking the limit of $n\to\infty$.
Now the dataset  $\Xc_\infty$ and the distance matrix $d_\infty={\tt dist}(\Xc_\infty) \in \mathbb{R}^{p(p-1)/2}$ can be interpreted as 
the population and the ``true" distance matrix, respectively.
By applying the tree building algorithm to $d_\infty$, we get ``true'' dendrogram ${\tt hclust}(d_\infty)$ as well as ``true'' clusters in it.
They could be very poor representations of reality, but simply what we would observe when the number of genes is very large.

For quantifying the random variation of ${\tt hclust}(\Xc_n)= {\tt hclust}({\tt dist}(\Xc_n))$, we generate bootstrap replicates by resampling $x_i$ with replacement from $\Xc_n$.
Let $\Xc_{n'}^*=(x_1^*,\dots,x_{n'}^*)$ be a bootstrap replicate of sample size $n'$.
In the ordinary bootstrap, the sample size is the same as the original data, and $n'=n$.
Similar to the subsampling or $m$-out-of-$n$ bootstrap \citep{Politis:Romano:1994:LSC}, we allow $n'
$ can be any positive integer in the multiscale bootstrap.
For each $\Xc_{n'}$, we compute $d_{n'}^* = {\tt dist}(\Xc_{n'}^*)$ and ${\tt hclust}(d_{n'}^*)$.
We repeat this process $B$ times, say $B=1000$, to generate ${\tt hclust}(\Xc_{n'}^{* b})$, $b=1,\ldots, B$.
For a cluster $G \in {\tt hclust}(\Xc_n)$, let $C(G)$ be the number of times that the same cluster appears in the $B$ instances of bootstrap replicates
\begin{equation} \label{eq:pvclust-count}
	C(G) = \#\{\Xc_{n'}^{* b} \mid  G \in {\tt hclust}(\Xc_{n'}^{* b}), \,b=1,\ldots,B  \}.	
\end{equation}
Then $C(G)/B$ is an estimate of the bootstrap probability of the cluster $G$ with error $O_p(B^{-1/2})$.
In Fig.~\ref{fig:pvclust-tree}, this value for $n'=n$ is shown at each branch as $1-p_\mathrm{BP}$,
which has been used widely as a confidence measure of the cluster in phylogenetic analysis too \citep{Felsenstein:1985:CLP}.

\subsection{Our proposed method} \label{sec:method}

\begin{figure}
\centering
\includegraphics[width=0.45\textwidth]{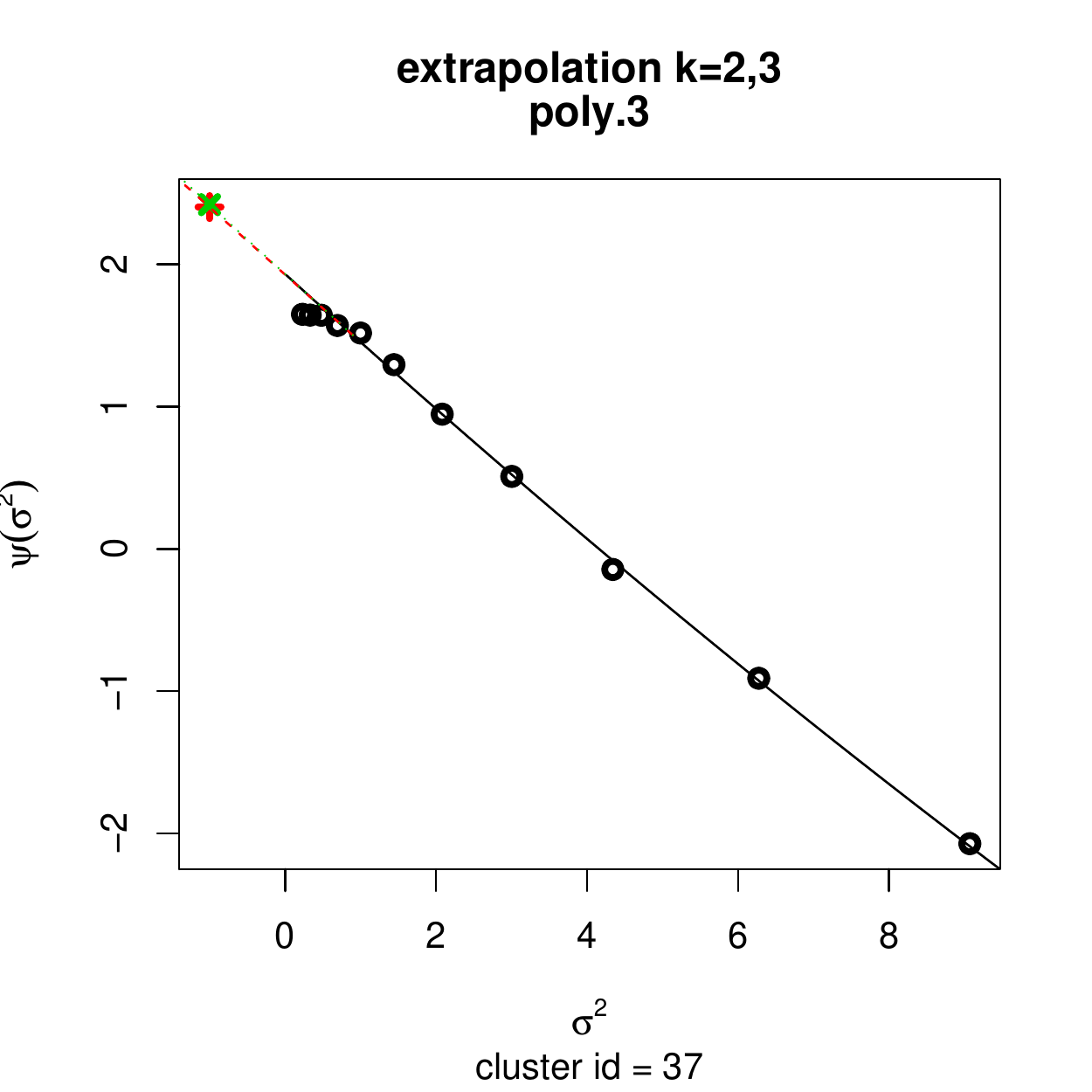}
\includegraphics[width=0.45\textwidth]{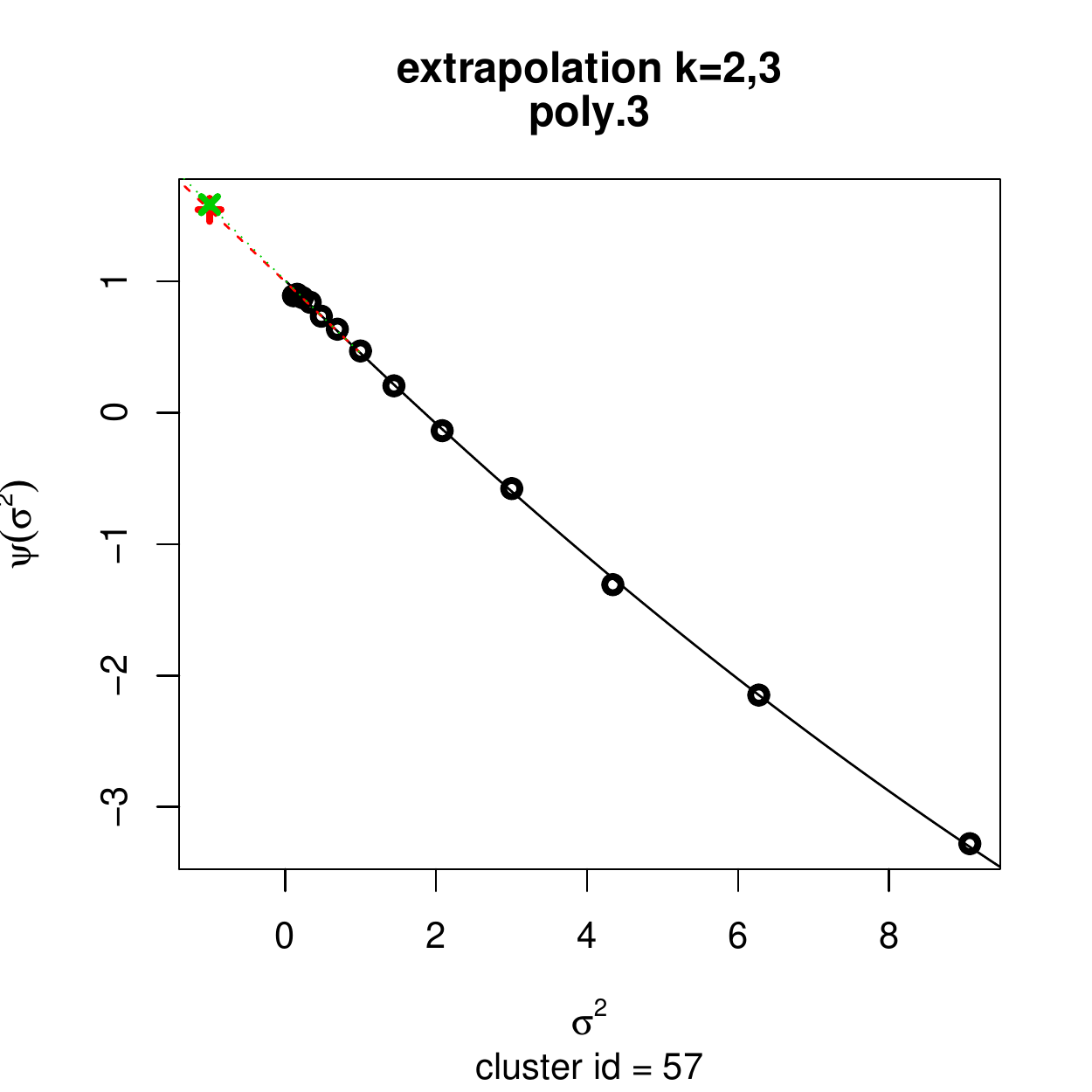}

\includegraphics[width=0.45\textwidth]{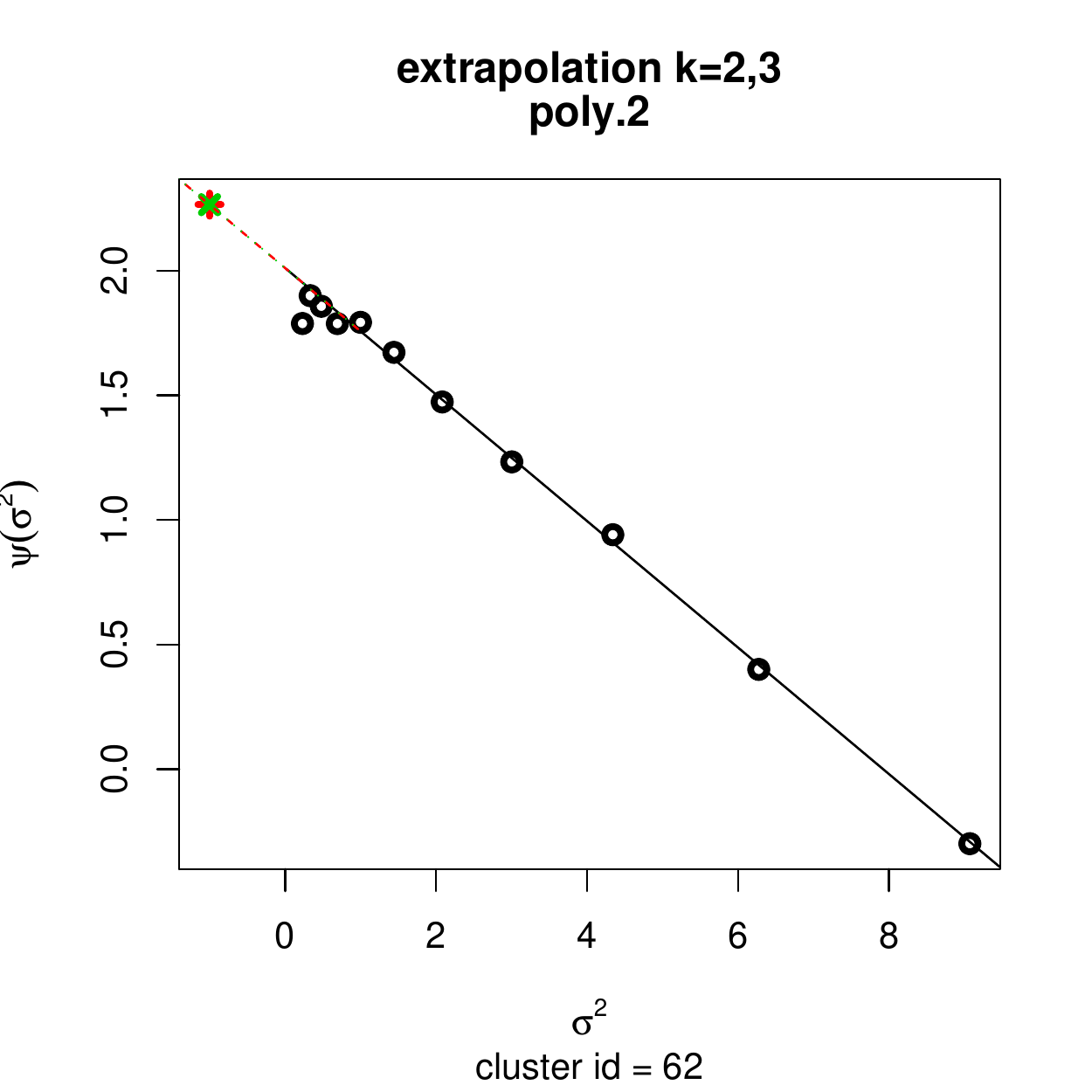}
\includegraphics[width=0.45\textwidth]{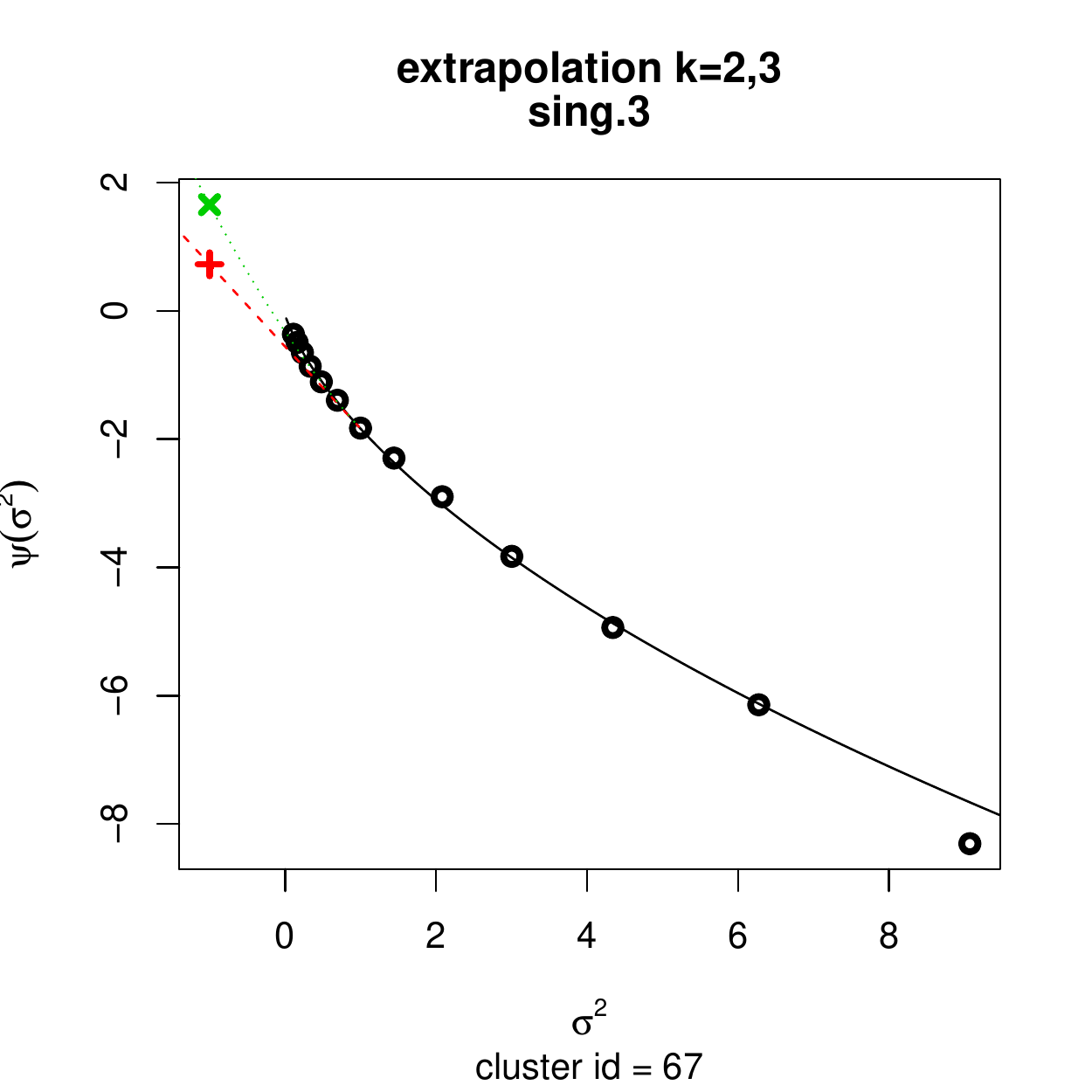}

\caption{Fitting of a parametric model $\varphi_H(\sigma^2|\beta)$
to the normalized bootstrap $z$-values $\psi_{\sigma^2}(H|y)$ at 13 values of $\sigma^2$
for cluster id = 37, 57, 62, and 67 of the lung dataset.
The best model is selected by AIC as indicated in each panel (poly.2, poly.3 and sing.3 which are defined in Section~\ref{sec:models}).
Extrapolation of $\psi_{\sigma^2}(H|y)$ to $\sigma^2=-1$ is computed by
$\varphi_{H,k}$ with $k=2$ $(+)$ and $k=3$ $(\times)$.
See Section~\ref{sec:pvclust-experimenet} (supplementary material) for details.}
\label{fig:pvclust-sbfit}
\end{figure}

In this paper, we propose a general multiscale bootstrap algorithm for computing approximately unbiased $p$-values of selective inference.
We assume that there exists a transformation $f_n$ so that (\ref{eq:model}) holds for $y = f_n(\Xc_n)$ and (\ref{eq:bpmodel}) holds for $y^* = f_n(\Xc_{n'}^*)$.
The nonparametric version of multiscale bootstrap changes the sample size $n'$ of $\Xc_{n'}^*$ and in effect changes the scale $\sigma^2=n/n'$ of $Y^*$ in (\ref{eq:bpmodel}).

For example, a realization of $f_n$ for pvclust is given in Section~\ref{sec:pvclust-fn} (supplementary material) so that
the event $G \in {\tt hclust}(\Xc_n)$ corresponds to the event $y\in S$ and the hypothesis region is specified as $H=S^c$.
In other words, for the clusters in the obtained dendrogram, we perform selective inference to test the null hypothesis that the cluster is \emph{not} true.
Then the bootstrap probability of $S$ is computed as
\[
	\alpha_{\sigma^2}(S | y) = C(G)/B + O_p(B^{-1/2})
\]
from the frequency $C(G)$ in (\ref{eq:pvclust-count}), and the bootstrap probability of $H=S^c$ is obtained as $\alpha_{\sigma^2}(H | y) = 1- \alpha_{\sigma^2}(S | y)$.

More generally, by assuming that we can tell if $y^* \in H$ and $y^* \in S$ from $\Xc_{n'}$, the bootstrap probabilities $\alpha_{\sigma^2}(H | y)$ and $\alpha_{\sigma^2}(S | y)$ at several $\sigma^2$ values are computed as frequencies with respect to $B$ instances of $\Xc_{n'}^*$ at several $n'$ values.
Since we actually work on $\Xc_{n'}^*$ for computing bootstrap probabilities, the transformation $f_n$ does not need to be identified in practice.
We define \emph{normalized bootstrap $z$-value} 
\citep{Shimodaira08,Shimodaira2014higherorder} as
\begin{equation} \label{eq:psi-H}
  \psi_{\sigma^2}(H|y) = 
   \sigma\bar\Phi^{-1} ( \alpha_{\sigma^2}(H | y)),\quad \sigma^2>0,
\end{equation}
and \emph{normalized bootstrap probability} as
\begin{equation} \label{eq:p-sigma}
  p_{\sigma^2}(H|y)=\bar\Phi(\psi_{\sigma^2}(H | y)).
\end{equation}
Given bootstrap probabilities at several $\sigma^2$ values, the idea is to estimate the functional forms of $\psi_{\sigma^2}(H|y)$ and $\psi_{\sigma^2}(S|y)$ with respect to $\sigma^2$ using an appropriate parametric model $\varphi(\sigma^2 | \beta)$ with parameter $\beta$.
Examples of model fitting are shown in Fig.~\ref{fig:pvclust-sbfit}.
The theory shows that a good model is the linear model
\begin{equation} \label{eq:phi-linear}
 \varphi(\sigma^2 | \beta) = \beta_0 + \beta_1 \sigma^2
\end{equation}
with respect to $\sigma^2$; denoted as poly.2 in Section~\ref{sec:models}.
Using the estimated parameter $\hat\beta=(\hat\beta_0, \hat \beta_1)$, we extrapolate (\ref{eq:psi-H}) and (\ref{eq:p-sigma}) to $\sigma^2\le 0$, from which we can calculate an approximately unbiased $p$-value for selective inference as well as that for non-selective inference.
Our method is summarized in Algorithm~\ref{alg:simbp}.
The procedure (A) is justified for smooth boundary surfaces of the regions in Section~\ref{sec:LST},
and (B) is justified for both smooth and nonsmooth surfaces in Section~\ref{sec:NFT}.

\begin{algorithm}                      
\caption{Computing approximately unbiased $p$-values}         
\label{alg:simbp}
\begin{algorithmic}[1]
\STATE Specify several $n^\prime\in \Nb$ values, and set $\sigma^2=n/n'$ for each $n'$. Set the number of bootstrap replicates $B$, say, 1000.
\STATE For each $n'$, perform bootstrap resampling to generate $Y^*$  for $B$ times and compute $\alpha_{\sigma^2}(H | y)=C_H/B$ and $\alpha_{\sigma^2}(S | y)=C_S/B$ by counting the frequencies
$C_H = \#\{Y^\ast\in H\}$ and $C_S = \#\{Y^\ast\in S\}$.
We actually work on $\Xc_{n'}^\ast$ instead of $Y^\ast$.
Compute $\psi_{\sigma^2}(H|y) = \sigma\bar\Phi^{-1} ( \alpha_{\sigma^2}(H|y))$ and
$\psi_{\sigma^2}(S|y) = \sigma\bar\Phi^{-1} ( \alpha_{\sigma^2}(S|y))$.
Note that, for the case of $S=H^c$, we only need to count $C_S$, because
$C_H = B-C_S$, $\alpha_{\sigma^2}(H|y) = 1-\alpha_{\sigma^2}(S|y)$ and $\psi_{\sigma^2}(H|y) = - \psi_{\sigma^2}(S|y)$.
\STATE Estimate parameters $\beta_H(y)$ and $\beta_S(y)$ by fitting models
\[
\psi_{\sigma^2}(H | y)=\varphi_{H}(\sigma^2 | \beta_H) \text{ and }
\psi_{\sigma^2}(S | y)=\varphi_{S}(\sigma^2 | \beta_S),
\]
respectively.
The parameter estimates are denoted as $\hat{\beta}_H(y)$ and $\hat{\beta}_S(y)$.
If we have several candidate models, apply above to each and choose the best model based on AIC value.

\STATE Approximately unbiased $p$-values of non-selective inference ($p_\mathrm{AU}$) and of
selective inference ($p_\mathrm{SI}$) are computed by one of (A) and (B) below.
\begin{description}
\item{(A)} Extrapolate $\psi_{\sigma^2}(H|y)$ and $\psi_{\sigma^2}(S|y)$ to $\sigma^2=-1$ and 0, respectively, by
\[
z_H = \varphi_{H}(-1 | \hat{\beta}_H(y)) \text{ and } z_S =  \varphi_{S}(0 | \hat{\beta}_S(y)),
\]
and then compute $p$-values by
\[
p_{\mathrm{AU}}(H|y) = \bar{\Phi}(z_H) \text{ and } 
p_{\mathrm{SI}}(H | S, y) = \frac{\bar{\Phi}(z_H)}{\bar{\Phi}( z_H + z_S )}.
\]

\item{(B)}  Specify $k\in \Nb$, $\sigma_0^2,\sigma_{-1}^2>0$ (e.g., $k=3$ and $\sigma_{-1}^2=\sigma_0^2=1$).
Extrapolate $\psi_{\sigma^2}(H|y)$ and $\psi_{\sigma^2}(S|y)$ to $\sigma^2=-1$ and 0, respectively, by
\[
z_{H,k} = \varphi_{H,k}(-1 | \hat{\beta}_H(y),\sigma_{-1}^2) \text{ and } 
z_{S,k} =  \varphi_{S,k}(0 | \hat{\beta}_S(y),\sigma_0^2),
\]
where the Taylor polynomial approximation of $\varphi_H$ at $\tau^2>0$ with $k$ terms is:
\[
  \varphi_{H,k}(\sigma^2 | \hat{\beta}_H(y),\tau^2)
=\sum_{j=0}^{k-1}
\frac{(\sigma^2-\tau^2)^j}{j!}
\frac{\partial^j \varphi_{H}(\sigma^2 | \hat{\beta}_H(y))}{\partial (\sigma^2)^j}\Biggr|_{\sigma^2=\tau^2},
\]
and $\varphi_{S,k}$ is defined similarly. Then compute $p$-values by
\[
p_{\mathrm{AU},k}(H|y) = \bar{\Phi}(z_{H,k})  \text{ and } 
p_{\mathrm{SI},k}(H | S, y) = \frac{\bar{\Phi}(z_{H,k})}{\bar{\Phi}( z_{H,k} + z_{S,k} )}.
\]
\end{description}
\end{algorithmic}
\end{algorithm}

The proposed method satisfies the following two properties.
\begin{description}
\item{(a)} Using only binary responses whether $Y^\ast\in H$ and $Y^\ast \in S$.
\item{(b)} Resampling only from $y$ instead of the null distribution.
\end{description}
With these properties, it is not necessary to know the dimension $m$, the transformation $f_n$, and the shapes of $H$ and $S$ in the parameter space, thus leading to wide applications and robustness to deviation from the multivariate normal model.

There could be several $f_n$ (possibly different $m$) exist, and someone may wonder that $p$-values depend on it.
However, the bootstrap probabilities as well as the $p$-values computed from them are transformation invariant, and they are in fact computed in the original space of $\Xc_n$ without even defining the transformations.
This property may be referred to as \emph{bootstrap trick} by analogy with the kernel trick of the support vector machine.

\subsection{Preview of the large sample theory} \label{sec:motivation}

Why does this method work?
To explain the reason for the case of $S=H^c$, let us introduce two geometric quantities of \cite{Efron:1985:BCI}.
First note that projection is the point on $\partial H$ closest to $y\in \Rb^{m+1}$ defined as
\[
  \proj(H|y)= \argmin_{\mu\in\partial H} \|y - \mu\|.
\]
\emph{Signed distance} from $y$ to $\hat\mu=\proj(H|y)$, denoted as $t=\eta(H|y)$, is
$t=\|y - \hat\mu\|>0$ for $y\not\in H$ and $t=-\|y - \hat\mu\|\le0$ for $y\in H$.
\emph{Mean curvature} of $\partial H$ at $\hat\mu$, denoted as $\hat\gamma = \gamma(H|y)$, is half the trace of Hessian matrix of the surface at $\hat\mu$ with sign $\hat\gamma>0$ when curved towards $H$ (e.g., convex $H$) and $\hat\gamma\le0$ otherwise (e.g., concave $H$).
For $\proj(H|\mu)$, the signed distance and the mean curvature are $\eta = \eta(H|\mu)$ and  $\gamma = \gamma(H|\mu)$.
Then signed distance $T=\eta(H|Y)$ follows the normal distribution
\[
  T \sim N(\eta+\gamma, 1)
\]
by ignoring the error of $O_p(n^{-1})$, where $\eta=O(1)$ under the local alternatives and $\gamma=O(n^{-1/2})$.
Our large sample theory has second order asymptotic accuracy correct up to $O_p(n^{-1/2})$ by ignoring $O_p(n^{-1})$ terms, and the equality with this accuracy will be indicated by $\doteq$ below.
For example, $\hat \gamma = \gamma + O_p(n^{-1})$ will be denoted as $\hat\gamma \doteq \gamma$.

Hypothesis testing is now a slight modification of the file drawer problem.
The null hypothesis $\mu\in H$ is expressed as $\eta \le0$ and the selective event $y\in H^c$ is expressed as $t>0$.
Since $T - \gamma(H|Y)  \sim N(\eta, 1)$, ignoring $O_p(n^{-1})$, is the pivot statistic distributed as $N(0,1)$ at $\eta=0$,
the $p$-value for the ordinary (non-selective) inference is 
\[
	p(H|y) \doteq \bar\Phi(t-\hat\gamma).
\]
The selective $p$-value (\ref{eq:file-drawer-c}) for the selective event $t-\hat\gamma > - \hat\gamma$  ( $\doteq -\gamma$) becomes
\[
	p(H|H^c,y) \doteq \bar\Phi(t-\hat\gamma)/\bar\Phi(-\hat\gamma),\quad t>0.
\]
Since $p(H|H^c,y)  \doteq p(H|y)/\bar\Phi(-\gamma)$,
the selective $p$-value adjusts the non-selective $p$-value by the selection probability $\bar\Phi(-\gamma)$ in the denominator.
These $p$-values are particularly simple when $\gamma  \equiv 0$, i.e., the boundary surface $\partial H$ is flat.
$p(H|y) \doteq \bar\Phi(t)$ is the $p$-value for one-tailed $z$-test of the null hypothesis $\eta=0$, and
$p(H|H^c,y) \doteq \bar\Phi(t)/\bar\Phi(0) \doteq 2 p(H|y)$ is the $p$-value for two-tailed $z$-test.
The selective inference considers the fact that we do not know which of $t>0$ and $t<0$ is observed in advance, thus doubling the non-selective $p$-value.

Bootstrap probability is also expressed in terms of the geometric quantities.
From the argument of \cite{Efron:Tibshirani:1998:PR} and \cite{Shimodaira:2004:AUT},
signed distance $T^* = \eta(H|Y^*)$ for the bootstrap replicate $Y^*$ follows
\[
  T^* | y \sim N(t+\hat\gamma \sigma^2, \sigma^2).
\]
Therefore, the bootstrap probability is 
\[
	\alpha_{\sigma^2}(H|y) = P_{\sigma^2}(T^*\le 0 \mid y) \doteq \bar\Phi(t \sigma^{-1} + \hat\gamma \sigma),
\]
which will be shown rigorously in (\ref{eq:bp-at-origin}).
In particular for $\sigma^2=1$, it becomes the ordinary Bootstrap Probability (BP)
\[
  p_{\mathrm{BP}}(H|y)=\alpha_1(H|y) \doteq \bar\Phi(t+\hat\gamma).
\]
This provides naive estimation of the $p$-values as
$p_\text{BP}(H|y) = p(H|y)+O_p(\gamma)$ and $2p_\text{BP}(H|y) = p(H|H^c,y)+O_p(\gamma)$.
The bias caused by $\gamma=O(n^{-1/2})$ will be adjusted by multiscale bootstrap.
Note that the scaling-law of $\alpha_{\sigma^2}(H|y)$ is intuitively obvious by rescaling (\ref{eq:bpmodel}) with the factor $\sigma^{-1}$ so that  $t$ and $\hat\gamma$ in $\alpha_1(H|y)$ are replaced with $t\sigma^{-1}$ and $\hat\gamma\sigma$, respectively.

We can compute the $p$-values from the multiscale bootstrap probabilities.
By fitting the linear model (\ref{eq:phi-linear}) to the observed values of
\[
  \psi_{\sigma^2}(H|y) = \sigma \bar\Phi^{-1} (\alpha_{\sigma^2}(H|y)) \doteq t + \hat\gamma \sigma^2
\]
at several $\sigma^2$ values, the regression coefficients are estimated as $\hat\beta_0 \doteq t$ and $\hat\beta_1 \doteq \hat\gamma$, from which we can extrapolate $\psi_{\sigma^2}(H|y)$ to $\sigma^2\le0$.
In particular for $\sigma^2=-1$, we have the pivot statistic
\[
	\psi_{-1}(H|y) \doteq t - \hat\gamma.
\]
Therefore, $p(H|y)$ is computed as 
\[
  p_{\mathrm{AU}}(H|y) = \bar\Phi(\psi_{-1}(H|y)) \doteq \bar\Phi(t-\hat\gamma)
\]
for the Approximately Unbiased (AU) test of non-selective inference \citep{Shimodaira:2002:AUT}.
The error in (\ref{eq:goal-unselective}) for $ p_{\mathrm{AU}}(H|y)$ is in fact $O(n^{-3/2})$ \citep{Shimodaira:2004:AUT}, which was originally shown for the third-order pivot statistic \citep{Efron:1985:BCI,Efron:Tibshirani:1998:PR}. 
Noticing (\ref{eq:p-sigma}), we may state that $p_{\mathrm{AU}}(H|y)=p_{-1}(H|y)$ adjusts the bias of $p_{\mathrm{BP}}(H|y)=p_{1}(H|y)$ by formally changing $\sigma^2=1$ ($n'=n$) in $p_{\sigma^2}(H|y)$ to $\sigma^2=-1$ ($n'=-n)$.

The selective $p$-value $p(H|H^c,y)$ is computed similarly.
Our idea for approximately unbiased test of Selective Inference (SI)
is to calculate $\psi_{0}(H|y)\doteq t$ as well as $\psi_{-1}(H|y)\doteq t-\hat\gamma$ by the multiscale bootstrap, from which we define
\[
  p_{\mathrm{SI}}(H|H^c,y)  = \frac{\bar\Phi(\psi_{-1}(H|y))}{\bar\Phi(\psi_{-1}(H|y) - \psi_{0}(H|y))}
  \doteq \frac{\bar\Phi(t - \hat\gamma)}{\bar\Phi(- \hat\gamma)}.
\]
This $p$-value satisfies (\ref{eq:goal}) with error $O(n^{-1})$.
For simplifying the notation, we may write $p_{\mathrm{SI}}(H|y)$ for $p_{\mathrm{SI}}(H|H^c,y)$ by omitting $H^c$.
It follows from $\psi_0(H|y) = -\psi_0(H^c|y)$ that $ p_{\mathrm{SI}}(H|y)$ is $p_{\mathrm{SI}}(H|S,y)$ of Algorithm~\ref{alg:simbp} for the case $S=H^c$.

\subsection{Bias correction by resampling from the null distribution} \label{sec:intro-nulldist}

Multiscale bootstrap generates $Y^*$ from $y$ in (\ref{eq:bpmodel}).
Replacing $y$ with $\proj(H|y)$,
\begin{equation} \label{eq:yboot-zero}
	Y^* | \proj(H|y) \sim N_{m+1}(\proj(H|y), I_{m+1})
\end{equation}
simulates the null distribution of $Y$ generated from $\mu\in\partial H$.
By letting $t=0$ in $p_{\mathrm{BP}}(H|y)$, we have
$p_{\mathrm{BP}}(H|\proj(H|y)) \doteq \bar\Phi(\hat\gamma)$,
and therefore
\[
	 z_\text{proj}(H|y) = \bar\Phi^{-1}(p_{\mathrm{BP}}(H|\proj(H|y))) \doteq \hat\gamma.
\]
This is the idea of \cite{Efron:Tibshirani:1998:PR} to estimate $\hat\gamma$ for adjusting $p_{\mathrm{BP}}(H|y)$ by
$p_\mathrm{ET}(H|y) = \bar\Phi( \psi_1(H|y) - 2 z_\text{proj}(H|y) )  \doteq p(H|y)$.
It is easily extended to selective inference 
by $p_\text{ET-SI}(H|y) = p_\mathrm{ET}(H|y)/ \bar\Phi(- z_\text{proj}(H|y) ) \doteq p(H|H^c,y)$
for the case of $S=H^c$,
and an extension to general $S$ is given in Section~\ref{sec:iterated-bootstrap}.
We will show in Theorem~\ref{theorem:double-bootstrap}
that the double bootstrap \citep{Hall:1986:BCI,Efron:Tibshirani:1998:PR} also computes the adjusted $p$-value.
An advantage of multiscale bootstrap over these methods is that it does not require expensive computation of the null distribution.

%%%------------------------------------------------------------------------------------------------------------------------------
%%
%

%
%%
%%%------------------------------------------------------------------------------------------------------------------------------
\section{Numerical Results} \label{sec:numerical-results}

\subsection{An illustrative example of pvclust} \label{sec:pvclust-example}

\begin{figure}
\centering
% \begin{minipage}[t]{0.49\textwidth}
% \centering
% \includegraphics[width=\textwidth]{./fig_draft/20171024-lung3-pvalue2-best-k3-20171024-lung1-fit-wide.pdf}
% (a)~confidence level
% \end{minipage}
% \begin{minipage}[t]{0.49\textwidth}
% \centering
\includegraphics[width=0.7\textwidth]{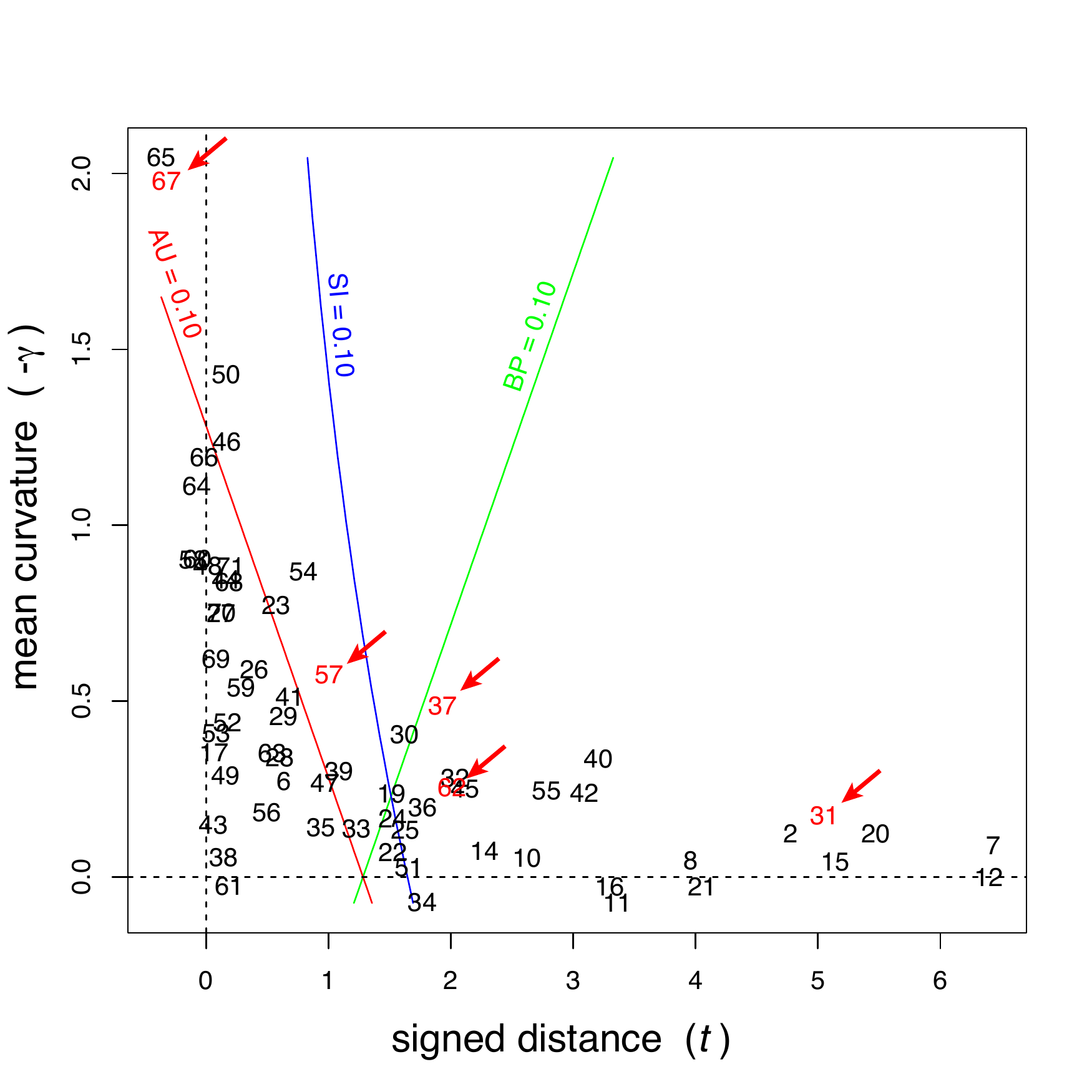}
%(b)~signed distance and mean curvature
%\end{minipage}
\caption{
%{\rm (a)}~Confidence level $1-p$ of the lung dataset for the 71 clusters.
%The number of clusters with $1-p>0.9$ is 44 for $p_\mathrm{AU}$ (A), which is larger than 33 for $p_\mathrm{SI}$ (S) and 34 for $p_\mathrm{BP}$ (B).
%$p_\mathrm{AU}$ and $p_\mathrm{SI}$ are connected by segments.
%{\rm (b)}~
Cluster id's are plotted at $(t,-\hat\gamma)$.
Contour lines are shown for $p_\mathrm{AU}$, $p_\mathrm{SI}$ and $p_\mathrm{BP}$ at $\alpha=0.10$.
Clusters with arrows are those in Fig.~\ref{fig:pvclust-tree}.
}
\label{fig:pvclust-pvalue}
\end{figure}

For each cluster in Fig.~\ref{fig:pvclust-tree}, we test the null hypothesis that the cluster is not true.
Therefore, clusters with $1-p>0.9$ are identified as true by rejecting their null hypotheses at significance level $\alpha=0.1$.
However, the decision depends on the type of $p$-values.
Which $p$-value should we use for making a decision?

Let us look at the branch of cluster id = 31 consisting of six non-tumor tissues
 (the left most box in Fig.~\ref{fig:pvclust-tree}).
All the other 67 tissues are lung tumors from patient.
For this cluster, it is very natural to use the non-selective $p$-value $p_\mathrm{AU}$ for controlling  (\ref{eq:goal-unselective}), because a scientist may hypothesize that the six tissues are different from the others before looking at the data.
For most of the other clusters, however, we should use the selective $p$-value $p_\mathrm{SI}$ for controlling (\ref{eq:goal}), because they are discovered only after looking at the data.
On the other hand, $p_\mathrm{BP}$ and $2p_\mathrm{BP}$ can be interpreted as naive estimates of $p_\mathrm{AU}$ and $p_\mathrm{SI}$, respectively, when $|\gamma|$ is small, but they are not quite good estimates here.

% Looking at Fig.~\ref{fig:pvclust-pvalue}~(a), long segments indicate large difference between $p_\mathrm{AU}$ and $p_\mathrm{SI}$.
% $p_\mathrm{AU}$ is smaller than $p_\mathrm{SI}$ and $p_\mathrm{BP}$, leading to more rejection of null hypotheses.
% This is confirmed in Fig.~\ref{fig:pvclust-pvalue}~(b), where the contour line of $p_\text{AU}$ is left to the other two contour lines, because null hypotheses are rejected on the right hand side of the contour lines.

Differences of $p$-values with respect to the geometric quantities are illustrated in Fig.~\ref{fig:pvclust-pvalue}.
We plotted $\psi_{0}(H|y) \doteq t$ for the $x$-axis and $\psi_{-1}(H|y) - \psi_{0}(H|y) \doteq -\hat\gamma$ for the $y$-axis.
On the $x$-axis ($\hat\gamma=0$), $p_\text{AU}=p_\text{BP}=\bar\Phi(t)$, $p_\text{SI}=2\bar\Phi(t)$,
and they are adjusted by $\hat\gamma$ as seen in the contour lines.
The contour line of $p_\text{AU}$ is left to the other two lines, indicating that $p_\text{AU}$ is smaller than $p_\text{BP}$ and $p_\text{SI}$, thus rejecting more null hypotheses.
Some clusters suggest estimation error, such as $t<0$ and $-\hat\gamma<0$, but the problem seems minor overall.

Computation of the $p$-values is examined in Fig.~\ref{fig:pvclust-sbfit}.
Looking at cluster id = 37, 57 and 62, fitting of poly.2, namely the linear model (\ref{eq:phi-linear}) or (\ref{eq:polyk}) with $k=2$,
is very good so that the extrapolation by substituting $\sigma^2=-1$ in the linear model is good enough, and the other sophisticated extrapolation methods may not be necessary.
This suggests the validity of the large sample theory of the second order asymptotic accuracy (Section~\ref{sec:motivation} and Section~\ref{sec:LST}).

However, the singular model sing.3, namely (\ref{eq:singk}) with $k=3$, fits much better for cluster id = 67.
Then the extrapolation to $\sigma^2=-1$ by the Taylor polynomial approximation $\varphi_{H,k}$ depends on $k$, giving
$1-p_\mathrm{AU}=0.77$ with $k=2$ (linear) and  $1-p_\mathrm{AU}=0.95$ with $k=3$ (quadratic).
The large $-\hat\gamma$ value for id = 67 indicates that the region $H^c$ is small; suggests small radius $r \doteq m/(-2\gamma)$ if it were a sphere in $\mathbb{R}^{m+1}$.
The last case is beyond the large sample theory of Section~\ref{sec:LST}, and it requests the need for the other asymptotic theory of Section~\ref{sec:NFT}.

\subsection{Simulation of convex and concave regions} \label{sec:convex-concave-simulation}

\subsubsection{Two dimensional examples}

%concave
\begin{table}
\caption{Concave hypothesis regions: Rejection probabilities in percent at significance level $\alpha=0.1$ 
and the average absolute bias defined by (\ref{eq:mean-absolute-bias})
with selection probabilities at the bottom (best two values in the sense of unbiasedness are in bold)}
\label{table:concave}
\scalebox{0.92}{
\begin{tabular}{l|rrrrrrrr|r}
\hline
Smooth            & $\theta=0.0$  & $0.5$   &  $1.0$  &   $1.5$ &   $2.0$ &   $2.5$ &   $3.0$ &   $3.5$ & Bias\\
\hline
BP          		& 13.32		& 13.66		& 14.57		& 23.57 		& 16.96		& 17.96		& 18.68 		& 19.15	& 6.26\\
AU ($k = 3$)     & 21.36  		& 21.44    		& 21.57    		& 21.50		& 21.18  		& 20.75		& 20.39  		& 20.17	& 10.90\\
\hdashline
2BP          		& 5.96    		&   6.15    		& 6.68    		&  7.39   		&   8.11    		& 8.73    		& 9.18    		&  9.47   		& 2.30\\
2AU ($k = 2$)     & $\bm{9.75}$ &$\bm{9.91}$  	& $\bm{10.26}$ & 10.60 		& 10.76 		& 10.72 		& 10.56 		& 10.38 		& 0.47\\
2AU ($k = 3$)     &$\bm{10.49}$ &$\bm{10.58}$ 	& 10.74 		& 10.79 		& 10.67 		& 10.43 		& $\bm{10.22}$&$\bm{10.08}$ & 0.53\\
SDBP 		& 8.70		& 8.87		& 9.29		& 9.76		& $\bm{10.11}$	& $\bm{10.28}$& 10.30		& 10.24		& 0.51\\
SI ($k = 2$)      & 8.87    		& 9.03    		& 9.44    		& $\bm{9.87}$ 	& 10.18		& 10.29 		& 10.27 		& 10.19 		& $\bm{0.43}$\\
SI ($k = 3$)      & 9.33    		& 9.45    		& $\bm{9.73}$ 	& $\bm{9.99}$ 	& $\bm{10.10}$ & $\bm{10.09}$ &$\bm{10.03}$ & $\bm{9.99}$ & $\bm{0.20}$\\
\hdashline
$P(Y\in S\mid\mu)$    & 44.54 &   45.00   & 46.09 &   47.28 &   48.24   & 48.89 &  49.29  & 49.53 & -\\
\hline \hline
Nonsmooth     & $\theta=0.0$  & $0.5$ & $1.0$ & $1.5$ & $2.0$ & $2.5$ & $3.0$ & $3.5$ & Bias\\
\hline
BP          		& $\bm{6.88}$	& $\bm{9.73}$	& 12.89		& 15.80 		& 17.93		& 19.16		& 19.72 		& 19.92	& 6.05\\
AU ($k = 3$)     & 17.14  		& 20.65  		& 22.51    		& 22.41		& 21.22 		& 20.15		& 19.72		& 19.73	& 10.56\\
\hdashline
2BP          		& 2.53   		 &   3.93    	&   5.62   		 & 7.3   		&  8.61   		& 9.41    		&   9.80    		& $\bm{9.94}$ 	& 2.73\\
2AU ($k = 2$)   & 5.86	 	& 8.11		& $\bm{10.11}$  & 11.3    		& 11.43 		& 11.01 		& 10.52 		& 10.21 		& 1.21\\
2AU ($k = 3$)   & $\bm{7.12}$  & $\bm{9.36}$	& 10.90 		& 11.3    		& 10.79 		& $\bm{10.16}$ & $\bm{9.86}$  & 9.84    		& $\bm{0.75}$\\
SDBP 		& 4.19		& 6.08		& 8.04		& 9.59		& $\bm{10.40}$	& 10.57		& 10.42		& 10.23		& 1.52\\
SI ($k = 2$)      & 4.94    		& 6.99    		& 8.96    		& $\bm{10.3}$ 	& 10.77		& 10.65 		& 10.37 		& 10.15 		& 1.24\\
SI ($k = 3$)      & 5.50   		& 7.55    		& $\bm{9.26}$	& $\bm{10.1}$ 	& $\bm{10.24}$ & $\bm{10.03}$& $\bm{9.89}$	& $\bm{9.89}$ 	& $\bm{0.85}$\\
\hdashline
$P(Y\in S\mid\mu)$    & 33.33 & 41.83 & 46.87 & 49.10 & 49.80 & 49.97 & 50.00 & 50.00 & -\\
\hline
\end{tabular}
}
\end{table}

\begin{figure}
 \begin{minipage}{0.49\textwidth}
  \begin{center}
   \includegraphics[width=0.9\textwidth]{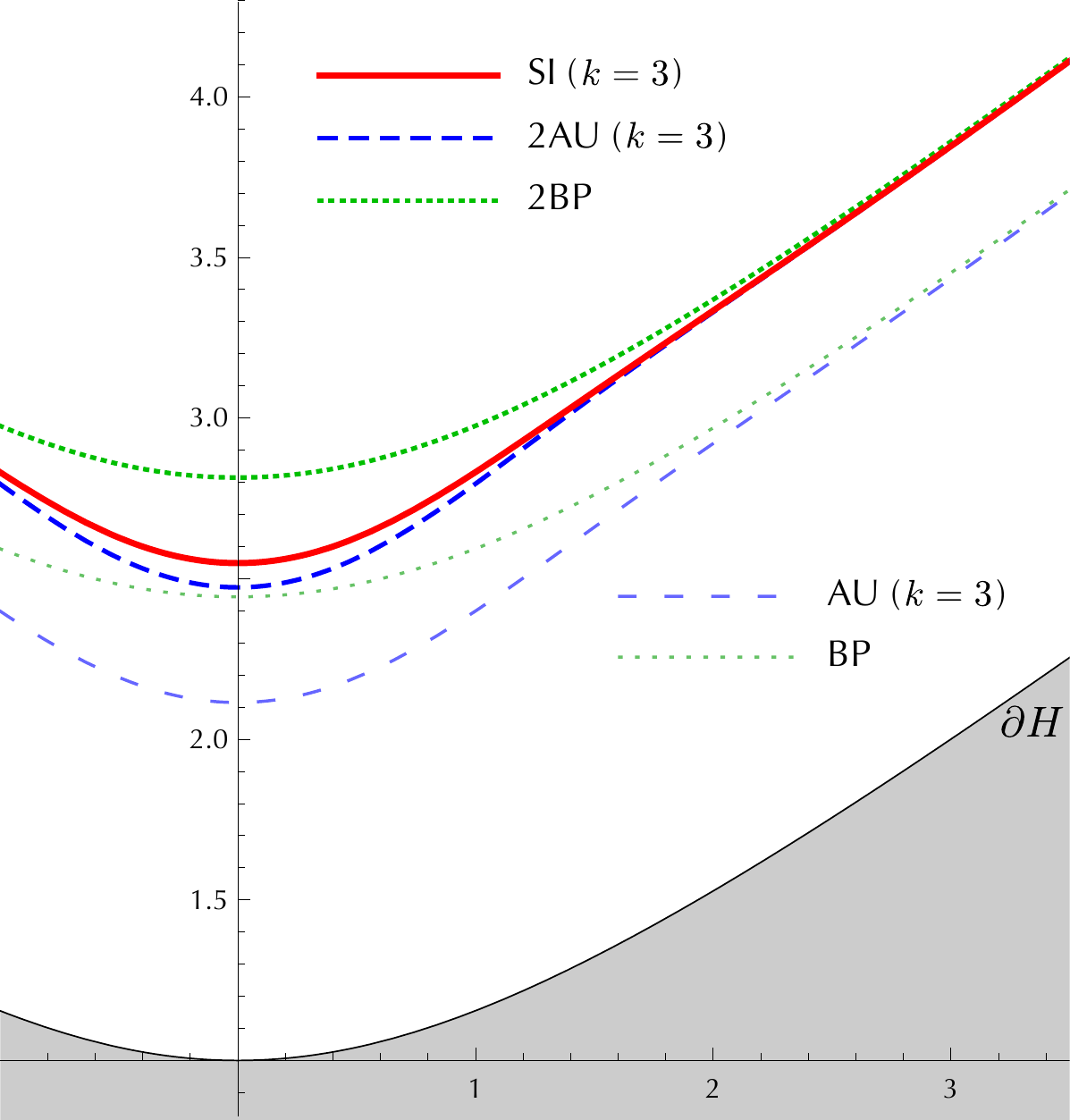}
   \hspace{1.6cm} (a) Smooth case : $a=1$
 \end{center}
 \end{minipage}
 \begin{minipage}{0.49\textwidth}
  \begin{center}
   \includegraphics[width=0.9\textwidth]{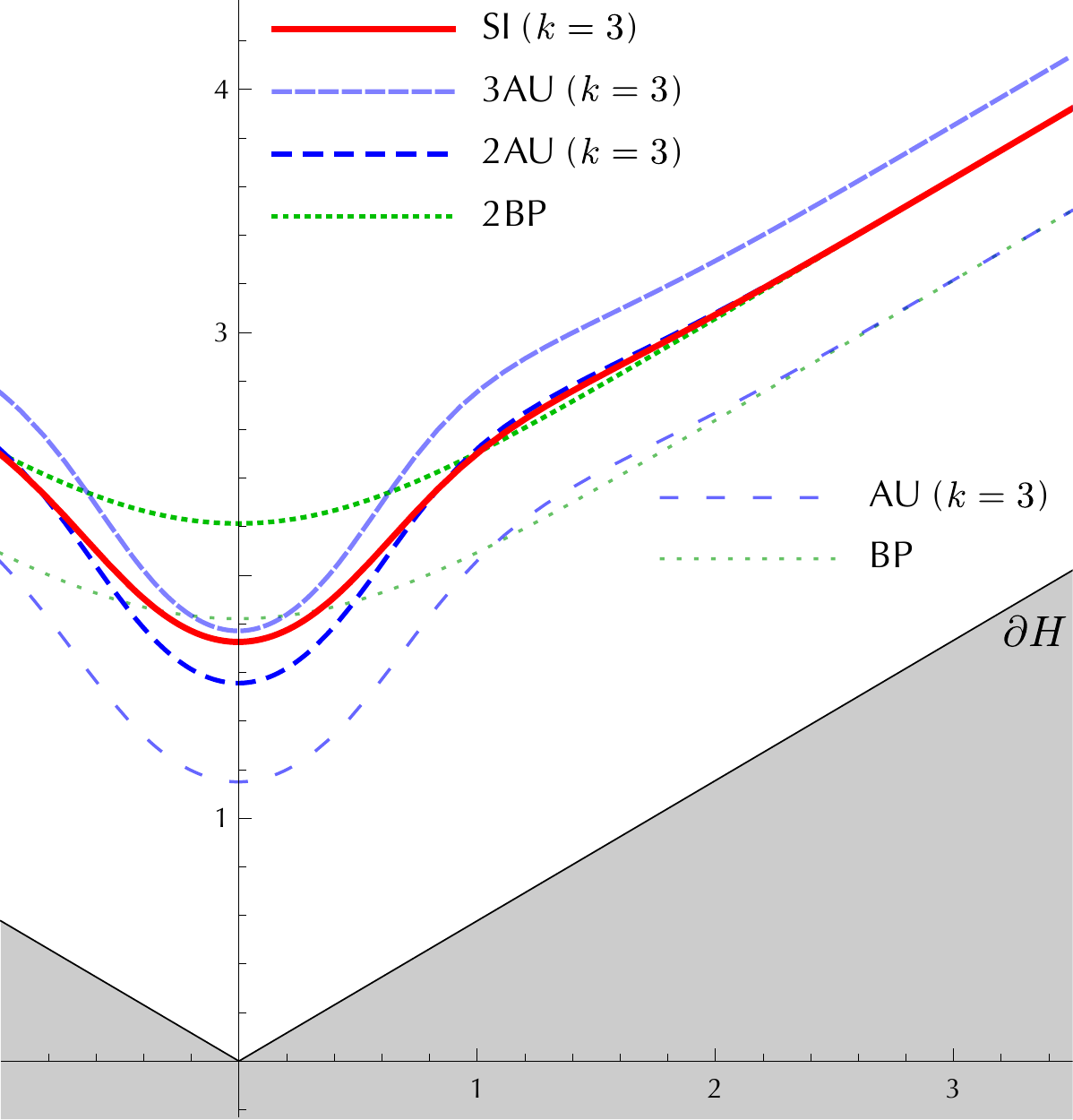}
   \hspace{1.6cm} (b) Nonsmooth case : $a=0$
 \end{center}
 \end{minipage}
   \caption{Concave hypothesis regions : Contour lines of $p$-values with $\alpha=0.1$.}
   % SI ($k=3$) : $p_{\mathrm{SI},3}(y)=\alpha$ (a solid line).
   % 3AU ($k=3$) : $3p_{\mathrm{AU},3}(y)=\alpha$  (a densely dashed line).
   % 2AU ($k=3$) : $2p_{\mathrm{AU},3}(y)=\alpha$ (a dashed line).
   % 2BP : $2p_\mathrm{BP}=\alpha$ (a dotted line).
   % AU ($k=3$) : $p_{\mathrm{AU},3}(y)=\alpha$ (a loosely dashed line).
   % BP ($k=3$) : $p_\mathrm{BP}=\alpha$ (a loosely dotted line).}
  \label{fig:sim_concave}
\end{figure}

\begin{figure}
 \begin{minipage}{0.49\textwidth}
  \begin{center}
   \includegraphics[width=0.9\textwidth]{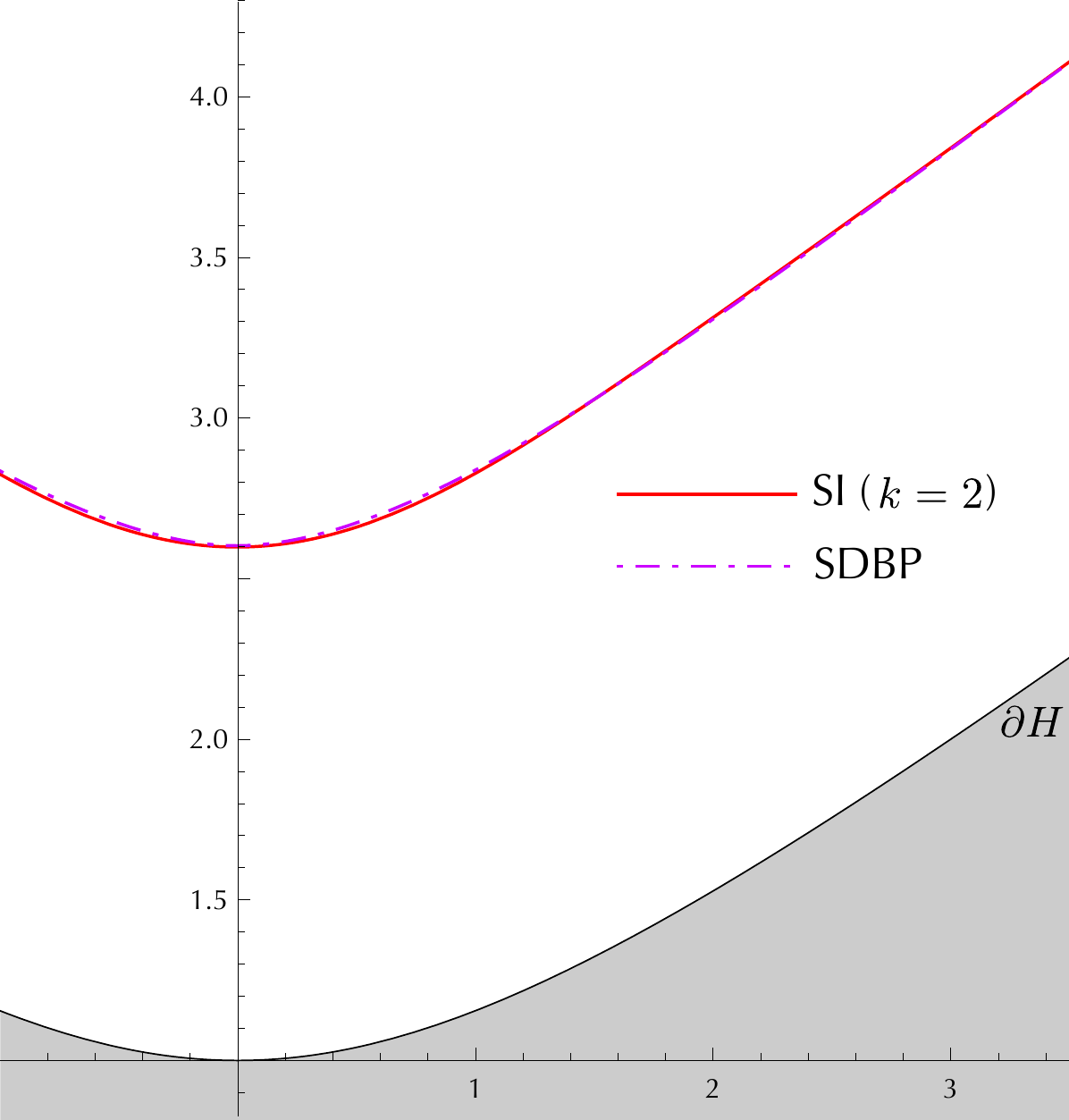}
   \hspace{1.6cm} (a) Smooth case : $a=1$
 \end{center}
 \end{minipage}
 \begin{minipage}{0.49\textwidth}
  \begin{center}
   \includegraphics[width=0.9\textwidth]{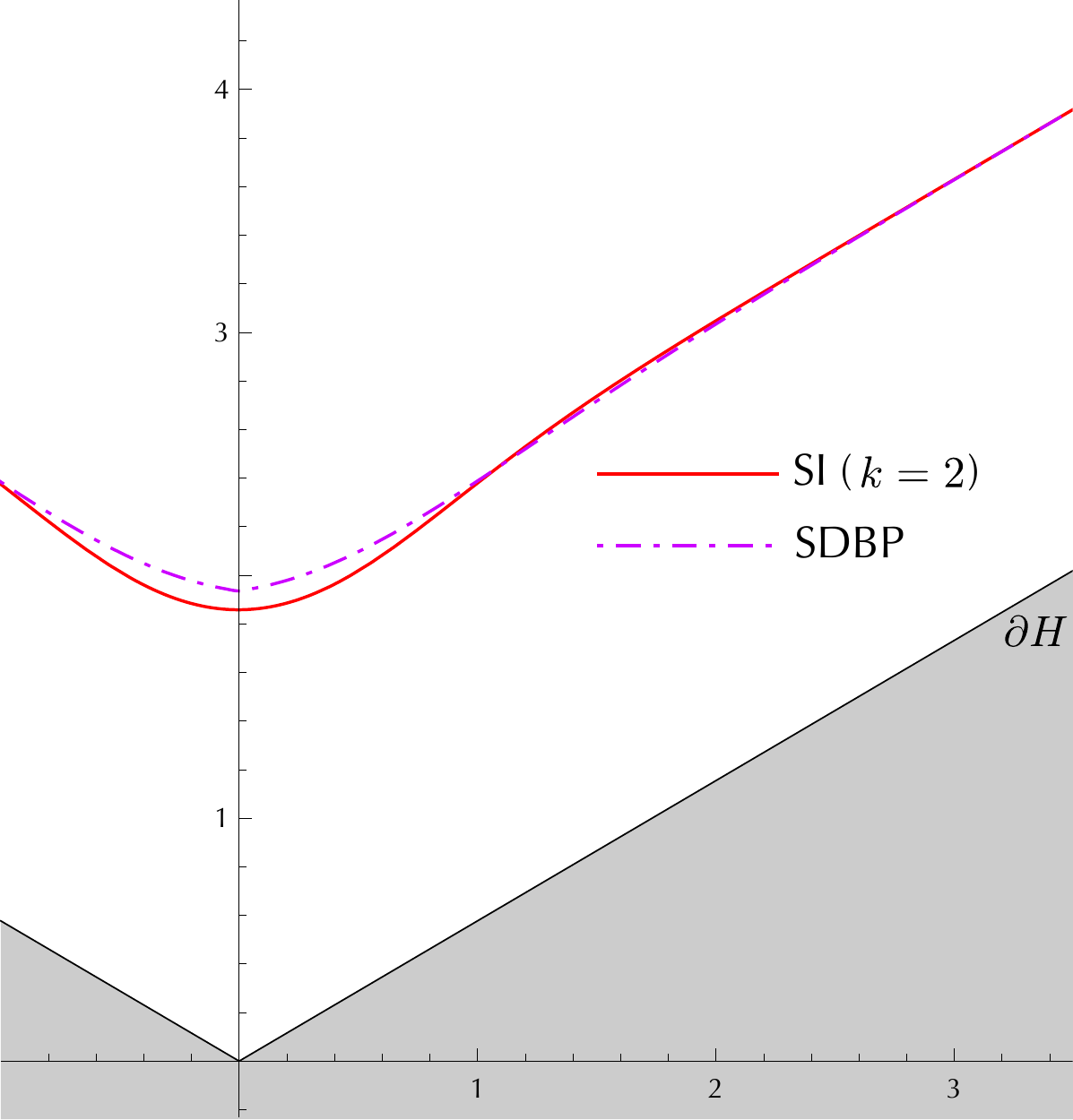}
   \hspace{1.6cm} (b) Nonsmooth case : $a=0$
 \end{center}
 \end{minipage}
   \caption{Multiscale bootstrap (SI) and double bootstrap (SDBP): Contour lines of $p$-values with $\alpha=0.1$.}
   % Relationship between SI ($k=2$) and SDBP : 
   %  SI ($k=2$) : $p_{\mathrm{SI},2}(y)=\alpha$ (a solid line).
   %  SDBP : $p_{\mathrm{BP},2}(y)=\alpha$  (a dashed-dotted line).}
  \label{fig:sim_concave_sdbp}
\end{figure}

Here, we verify that our method provides approximately unbiased selective inference through numerical simulations.
We consider the following hypothesis region in $\mathbb{R}^2$ as
\ba
h(u)= \pm\sqrt{a+u^2/3},\quad H=\{(u,v)\mid v \le -h(u)\},
\ea
and choose the selective region as $S=H^c$.
We consider four settings: the sign of function $h$ determines whether the hypothesis region is convex or concave, and
the value of $a$ determines the shape of the boundary surface $\partial H$ as smooth for $a=1$ or nonsmooth for $a=0$.
Here, we show only the results of concave cases and refer the reader to Section~\ref{sec:convex-concave-simulation-details} (supplementary material) for the convex cases.
As mentioned in Section~\ref{sec:motivation}, 
$2p(y)$ with non-selective $p$-values can be interpreted as naive selective $p$-values
if $\partial H$ is flat.
Thus, we compare our method with naive selective inferences 
using $2p(y)$ %, or equivalently setting significance level $\alpha'=\alpha/2$, 
for fair comparisons.
For the naive selective $p$-values, the rejection probability will be doubled if we use $p(y)$ instead.

We refer to the non-selective test with existing $p$-values $p_\mathrm{BP}$ and $p_{\mathrm{AU},k}$ as ``BP'' and ``AU ($k$)'', respectively.
We refer to the selective test with $2p_\mathrm{BP}$ as ``2BP'',
and that with $2p_{\mathrm{AU},k}$ as ``2AU ($k$)'',
where $p_{\mathrm{AU},k}$ \citep{Shimodaira08} is the non-selective version of $p_{\mathrm{SI},k}$.
The selective test with $p_{\mathrm{BP},2}$ in Section~\ref{sec:iterated-bootstrap} and $p_{\mathrm{SI},k}$ in (B) of Algorithm~\ref{alg:simbp} 
are denoted as ``SDBP'' and ``SI ($k$)'', respectively.
The bias of SI ($k$) is expected to reduce as $k$ increases.
By Theorem~\ref{theorem:double-bootstrap}, SI ($k=2$) and SDBP should exhibit the same behavior at least for smooth cases.
All results of the following simulations are computed accurately 
by numerical integration instead of Monte-Carlo simulation in order to avoid the effects of sampling error.

Table~\ref{table:concave} shows the selective rejection probabilities at significance level $\alpha=0.1$ 
and the selection probabilities for several $\mu=(\theta,-h(\theta))\in \partial H$, 
where we chose $\theta=0.0,0.5,\dots,3.5$.
The last column shows the average absolute bias for $p(y)$ computed by
\banum\label{eq:mean-absolute-bias}
\mathrm{Bias}(p)=\frac{1}{M+1} \sum_{j=0}^M \left| \frac{P(p(Y)<\alpha \mid \mu_j)}{P(Y\in S \mid \mu_j)}  - \alpha\right|,
\eanum
where $\theta_j$ ranges from 0 to 3.5 as $\theta_j = 0.05 \times j$, $j=0,\ldots,M$ with $M=70$.
%$\alpha' = \alpha$ for SI and $\alpha' = \alpha/2$ for BP and AU.
From the table,
we can see that the non-selective $p$-values induce serious bias in the context of selective inference.
Moreover, our method dominates the naive selective inferences in the sense of the unbiasedness in many cases,
and $k=3$ is better than $k=2$ for SI($k$).
In fact, for almost all points of $\theta$, selective rejection probabilities of our method are closer to significance level $\alpha=10\%$ than the naive inferences.
The average absolute bias (\ref{eq:mean-absolute-bias}) for our method is smaller than those for the naive selective inferences.
In addition, for each case, SI ($k=2$) and SDBP provide similar selective rejection probabilities in Table~\ref{table:concave} 
and similar rejection boundaries in Fig.~\ref{fig:sim_concave_sdbp}.
In accordance with Theorem~\ref{theorem:double-bootstrap}, the rejection surfaces of SI ($k=2$) and SDBP are almost the same in the smooth case.

For the concave hypothesis region with the nonsmooth boundary surface, 
Table~\ref{table:concave} shows that it is difficult to provide unbiased inferences in a neighborhood of the vertex.
Nevertheless, by using our method, the bias can be reduced more effectively with distance from the vertex.
%%%
%Here, it is worth noting that, for a concave hypothesis region with a nonsmooth surface, 
%a rejection probability of AU test is prone to take a small values in a neighborhood of the vertex 
%whereas the selection probability in such neighborhood is much less than 0.5.
%Thus, the underestimation behavior of the AU test is canceled by the overestimation effect due to small selective %probabilities in a neighborhood of the vertex.
%This superficial unbiasedness of the naive selective inference using $2p_{\mathrm{AU},k}(y)$ will disappear with increasing distance from the vertex.

Next, we look at contour lines of $p$-values in Fig.~\ref{fig:sim_concave} with the horizontal axis $\theta$.
We chose $\alpha=0.1$ again.
The shaded area represents the hypothesis region $H$, and the rejection regions are just above the contour lines in the subfigures.
For $p_{\mathrm{AU},k}$ and $p_{\mathrm{SI},k}$, we fixed $k=3$.
In all the settings, the three curves of 
$2p_{\mathrm{BP}}=\alpha$, $2p_{\mathrm{AU},3}=\alpha$ and $p_{\mathrm{SI},3}=\alpha$ coincide with each other at large $\theta$ values where $\partial H$ is flat.
This verifies that the use of $2p(y)$ of non-selective $p$-values leads to selective inference there.
Looking at $P(Y\in S|\mu)$ in Table~\ref{table:concave} at large $\theta$ values,
we confirm that the selection probabilities are actually 1/2.
However, the selection probability decreases as $\theta$ approaches zero in the concave cases. 
It is $1/3$ at the vertex for the nonsmooth concave case.
Then the curve of  $3p_{\mathrm{AU},3}=\alpha$ in Fig.~\ref{fig:sim_concave} almost touches the curve of $p_{\mathrm{SI},3}=\alpha$ near the vertex.
This shows that our selective inference method automatically adjusts the selection probability to provide a valid selective inference.

\subsubsection{Spherical examples}

Here, we consider a simple example which is considered in Example~1 of \cite{Efron:Tibshirani:1998:PR}.
Suppose that $H=\{\mu \mid \|\mu\|\ge \theta\}$ and $S=H^c$ as a concave hypothesis region.
That is, we consider the case that the selective region is a sphere of radius $\theta$ in $\Rb^{m+1}$.
The mean curvature of $\partial H$ defined in Section~\ref{sec:scaling-law} is given by $-\gamma=m/(2\theta)$.
For the fixed mean curvature, the number of dimensions $m+1$ was varied from $10$ to $1000$.
We chose $-\gamma=0.5,1.0,1.5$.
In this setting, by Theorem~1 of \cite{Shimodaira2014higherorder}, 
the third order term in the asymptotic expansion of the bootstrap probability goes to $0$ as $m\rightarrow \infty$.
Thus, from Theorem~\ref{theorem:au-pvalue}, 
we expect that the selective rejection probability of SI ($k$) goes to $\alpha$ as $m\rightarrow \infty$.
In addition, from the discussion in Section~\ref{sec:pvalue-si-LST},
it is expected that the selective rejection probabilities of 2BP and 2AU ($k$) go to $\Phi(\bar\Phi^{-1}(\alpha) - 2 \gamma)/\bar\Phi(- \gamma)$
and $\alpha/\bar\Phi(- \gamma)$, respectively.

Fig.~\ref{fig:sim_sphere_concave} illustrates
the change of the selective rejection probability for each $p$-value as the number of dimensions increases.
We can see that 2BP and 2AU ($k$) have serious bias related to the magnitude of mean curvature.
On the other hand, the selective rejection probabilities of SI ($k$) approach to $\alpha=10\%$, 
regardless of the mean curvature, as the number of dimensions increases.
Thus, when the number of dimensions is relatively large, 
our selective inference could be nearly unbiased whereas the naive selective inference using 2BP or 2AU may have serious bias.

\begin{figure}
 \begin{minipage}{0.32\textwidth}
  \centering
   \includegraphics[width=\textwidth]{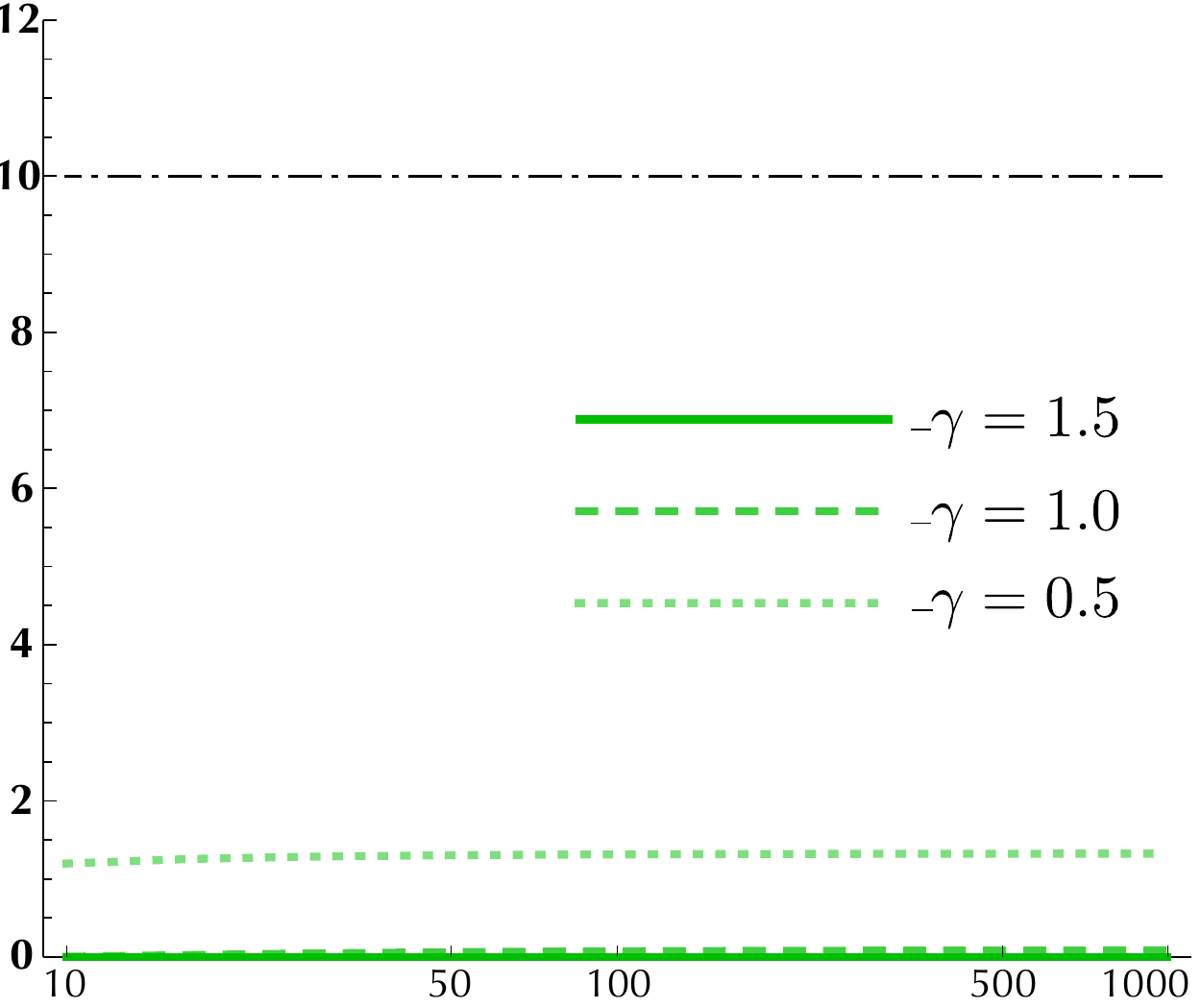}
   \subcaption{2BP}
 \end{minipage}
 \begin{minipage}{0.32\textwidth}
  \centering
   \includegraphics[width=\textwidth]{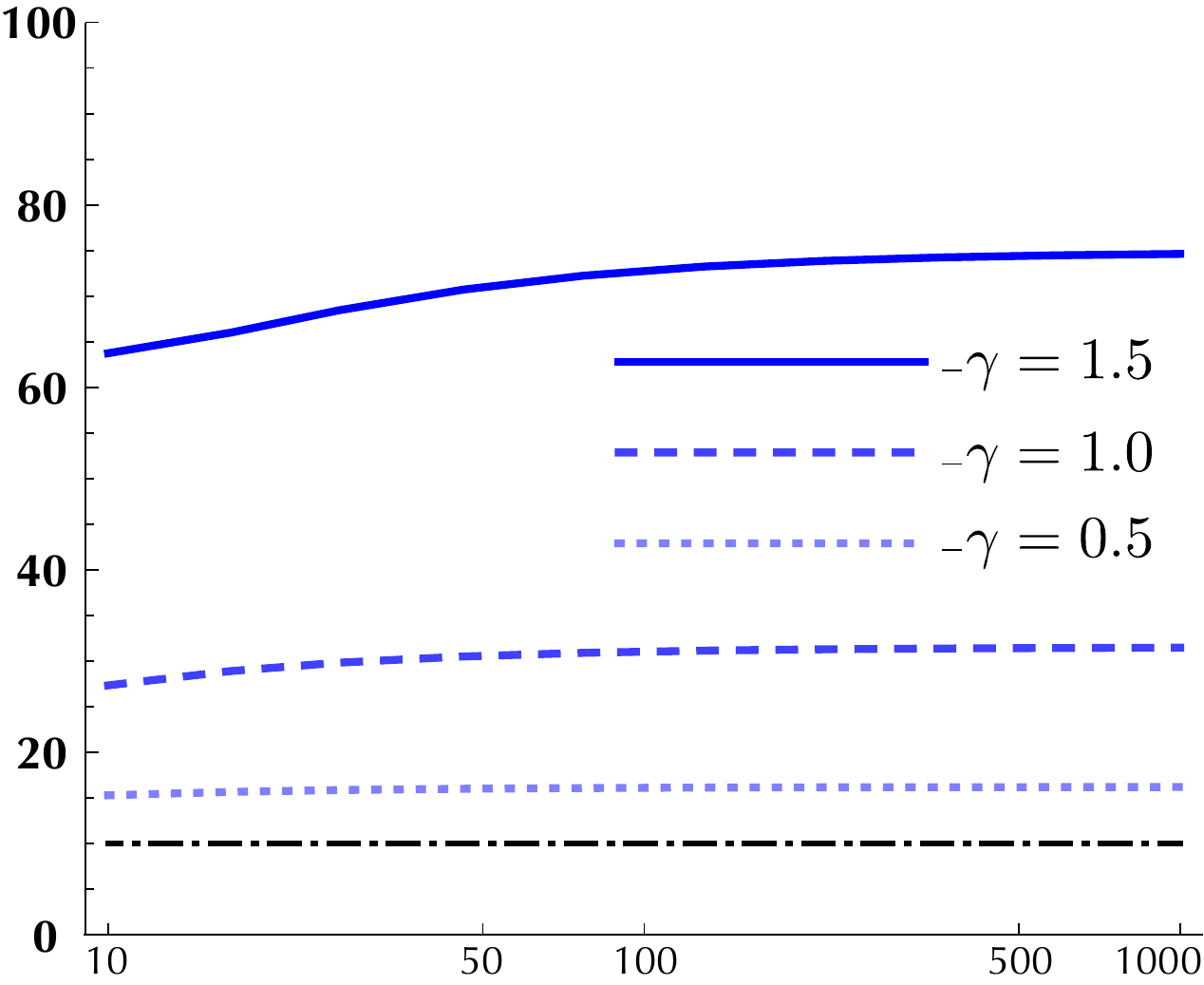}
   \subcaption{2AU ($k=3$)}
 \end{minipage}
  \begin{minipage}{0.32\textwidth}
  \centering
   \includegraphics[width=\textwidth]{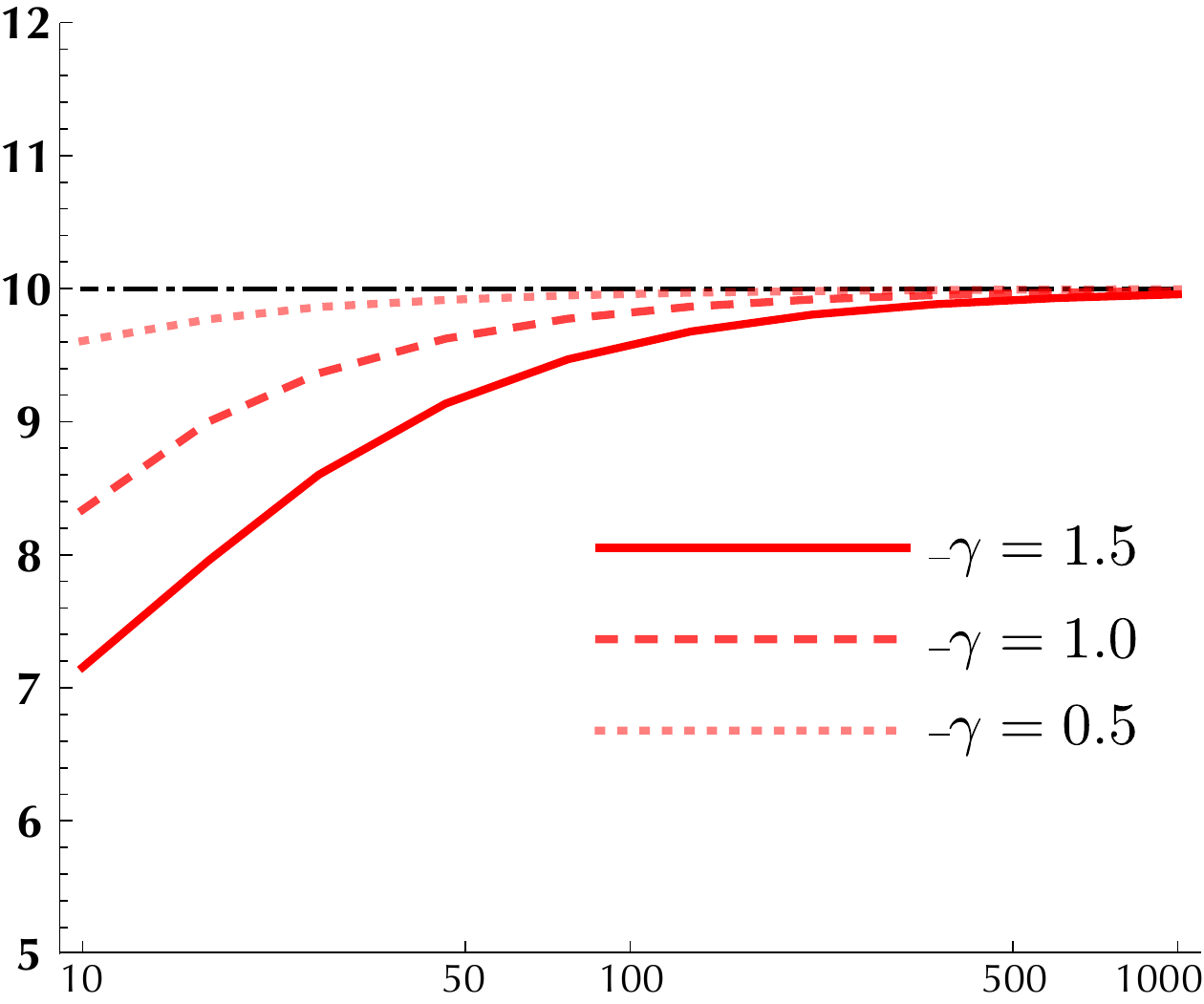}
   \subcaption{SI ($k=3$)}
    \end{minipage}
   \caption{Concave hypothesis regions : selective rejection probabilities as a function of the number of dimensions $m+1$. 
   The horizontal axis is the number of dimensions $m+1$ and the vertical axis is 
   the selective rejection probability in percent at significance level $\alpha=0.1$.}
   % ($-\gamma=0.5$ : a dotted line, $-\gamma=1.0$ : a broken line, $-\gamma=1.5$ : a solid line).
   % The dashed-dotted lines is the line of the ideal unbiased selective test of which selective rejection probability is always equal to the significance level $\alpha=0.1$.
   %}
  \label{fig:sim_sphere_concave}
\end{figure}

\subsection{Simulation analysis of pvclust} \label{sec:pvclust-simulation}

Here, we provide a numerical simulation of pvclust in accordance with Section~\ref{sec:pvclust-sampling} (supplementary material).
The average linkage hierarchical clustering with the (normalized) Euclidean distance $\frac{1}{n}\sum_{t=1}^n (x_{ti} - x_{tj})^2$ 
is considered as a tree building algorithm ${\tt hclust}$.
We denote by $\{i,j\}$ the cluster consisting of tissues $i$ and $j$.
We consider the simple setting in which
there are three tissues and $x_i\;(i=1,\dots,1000)$ are independent observations from the normal mixture 
\[
0.5N_3(\mu_1(a),I_3)+0.5(\mu_2(a),I_3),
\]
where $a\in \mathbb{R}$, $\mu_1(a)=(a,a,0)$, and $\mu_2(a)=(a,0,a)$.
In this setting, the ``true" distance matrix $d_\infty$ is given by 
\[
d_{\infty}=
\begin{bmatrix}
\text{-} &  a^2/2 + 2 & a^2/2 + 2 \\
\text{-} & \text{-} & a^2 + 2	\\
\text{-} & \text{-} & \text{-}
\end{bmatrix}.
\]
Thus, both clusters $\{1,2\}$ and $\{1,3\}$ are true. It is worth noting that the cluster $\{2,3\}$ is also true in the case that $a=0$.
We generated independently $10^4$ datasets of $\mathcal{X}_{1000}$ at each value of $a$ and applied pvclust for each dataset.
We choose values of $a$ such that the selection probabilities of the cluster $\{1,2\}$ (or $\{1,3\}$) at $a$ are 
almost equivalent to ones at $\theta=0,0.5,\dots,3.5$ in the nonsmooth case of Table~\ref{table:concave}.
Let $\hat{\theta}(a)$ be an estimated such transform from $a$ to $\theta$.
Obviously,  $a=0$ is corresponding to $\theta=0$.

For the details about the construction of $\hat{\theta}$, see Section~\ref{sec:pvclust-simulation-details} (supplementary material).
When we obtained the cluster $\{1,2\}$ (or $\{1,3\}$), 
we computed the following $p$-values for the null hypothesis that the cluster $\{1,2\}$ (or $\{1,3\}$) is not true: $p_{\mathrm{BP}}(H|y)$, $p_{\mathrm{AU},3}(H|y)$, 
$2p_{\mathrm{BP}}(H|y)$, $2p_{\mathrm{AU},3}(H|y)$, and $p_{\mathrm{SI},3}(H|S,y)$ by $B=10^4$ bootstrap replicates.
Note that $S$ is the selective region in which the cluster $\{1,2\}$ (or $\{1,3\}$) is true.
For $i=2,3$, let $H_i$ be the null hypothesis that the cluster $\{1,i\}$ is not true.
In pvclust, we consider the test for the null hypothesis $H_i$ only when the cluster $\{1,i\}$ appeared.
We also refer to tests at the significance level of $\alpha=0.1$ with these $p$-values as the same symbols in Section~\ref{sec:convex-concave-simulation}.
We count how many times, say $N_i$, the cluster $\{1,i\}$ appears in $10^4$ replications.
For each test, we also count how many times, say $R_i$, the null hypothesis $H_i$ is rejected and 
the selective rejection probability is estimated by $R_i/N_i$.
This scenario seems to correspond with the nonsmooth and concave setting in Section~\ref{sec:convex-concave-simulation} (see, Section~\ref{sec:pvclust-simulation-details}).
The subfigure (a) of Fig.~\ref{fig:sim_pvclust} shows the selective rejection probabilities of the nonsmooth case of Table~\ref{table:concave} related with $\theta$.
The subfigures (b,c) of Fig.~\ref{fig:sim_pvclust} show the selective rejection probabilities of tests against $H_2$ and $H_3$ related with $\hat{\theta}(a)$, respectively.
The shaded area around each line in the subfigures (b,c) of Fig.~\ref{fig:sim_pvclust} indicates the precision of plus minus two standard deviations.
From these results, we can see similar behaviors to the two-dimensional example in the simulation results of pvclust.
By using our method, the bias can be reduced more effectively in the practical situation of pvclust.

\begin{figure}
 \begin{minipage}{0.32\textwidth}
  \centering
   \includegraphics[width=\textwidth]{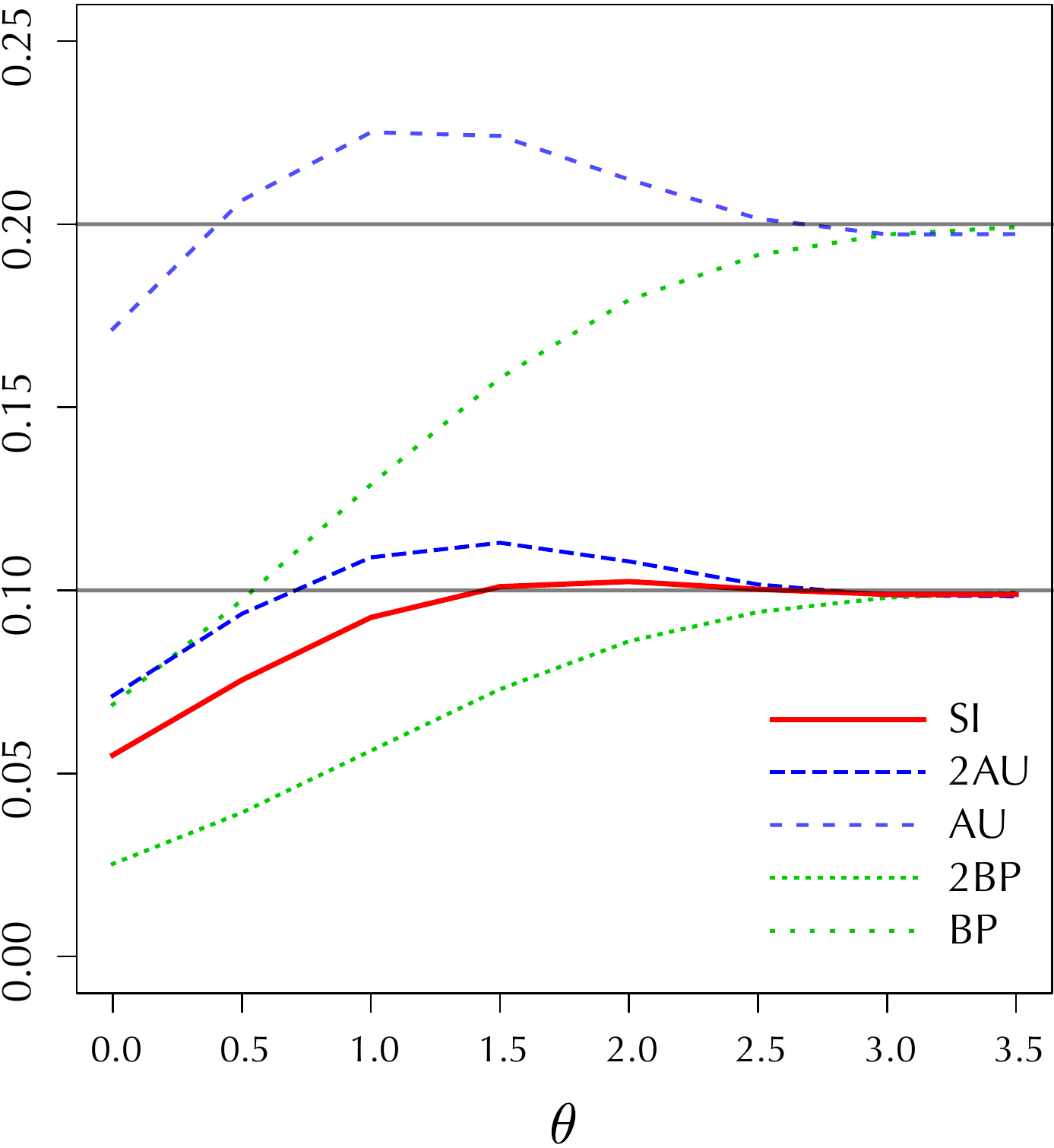}
   (a)%\subcaption{2-dimensional example}
 \end{minipage}
 \begin{minipage}{0.32\textwidth}
  \centering
   \includegraphics[width=\textwidth]{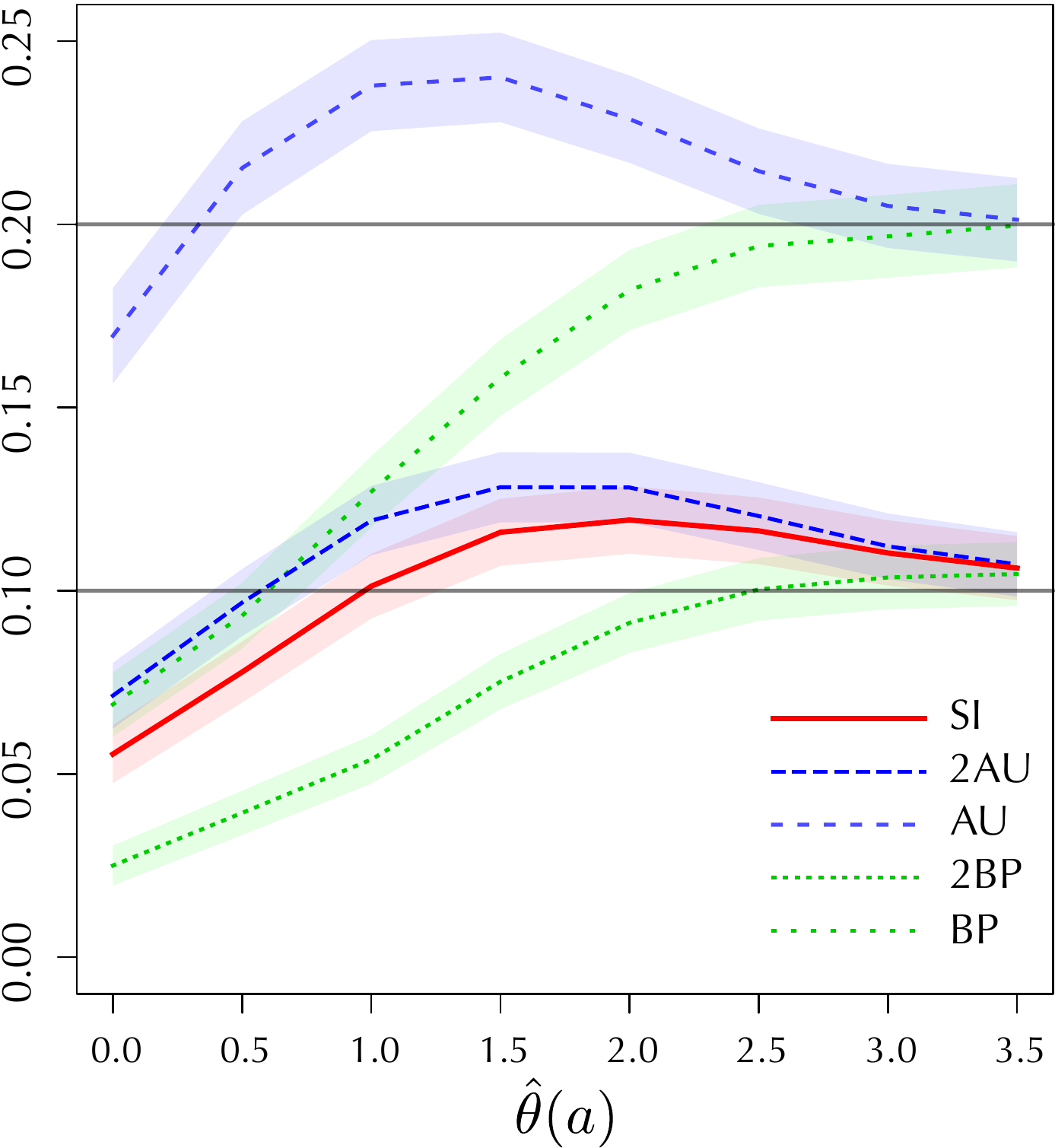}
   (b)%\subcaption{Cluster $\{1,2\}$}
 \end{minipage}
  \begin{minipage}{0.32\textwidth}
  \centering
   \includegraphics[width=\textwidth]{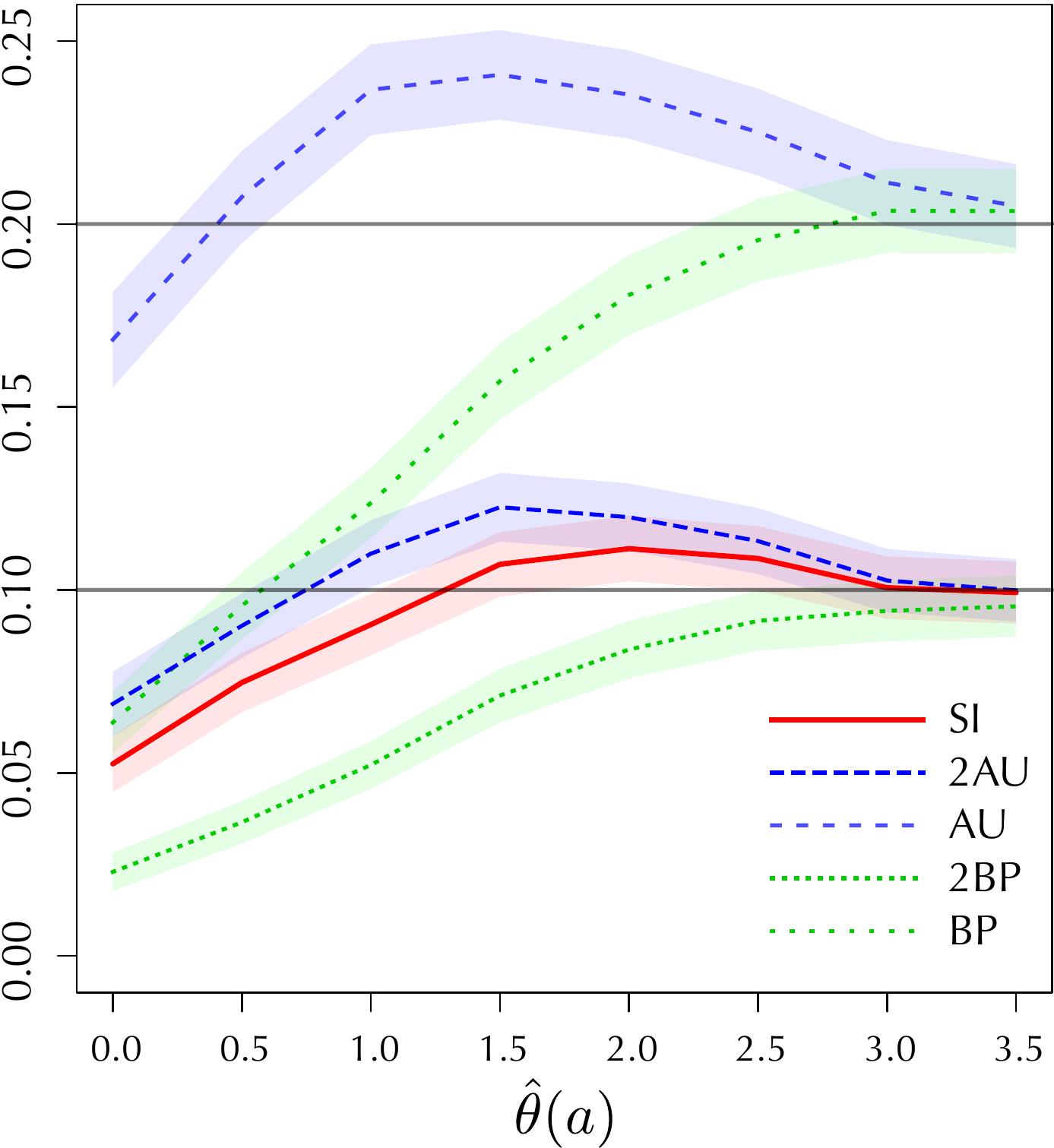}
   (c)%\subcaption{Cluster $\{1,3\}$}
    \end{minipage}
   \caption{Selective rejection probabilities as a function of $\theta$ or $\hat{\theta}(a)$.
   (a) The null hypothesis is the concave region with the nonsmooth boundary in Section~\ref{sec:convex-concave-simulation}.
   (b) The null hypothesis is $H_2$.
   (c) The null hypothesis is $H_3$.
   The horizontal axis of (a) is $\theta$ described in Section~\ref{sec:convex-concave-simulation}.
   The horizontal axes of (b), (c) are $\hat{\theta}(a)$.
   In each subfigure, the vertical axis is the selective rejection probability at significance level $\alpha=0.1$.}
   % SI ($k=3$) : $p_{\mathrm{SI},3}(y)=\alpha$ (a solid line).
   % 2AU ($k=3$) : $2p_{\mathrm{AU},3}(y)=\alpha$ (a dashed line).
   % 2BP : $2p_\mathrm{BP}(y)=\alpha$ (a dotted line).
   % AU ($k=3$) : $p_{\mathrm{AU},3}(y)=\alpha$ (a loosely dashed line).
   % BP ($k=3$) : $p_\mathrm{BP}=\alpha$ (a loosely dotted line)..
   % }
  \label{fig:sim_pvclust}
\end{figure}
%%%------------------------------------------------------------------------------------------------------------------------------
%%
%

%
%%
%%%------------------------------------------------------------------------------------------------------------------------------
\section{Large sample theory for smooth boundary surfaces}\label{sec:LST}

\subsection{Nearly parallel surfaces} \label{sec:nearly-parallel}

In this section, we consider the ordinary asymptotic theory of large $n$ by assuming that $\partial H$ and $\partial S$ are smooth surfaces.
We follow the geometric argument given for the problem of regions with multivariate normal model 
\citep{Efron:1985:BCI,Efron:Tibshirani:1998:PR},
and its extension to multiscale bootstrap \citep{Shimodaira:2004:AUT,Shimodaira2014higherorder}.
Here we introduce a new assumption for solving the selective inference.

For representing $y,\mu \in \Rb^{m+1}$ in a neighborhood of $\partial H$, we employ the coordinate system $(u,v)$
with $u\in\mathbb{R}^m, v\in\mathbb{R}$, 
and consider the hypothesis region $H=\Rc(h)$ for a smooth function $h(u)$ represented as
\[
	\Rc(h) = \{(u, v) \mid v \le -h(u), u\in \mathbb{R}^m \}.
\]
The selective region is defined similarly as $S=\Rc(s)^c$ for a smooth function $s(u)$.
Using the summation convention that an index appearing twice in a term implies summation over $1,\ldots,m$,
a smooth function $h$ is expressed as
\[
	h(u)  = h_0 + h_i u_i + h_{ij} u_i u_j + h_{ijk} u_i u_j u_k + \cdots,
\]
where $h_0= h(0), h_i=\partial h/\partial u_i|_0, h_{ij} = (1/2) \partial^2 h/\partial u_i \partial u_j |_0, \ldots$,  are the coefficients of the Taylor expansion at $u=0$.

In the large sample theory, each axis of $y=(u,v)$ is scaled by $\sqrt{n}$ as $Y = \sqrt{n} \bar X$ to keep the variance in (\ref{eq:model}) fixed.
Therefore the $k$-th derivatives of $h$ should be of order $O(n^{1/2}/ (n^{1/2})^k ) = O(n^{-(k-1)/2})$, giving
$h_{ij}=O(n^{-1/2})$, and higher order terms are $O(n^{-1})$. Thus we have
\[
	h(u) \doteq h_0 + h_i u_i + h_{ij} u_i u_j.
\]
%In the asymptotic theory, $p$-values are said to be $k$-th order accurate if it is correct up to terms of order $O(n^{-(k-1)/2})$ with error of order $O(n^{-k/2})$.
%Our computation here is second order accurate by computing up to $O(n^{-1/2})$ and ignore $O(n^{-1})$.
%The equality $\doteq$ is used again as in Section~\ref{sec:overview}.
In this paper, we consider a class of \emph{nearly parallel surfaces} $\Tc$ with the additional property that 
$h_0 = O(1)$ and $h_i=O(n^{-1/2})$ for all $h\in \Tc$ so that surfaces
defined by $\Bc(h) =  \{(u, v) \mid v = -h(u), u\in \mathbb{R}^m \} $ are nearly parallel to each other.
We then assume that $h,s \in \Tc$ for $\partial H=\Bc(h)$ and $\partial S=\Bc(s)$.
This setting is less restrictive than the class $h \in \Sc$ with $h_i=O(n^{-1})$
considered in \cite{Shimodaira2014higherorder} for representing non-selective rejection region $R=\Rc(r)^c$ with $r\in \Sc$.

Our motivation for introducing the class $\Tc$ is as follows.
Although we can set $h_0=0$, $h_i=0$, thus $h \in \Tc$, for any particular $h$ without losing generality by taking a point on surface as the origin, 
the $k$-th derivatives are $O(n^{-(k-1)/2})$ for another $s$ in general and thus $s_0 = O(\sqrt{n})$ and $s_i = O(1)$.
The first assumption for $s\in \Tc$, namely $s_0=O(1)$, comes from the \emph{local alternatives} setting,
where points, before applying the scaling of $\sqrt{n}$, are approaching zero at rate $O(1/\sqrt{n})$ so that
points in the space of $\mu$ are of order $O(\sqrt{n}/\!\sqrt{n}) = O(1)$.
The second assumption, namely $s_i = O(n^{-1/2})$, is newly introduced in this paper for solving the selective inference. It lets
$\partial s/\partial u \doteq \partial h/\partial u$ at $u=O(1)$ so that $\partial S$ is nearly parallel to $\partial H$ in the neighborhood of the origin. This assumption clearly holds for the case $S=H^c$, where $\partial S = \partial H$.

\subsection{The scaling law of the normalized bootstrap $z$-value} \label{sec:scaling-law}

The asymptotic expansion of the bootstrap probability \citep{Efron:Tibshirani:1998:PR} and its extension to multiscale bootstrap \citep{Shimodaira:2004:AUT} are obtained as follows.
By taking the origin at a point on $\partial H$, we can write
\[
	h(u) \doteq h_{ij} u_i u_j 
\]
with $h_0=0$, $h_i=0$.
We first work on the case that the observation is $y=(0,t)$, namely, $u=0\in\mathbb{R}^m$, $v=t\in\mathbb{R}$.
Then the signed distance from $y$ to $\partial H$ is $t$.
The mean curvature of $\partial H$ at the origin $(0,0)$ is defined as
\[
  \hat\gamma = h_{ii} = \sum_{i=1}^m h_{ii},
\]
which is half the trace of Hessian matrix.
The bootstrap probability $\alpha_{\sigma^2}(H | y)$ is, by noting $V^* |y \sim N(t,\sigma^2)$,
\[
P_{\sigma^2}(Y^\ast\in H\mid y) 
= P_{\sigma^2}(V^\ast \le -h(U^\ast)\mid y)
=E_{\sigma^2}\biggl[ \bar\Phi\biggl(\frac{t + h(U^\ast)}{\sigma}\biggr)\,\biggm|\, u=0 \biggr].
\]
We use the notation $E_{\sigma^2}(\cdot|y)$ for the expectation with respect to (\ref{eq:bpmodel}),
and $E_{\sigma^2}(\cdot | u)$, in particular, for the expectation with respect to 
$U^\ast\sim N_m(u,\sigma^2I_m)$.
We also interpret $E_{\sigma^2}$ as a operator to $h$, and use
the notation
\[
  E_{\sigma^2} h(u) = E_{\sigma^2}(h(U^*)\mid u).
\]
For calculating  $\alpha_{\sigma^2}(H | y)$, consider the Taylor expansion
\begin{equation} \label{eq:phi-taylor}
  \bar\Phi(x + \epsilon)=\bar\Phi(x) - \phi(x) \epsilon + O(\epsilon^2),
\end{equation}
and put $x=(t+E_{\sigma^2}h(0))/\sigma$, $\epsilon = (h(U^\ast)-E_{\sigma^2}h(0))/\sigma$.
$\phi(x)$ is the density function of $N(0,1)$.
Then we have $\alpha_{\sigma^2}(H | y)=E_{\sigma^2} (\bar\Phi(x+\epsilon)\mid u=0 ) = \bar\Phi(x) - \phi(x) E_{\sigma^2}(\epsilon|u=0) + O(n^{-1})$.
Since $E_{\sigma^2}(\epsilon\mid u=0)=0$ and $E_{\sigma^2}(h_{ij} U^*_i U^*_j\mid u=0)= h_{ij} \delta_{ij} \sigma^2 = h_{ii} \sigma^2$, we finally get
\begin{equation} \label{eq:bp-at-origin}
  \alpha_{\sigma^2}(H | y)
  \doteq \bar\Phi\biggl(\frac{t+E_{\sigma^2}h(0)}{\sigma}\biggr)
  \doteq \bar\Phi(t\sigma^{-1}+h_{ii}\sigma)
  = \bar\Phi(t\sigma^{-1}+\hat\gamma\sigma).
\end{equation}
Therefore, the normalized bootstrap $z$-value defined in (\ref{eq:psi-H}) is expressed as
\begin{equation} \label{eq:psi-at-origin}
  \psi_{\sigma^2}(H | y) \doteq t + \hat\gamma \sigma^2.
\end{equation}

Next, we work on the general case for any $h\in \Tc$ and $y\in\mathbb{R}^{m+1}$.
The expression for $\alpha_{\sigma^2}(H | y)$ is obtained by change of coordinates
with $\proj(H|y)$ being at the origin.
This has been done in \cite{Shimodaira2014higherorder} up to $O(n^{-2})$, meaning fourth order accuracy.
Here we need only the result with second order accuracy as shown in the following lemma.

\begin{lemma} \label{lemma:geometry-of-bp}
Let $H=\Rc(h)$ and $y=(\theta, -s(\theta))$ for any $h, s\in \Tc$ and $\theta\in \mathbb{R}^m$.
Then the bootstrap probability is expressed as
\begin{equation} \label{eq:bpsy}
	\alpha_{\sigma^2}(H | y) \doteq \bar\Phi(\hat\eta\sigma^{-1} + \hat\gamma \sigma),
\end{equation}
where $\hat \eta=\eta(H|y)$ is the signed distance from $y$ to $\partial H$ and $\hat\gamma=\gamma(H|y)$ is the mean curvature of $\partial H$ at $\proj(H|y)$.
These two geometric quantities are expressed by indicating the dependency on $h,s,\theta$ as
\begin{equation} \label{eq:hat-lambda}
	\hat\eta(h | s,\theta) \doteq h_0 - s_0 + (h_i - s_i) \theta_i + (h_{ij} - s_{ij}) \theta_i \theta_j
\end{equation}	
\begin{equation} \label{eq:hat-gamma}
		\hat\gamma(h | s, \theta) \doteq h_{ii}.
\end{equation}
We also denote $\psi_{\sigma^2}(H | y) = \psi_{\sigma^2}(h | s,\theta)$.
Then (\ref{eq:bpsy}) is expressed as
\begin{align}
\psi_{\sigma^2}(h | s,\theta) & \doteq \hat \eta(h | s,\theta) + \hat\gamma(h | s,\theta) \sigma^2 \nonumber\\
	 &\doteq h_0 - s_0 + (h_i - s_i) \theta_i + (h_{ij} - s_{ij}) \theta_i \theta_j + h_{ii} \sigma^2.\label{eq:psi-hsq}
\end{align}
\end{lemma}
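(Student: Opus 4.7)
The plan is to reduce the general case to the origin case already established in (\ref{eq:bp-at-origin}), via the invariance of $\alpha_{\sigma^2}(H|y)$ under rigid motions of $\Rb^{m+1}$. Since $Y^\ast | y \sim N_{m+1}(y,\sigma^2 I_{m+1})$ is rotation- and translation-equivariant, the bootstrap probability depends only on the intrinsic Euclidean configuration of $y$ relative to $H$. I would apply the rigid motion that sends $\hat\mu := \proj(H|y)$ to the origin and aligns the outward unit normal of $\partial H$ at $\hat\mu$ with the new $v$-axis, so that $y$ is brought to the point $(0, \hat\eta)$ and $\partial H$ is described by a new function $\tilde h$ with $\tilde h_0 = 0$, $\tilde h_i = 0$, and $\tilde h_{ii} = \hat\gamma$. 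Then (\ref{eq:bp-at-origin}) directly yields $\alpha_{\sigma^2}(H|y) \doteq \bar\Phi(\hat\eta \sigma^{-1} + \hat\gamma \sigma)$, which is (\ref{eq:bpsy}), and the expression (\ref{eq:psi-hsq}) follows from the definition (\ref{eq:psi-H}).

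It remains to evaluate $\hat\eta$ and $\hat\gamma$ in the original coordinates. For the signed distance, I parameterize $\partial H = \{(u, -h(u))\}$ and seek the foot of perpendicular $u^\ast$ from $y = (\theta, -s(\theta))$; minimizing $\|\theta - u\|^2 + (h(u) - s(\theta))^2$ gives the stationarity condition $\theta_i - u^\ast_i = (h(u^\ast) - s(\theta)) \cdot \partial h /\partial u_i |_{u^\ast}$. Under the nearly parallel assumption, $\partial h/\partial u_i = h_i + 2 h_{ij} u^\ast_j = O(n^{-1/2})$ and $h(u^\ast) - s(\theta) = O(1)$, so $u^\ast - \theta = O(n^{-1/2})$. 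Consequently the actual distance differs from the vertical gap $h(u^\ast) - s(\theta)$ only by a multiplicative factor $1 + O(n^{-1})$, and a Taylor expansion together with $h_{ijk}, s_{ijk} = O(n^{-1})$ yields $\hat\eta \doteq h(\theta) - s(\theta) = h_0 - s_0 + (h_i - s_i)\theta_i + (h_{ij} - s_{ij})\theta_i\theta_j$, with the correct sign since $y\not\in H$ iff $h(\theta) > s(\theta)$. For the mean curvature, the Hessian of $h$ at $u^\ast$ differs from that at the origin only through terms $h_{ijk}u^\ast_k = O(n^{-1})$, so $\hat\gamma \doteq h_{ii}$, which is (\ref{eq:hat-gamma}).

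The main obstacle is to verify that the $O(n^{-1})$ remainders from the three sources — the projection step, the Hessian evaluation, and the rigid change of coordinates — remain of order $O(n^{-1})$ after being composed with $\sigma^{-1}$ inside the argument of $\bar\Phi$; this is safe because $\sigma$ is treated as a fixed positive scale in the statement of the lemma. A secondary technical point is that the rotated defining function $\tilde h$ must itself belong to the class $\Tc$, so that (\ref{eq:bp-at-origin}) legitimately applies; this is preserved because the required rotation angle is of order $\|\nabla h(u^\ast)\| = O(n^{-1/2})$, which leaves the defining conditions $\tilde h_0 = O(1)$ and $\tilde h_i = O(n^{-1/2})$ intact to the required order.
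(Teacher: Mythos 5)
Your proposal is correct and follows essentially the same geometric reduction the paper uses: invoke invariance of the bootstrap probability under rigid motions to bring $\proj(H|y)$ to the origin with the normal aligned to the $v$-axis, appeal to (\ref{eq:bp-at-origin}), and then express $\hat\eta$ and $\hat\gamma$ in the original coordinates. The only organizational difference is that the paper packages the projection arithmetic as its change-of-coordinates Lemma~\ref{lemma:change-of-coordinates}, whereas you re-derive the same relations directly from the stationarity condition for the foot of perpendicular and then verify that the Hessian shift and the $O(n^{-1/2})$ rotation each contribute only $O(n^{-1})$ to the curvature.
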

\begin{proof}
Since $t$ and $\hat\gamma$ are geometric quantities which do not depend on the choice of coordinate system,
(\ref{eq:bp-at-origin}) gives (\ref{eq:bpsy}).
We will show (\ref{eq:hat-lambda}) and (\ref{eq:hat-gamma}) in Section~\ref{sec:proof-lemma-geometry-of-bp} (supplementary material).
Then (\ref{eq:psi-hsq}) is an immediate consequence of (\ref{eq:bpsy}).
\end{proof}

\subsection{Approximately unbiased $p$-value for selective inference} \label{sec:pvalue-si-LST}

The rejection region of approximately unbiased test for selective inference is given in the following theorem.
Only non-selective inference, i.e., the case of $S=\Rb^{m+1}$, has been discussed in the literature of the problem of regions, and we extend it to selective inference.

\begin{theorem} \label{theorem:au-surface}
Consider the hypothesis region $H = \Rc(h)$ and the selective region $S = \Rc(s)^c$ for any $h,s\in \Tc$.
For any $0<\alpha<1$, we can specify $r\in \Tc$ for the rejection region $R=\Rc(r)^c$ so that the selective rejection probability takes the constant value $\alpha$ for $\mu$ on $\partial H$;
\begin{equation} \label{eq:selective-rejection-probability}
	\frac{P(Y\in R \mid \mu )}{P(Y\in S\mid \mu)} \doteq \alpha,\quad \forall\mu\in\partial H.
\end{equation}
The coefficients of $r(u)$ are solved as
$r_0 \doteq  h_0 - h_{ii}  - \bar\Phi^{-1}( \alpha \bar\Phi(h_0 - s_0 - h_{ii}) )$,
$r_i \doteq h_i - \alpha C (h_i - s_i)$ and
$r_{ij} \doteq h_{ij} - \alpha C (h_{ij} - s_{ij})$, where
\[
  C = \frac{\phi(h_0 - s_0)}{\phi(\bar\Phi^{-1}(\alpha \bar\Phi(h_0 - s_0))   )}.
\]
For sufficiently large $n$, $r(u)\le s(u)$ and $R\subset S$ in the neighborhood of $u=0$,
and thus (\ref{eq:selective-rejection-probability}) is the conditional probability $P(Y\in R \mid Y\in S , \mu)$.
We also have an expression of $\alpha$ in terms of geometric quantities as
\begin{equation} \label{eq:au-surface-alpha}
	\frac{\bar \Phi(h_0 - r_0 - h_{ii})}{\bar \Phi(h_0 - s_0 -h_{ii})} \doteq \alpha.
\end{equation}
\end{theorem}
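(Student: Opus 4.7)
The plan is to parameterize $\mu\in\partial H$ by $\mu=(\theta,-h(\theta))$ for $\theta\in\mathbb{R}^m$ and reduce (\ref{eq:selective-rejection-probability}) to an identity in $\theta$ that can be matched order by order. Since $Y\sim N_{m+1}(\mu,I_{m+1})$ coincides with the bootstrap model (\ref{eq:bpmodel}) at $\sigma^2=1$ and $y=\mu$, Lemma~\ref{lemma:geometry-of-bp} applied at scale $\sigma^2=1$ to the regions $\Rc(r)$ and $\Rc(s)$ with the carrying curve $h$ yields $P(Y\in\Rc(r)\mid\mu)\doteq\bar\Phi(\hat\eta(r\mid h,\theta)+r_{ii})$ and $P(Y\in\Rc(s)\mid\mu)\doteq\bar\Phi(\hat\eta(s\mid h,\theta)+s_{ii})$. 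Taking complements via $\bar\Phi(-x)=1-\bar\Phi(x)$, the unbiasedness condition (\ref{eq:selective-rejection-probability}) becomes
\ba
\bar\Phi(L(\theta))\doteq\alpha\,\bar\Phi(M(\theta))\quad\text{for all }\theta\text{ in a neighborhood of }0,
\ea
where $L(\theta)=(h_0-r_0)+(h_i-r_i)\theta_i+(h_{ij}-r_{ij})\theta_i\theta_j-r_{ii}$ and $M(\theta)$ is the analogous polynomial with $s$ in place of $r$. Writing $g(x)=\bar\Phi^{-1}(\alpha\bar\Phi(x))$, the target becomes the polynomial identity $L(\theta)\doteq g(M(\theta))$.

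Next, I would exploit the nearly parallel assumption $h,s,r\in\Tc$, which forces $h_i-r_i$, $h_i-s_i$, $h_{ij}-r_{ij}$, $h_{ij}-s_{ij}$ all to be $O(n^{-1/2})$, so that $M(\theta)-M(0)=O(n^{-1/2})$ for bounded $\theta$. A first-order Taylor expansion of $g$ about $M(0)$ then has remainder $O(n^{-1})$, and matching the constant, linear, and quadratic coefficients in $\theta$ of $L(\theta)$ against the expansion of $g(M(\theta))$ yields three families of equations. Since $g'(M(0))=\alpha\phi(M(0))/\phi(g(M(0)))$ simplifies to $\alpha C$ up to an $O(n^{-1/2})$ error---using $M(0)=h_0-s_0+O(n^{-1/2})$ and $g(M(0))\doteq\bar\Phi^{-1}(\alpha\bar\Phi(h_0-s_0))$---and since this error is multiplied only by the $O(n^{-1/2})$ quantities $h_k-s_k$ and $h_{ij}-s_{ij}$, the net contributions are $O(n^{-1})$ and absorbed into $\doteq$, yielding $r_k\doteq h_k-\alpha C(h_k-s_k)$ and $r_{ij}\doteq h_{ij}-\alpha C(h_{ij}-s_{ij})$.

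The $\theta^0$ equation $L(0)\doteq g(M(0))$ gives $r_0\doteq h_0-r_{ii}-g(h_0-s_0-s_{ii})$; substituting $r_{ii}\doteq h_{ii}-\alpha C(h_{ii}-s_{ii})$ and Taylor expanding $g(h_0-s_0-s_{ii})$ about $h_0-s_0-h_{ii}$ with $g'\doteq\alpha C$, the two $\alpha C(h_{ii}-s_{ii})$ corrections cancel and leave $r_0\doteq h_0-h_{ii}-\bar\Phi^{-1}(\alpha\bar\Phi(h_0-s_0-h_{ii}))$, which is equivalent to (\ref{eq:au-surface-alpha}). Finally, since $0<\alpha<1$ and $\bar\Phi$ is strictly decreasing, $\bar\Phi^{-1}(\alpha\bar\Phi(x))>x$, hence $r_0<s_0$, and continuity of $r,s$ extends $r(u)\le s(u)$ (and thus $R\subset S$) to a neighborhood of $u=0$. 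The main obstacle I expect is the bookkeeping of asymptotic orders in the $\doteq$ substitutions: each replacement of $r_{ii}$ or $s_{ii}$ by $h_{ii}$, or of $\phi(M(0))$ by $\phi(h_0-s_0)$, is only $O(n^{-1/2})$-accurate individually, and one has to verify at every step that it appears multiplied by another $O(n^{-1/2})$ factor so the combined error sits within the $O(n^{-1})$ tolerance of $\doteq$.
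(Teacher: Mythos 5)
Your proof is correct and follows essentially the same route as the paper: parameterize $\partial H$ as $\mu=(\theta,-h(\theta))$, apply Lemma~\ref{lemma:geometry-of-bp} at $\sigma^2=1$ to write $P(Y\in R\mid\mu)$ and $P(Y\in S\mid\mu)$ as $\bar\Phi$ of $\theta$-polynomials, Taylor expand in the $O(n^{-1/2})$ quantities, and match constant, linear, and quadratic coefficients. Your reformulation through $g=\bar\Phi^{-1}\circ(\alpha\bar\Phi)$, which converts the target to the polynomial identity $L(\theta)\doteq g(M(\theta))$, is a compact bookkeeping variant of the paper's direct expansion of $\bar\Phi$ on both sides of (\ref{eq:au-surface-proof1}); the underlying algebra, the identification $g'\doteq\alpha C$ (the paper's $C'\doteq C$), and the order-of-magnitude checks you flag at the end are the same.
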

\begin{proof}
First, we proceed by assuming $r\in \Tc$.
For any $\mu\in\partial H$, we write
$\mu=(\theta,-h(\theta))$, $\theta\in \mathbb{R}^m$.
Then it follows from Lemma~\ref{lemma:geometry-of-bp} that
\begin{align*}
P(Y\in S \mid \mu) &= 1-\alpha_1(S^c | \mu) \doteq \bar\Phi( - \psi_1(s | h,\theta)),\\
P(Y\in R \mid \mu) &= 1-\alpha_1(R^c | \mu) \doteq \bar\Phi( - \psi_1(r | h,\theta)).
\end{align*}
Therefore, (\ref{eq:selective-rejection-probability}) is expressed as
$ \bar\Phi( - \psi_1(r | h,\theta)) \doteq \alpha \bar\Phi( - \psi_1(s | h,\theta))$.
Substituting (\ref{eq:psi-hsq}) into it,  we get
\begin{align}
  &\bar\Phi\bigl( h_0 - r_0 + (h_i - r_i)\theta_i + (h_{ij} - r_{ij}) \theta_i \theta_j - r_{ii} \bigr)\nonumber \\
  \doteq \label{eq:au-surface-proof1}
  &\alpha \bar\Phi\bigl( h_0 - s_0 + (h_i - s_i)\theta_i + (h_{ij} - s_{ij} ) \theta_i \theta_j - s_{ii} \bigr).
\end{align}
We then solve the equation for $r$. The rest of the proof is given in
 Section~\ref{sec:proof-theorem-au-surface} (supplementary material).
\end{proof}

Suppose we observed $y=(0,-r_0)\in \partial R$. Then the $p$-value should be $\alpha$ for $y$, and
we define $p_{\mathrm{SI}}(H | S, y) \doteq \alpha$ with (\ref{eq:au-surface-alpha}).
There are geometric quantities in (\ref{eq:au-surface-alpha}), namely,
the signed distance $h_0 - r_0$ from $y$ to $\partial H$,
the signed distance $h_0 - s_0$ from $\proj(H|y)$ to $\partial S$ and
the mean curvature $h_{ii}$ of $\partial H$ at $\proj(H|y)$.
All these geometric quantities can be estimated from bootstrap probabilities
$\alpha_{\sigma^2}(H | y)$ and $\alpha_{\sigma^2}(S | y)$.

Substituting $\theta=0$ in (\ref{eq:au-surface-proof1}), we get
\begin{equation} \label{eq:au-surface-alpha2}
	\frac{\bar \Phi(h_0 - r_0 - r_{ii})}{\bar \Phi(h_0 - s_0 -s_{ii})} \doteq \alpha.
\end{equation}
Interestingly, the mean curvatures $r_{ii}$ of $\partial R$ and $s_{ii}$ of $\partial S$ in (\ref{eq:au-surface-alpha2}) are replaced by the mean curvature $h_{ii}$ of $\partial H$ in (\ref{eq:au-surface-alpha}).
In Theorem~\ref{theorem:au-pvalue} below, $p$-value is computed from (\ref{eq:au-surface-alpha}),
while we are not able to use (\ref{eq:au-surface-alpha2}) for computing the $p$-value because
 $r_{ii}$ is not directly estimated by multiscale bootstrap before knowing $r$.

The following theorem justifies (A) of Algorithm~\ref{alg:simbp}
with 
$\varphi_{H}(\sigma^2 | \beta_H)=\beta_{H,1}+\beta_{H,2}\sigma^2$ and
$\varphi_{S}(\sigma^2 | \beta_S)=\beta_{S,1}+\beta_{S,2}\sigma^2$.

\begin{theorem} \label{theorem:au-pvalue}
Consider the hypothesis region $H = \Rc(h)$ and the selective region $S = \Rc(s)^c$ for any $h,s\in \Tc$. Define $p$-value as
\begin{equation} \label{eq:au-pvalue}
	p_{\mathrm{SI}}(H | S, y) = \frac{\bar\Phi(\psi_{-1}(H | y))} { \bar\Phi( \psi_{-1}(H | y) + \psi_0(S | y))}.
\end{equation}
For computing (\ref{eq:au-pvalue}), 
the values of $\psi_{\sigma^2}(H | y)$ and $\psi_{\sigma^2}(S | y)$ are extrapolated to $\sigma^2\le0$
by the linear regression on $\sigma^2$ for $\sigma^2>0$.
Then, this $p$-value is second order accurate.
For any $0<\alpha<1$, the rejection region $R=\{y \mid p_{\mathrm{SI}}(H | S, y) <\alpha \}$
satisfies (\ref{eq:selective-rejection-probability}) in Theorem~\ref{theorem:au-surface}.
\end{theorem}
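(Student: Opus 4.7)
The plan is to show that the rejection region $R=\{y\mid p_{\mathrm{SI}}(H|S,y)<\alpha\}$ coincides, up to an error of order $O(n^{-1})$, with the unbiased rejection region $R=\Rc(r)^c$ constructed explicitly in Theorem~\ref{theorem:au-surface}. Since that theorem already establishes the selective unbiasedness (\ref{eq:selective-rejection-probability}) for its region, the same conclusion will then follow for the $R$ defined by (\ref{eq:au-pvalue}), with the claimed second-order accuracy.

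First I would fix a reference point $\mu_0\in\partial H$ at the origin so that $h_0=0$, $h_i=0$, and parametrize a point $y$ on the prospective boundary $\partial R$ as $y=(\theta,-r(\theta))$ for some smooth $r$; the explicit formulas of Theorem~\ref{theorem:au-surface} make $r\in\Tc$ consistent a posteriori. Lemma~\ref{lemma:geometry-of-bp} gives $\psi_{\sigma^2}(H|y)\doteq\hat\eta(H|y)+\hat\gamma(H|y)\sigma^2$, linear in $\sigma^2$ up to $O(n^{-1})$, so fitting the poly.2 model recovers the intercept and slope to the same order; extrapolation yields $\psi_{-1}(H|y)\doteq\hat\eta(H|y)-\hat\gamma(H|y)$ and analogously $\psi_0(S|y)\doteq\hat\eta(S|y)$. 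Plugging into (\ref{eq:psi-hsq}) and using $\psi_{\sigma^2}(S|y)=-\psi_{\sigma^2}(s|r,\theta)$, the condition $p_{\mathrm{SI}}(H|S,y)=\alpha$ along $\partial R$ reduces to
\[
\bar\Phi\bigl(h_0-r_0+(h_i-r_i)\theta_i+(h_{ij}-r_{ij})\theta_i\theta_j-h_{ii}\bigr)\doteq\alpha\bar\Phi\bigl(h_0-s_0+(h_i-s_i)\theta_i+(h_{ij}-s_{ij})\theta_i\theta_j-h_{ii}\bigr).
\]

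The remaining task is to match this equation with (\ref{eq:au-surface-proof1}) in the proof of Theorem~\ref{theorem:au-surface}, whose LHS and RHS carry $-r_{ii}$ and $-s_{ii}$ respectively inside $\bar\Phi$ instead of the common $-h_{ii}$ above. Since $h_{ii}-r_{ii}$ and $h_{ii}-s_{ii}$ are $O(n^{-1/2})$, the Taylor expansion (\ref{eq:phi-taylor}) converts the discrepancies into $O(n^{-1/2})$ linear corrections on each side. Substituting $r_{ij}\doteq h_{ij}-\alpha C(h_{ij}-s_{ij})$ from Theorem~\ref{theorem:au-surface}, together with the definition of $C$, I expect the two corrections to cancel modulo $O(n^{-1})$; this is precisely the self-consistency that underlies the paper's equivalence of (\ref{eq:au-surface-alpha}) and (\ref{eq:au-surface-alpha2}). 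Hence the $r$ defined by the two equations agree to order $O(n^{-1})$, so $\partial R$ matches the boundary of Theorem~\ref{theorem:au-surface} to the same order, and (\ref{eq:selective-rejection-probability}) transfers to the $R$ defined by (\ref{eq:au-pvalue}).

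The principal obstacle is this final cancellation: verifying that the $O(n^{-1/2})$ curvature corrections balance across the equation after expanding $\bar\Phi$ requires careful bookkeeping of the coefficient $C=\phi(h_0-s_0)/\phi(\bar\Phi^{-1}(\alpha\bar\Phi(h_0-s_0)))$ and of the way $\bar\Phi^{-1}(\alpha\bar\Phi(h_0-s_0-h_{ii}))$ differs from $\bar\Phi^{-1}(\alpha\bar\Phi(h_0-s_0))$. All other ingredients are direct applications of Lemma~\ref{lemma:geometry-of-bp} combined with the linearity-based justification of the regression extrapolation.
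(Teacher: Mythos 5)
Your proposal is correct and follows essentially the same route as the paper's proof: define $\partial R$ as the level set $\{y\mid p_{\mathrm{SI}}(H|S,y)=\alpha\}$, parametrize it, compute $\psi_{-1}(H|y)$ and $\psi_0(S|y)$ via Lemma~\ref{lemma:geometry-of-bp}, and compare coefficients with Theorem~\ref{theorem:au-surface}. The only difference in execution is bookkeeping: you parametrize $\partial R$ directly as $(\theta,-r(\theta))$ and match your resulting equation against (\ref{eq:au-surface-proof1}), whereas the paper's supplementary proof parametrizes $\partial R$ as a normal shift $\eta(u)$ from $\Bc(h)$ using Lemma~\ref{lemma:change-of-coordinates} and then Taylor-expands $\eta(u)\doteq\bar\Phi^{-1}(\alpha\bar\Phi(h(u)-s(u)-h_{ii}))+h_{ii}$. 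Both paths are equivalent and both hinge on the same cancellation.

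The obstacle you flag does resolve exactly as you expect. By (\ref{eq:au-surface-proof3}), $h_0-r_0\doteq\bar\Phi^{-1}(\alpha\bar\Phi(h_0-s_0))$, so $\phi(h_0-r_0)\,C\doteq\phi(h_0-s_0)$ with $C=\phi(h_0-s_0)/\phi(\bar\Phi^{-1}(\alpha\bar\Phi(h_0-s_0)))$. Expanding $\bar\Phi$ in (\ref{eq:phi-taylor}), the replacement $-r_{ii}\mapsto -h_{ii}$ on the left introduces the $O(n^{-1/2})$ term $\phi(h_0-r_0)\,\alpha C(h_{ii}-s_{ii})$, while $-s_{ii}\mapsto -h_{ii}$ on the right introduces $\alpha\phi(h_0-s_0)(h_{ii}-s_{ii})$; these agree up to $O(n^{-1})$, so your equation and (\ref{eq:au-surface-proof1}) determine the same $r\in\Tc$ to second order. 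This is, as you note, the same self-consistency as in the equivalence of (\ref{eq:au-surface-alpha}) and (\ref{eq:au-surface-alpha2}).
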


\begin{proof}
The theorem is a direct consequence of Lemma~\ref{lemma:geometry-of-bp} and Theorem~\ref{theorem:au-surface}.
Let $y=(0,-r_0)$. From (\ref{eq:psi-hsq}), we have
\begin{equation} \label{eq:psi-h-regression}
	\psi_{\sigma^2}(H | y) \doteq \psi_{\sigma^2}(h | r,0) = h_0 - r_0 + h_{ii}\sigma^2,
\end{equation}
\begin{equation}\label{eq:psi-s-regression}
	\psi_{\sigma^2}(S | y) \doteq -\psi_{\sigma^2}(s | r,0) = -s_0 + r_0 - s_{ii}\sigma^2.
\end{equation}
By fitting the linear models (\ref{eq:psi-h-regression}) and (\ref{eq:psi-s-regression}) to
observed bootstrap probabilities for $\sigma^2>0$, we get $h_0-r_0$, $s_0 - r_0$, $h_{ii}$, $s_{ii}$ as regression coefficients.
Then extrapolating the models formally to $\sigma^2\le 0$, we have
$\psi_{-1}(H | y)\doteq h_0 - r_0 - h_{ii}$, $\psi_0(S | y) \doteq -s_0 + r_0$.
Substituting them into (\ref{eq:au-pvalue}), we get
\[
	p_{\mathrm{SI}}(H | S, y)  \doteq \frac{ \bar\Phi(h_0 - r_0 - h_{ii})} {\bar\Phi(h_0 - s_0 - h_{ii})},
\]
which coincides with the $\alpha$ in (\ref{eq:au-surface-alpha}).
The rest of the proof is given in Section~\ref{sec:proof-theorem-au-pvalue} (supplementary material).
\end{proof}

\begin{col}\label{col:non-selective-inference}
Consider the hypothesis region $H=\Rc(h)$ for any $h\in \Tc$.
For any $0<\alpha<1$, we can specify $r\in \Tc$ for the rejection region $R=\Rc(r)^c$
so that the non-selective rejection probability is $P(Y\in R | \mu) \doteq \alpha$, $\forall \mu\in \partial H$.
The coefficients of $r(u)$ are $r_0\doteq h_0 - h_{ii} - \bar\Phi^{-1}(\alpha)$, $r_i\doteq h_i$, $r_{ij} \doteq h_{ij}$.
This rejection region is expressed as $R = \{y \mid p_{\mathrm{AU}}(H|y)<\alpha\}$,
and the approximately unbiased $p$-value $p_{\mathrm{AU}}(H|y) = \bar\Phi(\psi_{-1}(H|y))$ is second order accurate;
in fact third order accurate as shown in \cite{Shimodaira:2004:AUT}.
\end{col}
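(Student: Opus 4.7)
The plan is to treat the corollary as the $S=\mathbb{R}^{m+1}$ specialization of Theorem~\ref{theorem:au-surface} and Theorem~\ref{theorem:au-pvalue}. Since $\mathbb{R}^{m+1}$ is not of the form $\Rc(s)^c$ with $s\in\Tc$, I will not invoke those theorems as black boxes; instead I will rerun the short argument in the non-selective case, where the denominator $P(Y\in S\mid\mu)\equiv 1$ simplifies everything.

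First I would derive the rejection surface. Write $\mu=(\theta,-h(\theta))$ for an arbitrary point on $\partial H$. By Lemma~\ref{lemma:geometry-of-bp} applied to $R=\Rc(r)^c$,
\[
P(Y\in R\mid\mu) = 1 - \alpha_1(\Rc(r)\mid\mu) \doteq 1 - \bar\Phi(\psi_1(r\mid h,\theta)) = \bar\Phi(-\psi_1(r\mid h,\theta)).
\]
Setting this equal to $\alpha$ for every $\theta$ reduces the problem to making $\psi_1(r\mid h,\theta)\doteq -\bar\Phi^{-1}(\alpha)$ identically in $\theta$. Substituting the expansion (\ref{eq:psi-hsq}),
\[
r_0-h_0 + (r_i-h_i)\theta_i + (r_{ij}-h_{ij})\theta_i\theta_j + r_{ii} \doteq -\bar\Phi^{-1}(\alpha),
\]
and matching coefficients of $\theta$ forces $r_i\doteq h_i$, $r_{ij}\doteq h_{ij}$, hence $r_{ii}\doteq h_{ii}$, and then $r_0\doteq h_0 - h_{ii}-\bar\Phi^{-1}(\alpha)$, which is exactly the formula in the statement. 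Along the way one checks that $r$ stays in the class $\Tc$, since $r_0=O(1)$ and $r_i,r_{ij}=O(n^{-1/2})$ inherit the orders of $h$.

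Next I would verify that the region $\{y:p_\mathrm{AU}(H\mid y)<\alpha\}$ coincides with $\Rc(r)^c$. For $y=(\theta,-\tilde s(\theta))$ with $\tilde s\in\Tc$, the argument used to obtain (\ref{eq:psi-h-regression}) gives $\psi_{\sigma^2}(H\mid y)\doteq \hat\eta(h\mid\tilde s,\theta)+h_{ii}\sigma^2$; the same linear extrapolation step as in Theorem~\ref{theorem:au-pvalue} yields $\psi_{-1}(H\mid y)\doteq \hat\eta(h\mid\tilde s,\theta)-h_{ii}$. Evaluating at a boundary point $y\in\partial R$, where $\hat\eta(h\mid\tilde s,\theta)\doteq h_0-r_0 = h_{ii}+\bar\Phi^{-1}(\alpha)$, one obtains $p_\mathrm{AU}(H\mid y)=\bar\Phi(\bar\Phi^{-1}(\alpha))=\alpha$, and monotonicity of $\bar\Phi$ in the signed distance pins down the two-sided inclusion. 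Finally, the third-order accuracy claim is not proved here but imported directly from \cite{Shimodaira:2004:AUT}, where the error of $p_\mathrm{AU}$ for smooth $\partial H$ is shown to be $O(n^{-3/2})$.

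The only non-routine step is the coefficient-matching, which tacitly requires that the identity $\psi_1(r\mid h,\theta)\doteq -\bar\Phi^{-1}(\alpha)$ hold uniformly in $\theta$ on the relevant $O(1)$-neighborhood of the origin; this is precisely why I impose $h,r\in\Tc$ (nearly parallel surfaces), and not merely $h_0=h_i=0$ at a single point. Given that Lemma~\ref{lemma:geometry-of-bp} already supplies the needed uniform second-order expansion, there is no further obstacle: the corollary is essentially a bookkeeping specialization of the selective case.
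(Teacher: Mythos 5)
Your proof is correct, and the final result matches the paper's claim. Your route differs from the paper's, though. The paper proves the corollary by a formal degeneration of Theorem~\ref{theorem:au-surface} and Theorem~\ref{theorem:au-pvalue}: set $S=\Rb^{m+1}$, then substitute $s_0=-\infty$, $C=0$, $\psi_0(S|y)=-\infty$ into the already-derived formulas for $r_0$, $r_i$, $r_{ij}$ and into (\ref{eq:au-pvalue}); the denominator of the $p$-value collapses to $\bar\Phi(-\infty)=1$, recovering $p_\mathrm{AU}=\bar\Phi(\psi_{-1}(H|y))$. You instead notice, correctly, that $\Rb^{m+1}$ is \emph{not} of the form $\Rc(s)^c$ for any $s\in\Tc$ (since $s_0=-\infty\ne O(1)$), so the theorems do not literally apply; you therefore rerun the coefficient-matching argument with the trivial denominator $P(Y\in S\mid\mu)\equiv 1$ from the start. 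What the paper's version buys is brevity and the fact that one visibly sees the non-selective result sitting as a limit of the selective one. What your version buys is rigor: it avoids the out-of-class formal substitution entirely, at the cost of repeating (in simplified form) the Taylor-expansion-and-matching step from the proof of Theorem~\ref{theorem:au-surface}. Both land on the same formulas $r_i\doteq h_i$, $r_{ij}\doteq h_{ij}$, $r_0\doteq h_0-h_{ii}-\bar\Phi^{-1}(\alpha)$ and the same identification $R=\{y:p_\mathrm{AU}(H|y)<\alpha\}$, with third-order accuracy imported, as the paper does, from \cite{Shimodaira:2004:AUT} rather than re-proved.
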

\begin{proof}
All the second-order results follow by letting $S=\Rb^{m+1}$ in Theorem~\ref{theorem:au-surface} and Theorem~\ref{theorem:au-pvalue},
where $s_0=-\infty$, $C=0$, $\psi_0(S|y)=-\infty$.
\end{proof}

Let us verify that $p_{\mathrm{AU}}$ and $p_{\mathrm{BP}}$ are biased heavily for selective inference.
For a $p$-value $p(y)$, consider the rejection region $R=\{ y \mid p(y)<\alpha\}$ and
let us denote the selective rejection probability as $\alpha(p) = P(Y\in R \mid Y \in S, \mu)$ at $\mu=(0,-h(0))\in\partial H$, which is given by
the left hand side of (\ref{eq:au-surface-alpha2}). For $p=p_\text{SI}$, $\alpha(p_{\mathrm{SI}}) \doteq \alpha$.
%Consider the $p$-value of the form
%\begin{equation} \label{eq:p-sigma}
%  p_{\sigma^2}(H|y)=\bar\Phi(\psi_{\sigma^2}(H | y)).
%\end{equation}
Consider $p = p_{\sigma^2}$ of (\ref{eq:p-sigma}),
To get $\partial R$, $\psi_{\sigma^2}(h|r,\theta)=\bar\Phi^{-1}(\alpha)$ is solved for $r(u)$ by looking at the coefficients in (\ref{eq:psi-hsq}). Then $r(u)$ is given by $r_0 \doteq h_0 + h_{ii}\sigma^2 - \bar\Phi^{-1}(\alpha)$, $r_i \doteq h_i$, $r_{ij} \doteq h_{ij}$.
Substituting it into (\ref{eq:au-surface-alpha2}), $\alpha(p_{\sigma^2}) \doteq
\{\bar\Phi(\bar\Phi^{-1}(\alpha) - h_{ii}(1+\sigma^2))\}/\bar\Phi(h_0 - s_0 - s_{ii})$.
Therefore, 
$\alpha(p_{\mathrm{BP}}) \doteq \bar\Phi(\bar\Phi^{-1}(\alpha) - 2 h_{ii})/\bar\Phi(h_0 - s_0 - s_{ii})$ for $\sigma^2=1$ and
$\alpha(p_{\mathrm{AU}}) = \alpha/\bar\Phi(h_0 - s_0 - s_{ii})$ for $\sigma^2=-1$.
Due to the selection probability $P(Y\in S\mid \mu)$ in the denominator,
$\alpha(p_{\sigma^2}) $ is very much different from $\alpha$.

\subsection{Iterated bootstrap and related methods} \label{sec:iterated-bootstrap}

Iterated bootstrap is a general idea to improve the accuracy by applying bootstrap repeatedly.
It has been used for confidence intervals of parameters \citep{Hall:1986:BCI},
and for the problem of regions as well \citep{Efron:Tibshirani:1998:PR}.
The computational cost (time complexity) of $k$th-iterated bootstrap is $O(B^k)$ when each bootstrap uses $B$ bootstrap replicates, while that of multiscale bootstrap is only $O(B)$,
and thus it is often prohibitive even for the double bootstrap, i.e., the iterated bootstrap with $k=2$.
It also requires the computation of $\proj(H|y)$, which can be difficult in applications.
Here we show that multiscale bootstrap calculates $p$-values equivalent to double bootstrap with less computation.

Let $p_{\mathrm{BP},k}(H|S,y)$, $k=1,2,\ldots$, be the series of iterated bootstrap $p$-values.
At Step $k+1$, we compute
\begin{equation} \label{eq:iterated-bp-k}
  p_{\mathrm{BP},k+1}(H|S,y) = 
  \frac{P_1 \{ p_{\mathrm{BP},k}(H|S,Y^*) <  p_{\mathrm{BP},k}(H|S,y)  \mid \proj(H|y) \} }
  {P_1(Y^* \in S \mid \proj(H|y))},
\end{equation}
where the probability is with respect to the null distribution (\ref{eq:yboot-zero}).
The following theorem shows that the double bootstrap computes $p$-value equivalent to $p_{\mathrm{SI}}$.
The double bootstrap is robust to the computational error in the $u$-axis of $\proj(H|y)$.

\begin{theorem} \label{theorem:double-bootstrap}
Consider the hypothesis region $H = \Rc(h)$ and the selective region $S = \Rc(s)^c$ for any $h,s\in \Tc$.
Let $y=(0,-r_0)$ and $\hat\mu = (u_0, -h(u_0)+O(n^{-1}))$ for any $u_0 = O(n^{-1/2})$ by allowing the error of $ O(n^{-1/2})$ to the $u$-axis of $\proj(H|y)$, which is
$u_i = -(h_0-r_0)h_i=O(n^{-1/2})$ according to Lemma~\ref{lemma:change-of-coordinates} (supplementary material).
For $k=1$, we adjust (\ref{eq:p-sigma}) by the selection probability to define
\begin{equation} \label{eq:p-bp-1}
  p_{\mathrm{BP},1}(H|S,y) = \frac{p_{\sigma^2}(H|y)}
  {P_1(Y^* \in S \mid \hat \mu)}
\end{equation}
for some $\sigma^2\in\mathbb{R}$,
and apply (\ref{eq:iterated-bp-k}) for computing $ p_{\mathrm{BP},2}(H|S,y)$.
Then we have
\begin{equation} \label{eq:p-bp-1-2-lst}
  p_{\mathrm{BP},1}(H|S,y) \doteq\frac{ \bar\Phi(h_0 - r_0 + h_{ii}\sigma^2)} {\bar\Phi(h_0 - s_0 - s_{ii})},\,
  p_{\mathrm{BP},2}(H|S,y) \doteq\frac{ \bar\Phi(h_0 - r_0 - h_{ii})} {\bar\Phi(h_0 - s_0 - h_{ii})}.
\end{equation}
Therefore, $p_{\mathrm{BP},2}(H|S,y)\doteq p_{\mathrm{SI}}(H|S,y)$, i.e., equivalence in the second order accuracy,
and then $p_{\mathrm{BP},2}(H|S,y)$ is second order accurate.
The result does not depend on $\sigma^2$; 
the numerator of (\ref{eq:p-bp-1}) can be $\bar\Phi(t)=p_0(H|y)$ for $\sigma^2=0$, say.
%On the other hand, $p_{\mathrm{BP},1}(H|S,y) = p_{\mathrm{BP},2}(H|S,y)+O_p(n^{-1/2})$ is only first order accurate for $\sigma^2\neq -1$,
%but $p_{\mathrm{BP},1}(H|S,y)$ becomes second order accurate if $\sigma^2=-1$. => ミス
On the other hand, $p_{\mathrm{BP},1}(H|S,y) = p_{\mathrm{BP},2}(H|S,y)+O_p(n^{-1/2})$ is only first order accurate,
but $p_{\mathrm{BP},1}(H|S,y)$ becomes second order accurate if $S=H^c$ and $\sigma^2=-1$.
\end{theorem}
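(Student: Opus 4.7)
First, I would derive the formula for $p_{\mathrm{BP},1}$ by applying Lemma~\ref{lemma:geometry-of-bp} twice. For $y=(0,-r_0)$, the lemma with $H=\Rc(h)$ and observation function $r$ (so $y=(\theta,-r(\theta))$ at $\theta=0$) gives $\psi_{\sigma^2}(H|y) \doteq h_0 - r_0 + h_{ii}\sigma^2$, whence $p_{\sigma^2}(H|y) \doteq \bar\Phi(h_0 - r_0 + h_{ii}\sigma^2)$. For the selection probability, I treat $\hat\mu \doteq (u_0,-h(u_0))$ as a point on $\partial H$ and apply the lemma to $S^c=\Rc(s)$ with observation function $h$ at $\theta=u_0$; the assumption $u_0=O(n^{-1/2})$ together with the nearly parallel bound $s_i-h_i=O(n^{-1/2})$ renders the cross terms $(s_i-h_i)u_{0,i}=O(n^{-1})$ and $(s_{ij}-h_{ij})u_{0,i}u_{0,j}=O(n^{-3/2})$ negligible at $\doteq$-accuracy, so $\psi_1(s|h,u_0) \doteq s_0 - h_0 + s_{ii}$ and $P_1(Y^*\in S\mid\hat\mu) \doteq \bar\Phi(h_0-s_0-s_{ii})$. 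Combining yields the stated formula for $p_{\mathrm{BP},1}$.

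Next I would unfold (\ref{eq:iterated-bp-k}) for $p_{\mathrm{BP},2}$. Under the null $Y^*\sim N(\hat\mu,I_{m+1})$ the signed distance $T^*=\eta(H|Y^*)$ satisfies $T^* \sim N(h_{ii},1)$ up to $O_p(n^{-1/2})$: writing $Y^*=(U^*,V^*)$ with $U^*\sim N(u_0,I_m)$ and $V^*\sim N(-h(u_0),1)$ independently, one has $T^*\doteq V^*+h(U^*)$, whose conditional law given $U^*$ is $N(h(U^*)-h(u_0),1)$ with $E[h(U^*)-h(u_0)]=h_{ii}$. Substituting the $p_{\mathrm{BP},1}$ formula and using the monotonicity of $\bar\Phi$, the event $\{p_{\mathrm{BP},1}(H|S,Y^*)<p_{\mathrm{BP},1}(H|S,y)\}$ rewrites as $\{T^*>T^*_{\mathrm{th}}(U^*)\}$ with $T^*_{\mathrm{th}}(U^*)=h_0-r_0-K(\sigma^2)f(U^*)$, where $K(\sigma^2)=\bar\Phi(c_\sigma)/[Q(u_0)\phi(c_\sigma)]$, $c_\sigma=h_0-r_0+h_{ii}\sigma^2$, and $f(U^*)=Q(U^*)-Q(u_0)$ is the $O(n^{-1/2})$ deviation of the selection-probability divisor $Q(u')\doteq\bar\Phi(h(u')-s(u')-s_{ii})$ along $\partial H$.

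The key technical step, and main obstacle, is computing the numerator $E_{U^*}[\bar\Phi(T^*_{\mathrm{th}}(U^*)-(h(U^*)-h(u_0)))]$ by Taylor-expanding around $u_0$ and invoking $E[\bar\Phi(A+X)]\doteq\bar\Phi(A+E[X])$ (valid because the integrand has variance $O(n^{-1})$); the decisive moment is $E[(h-s)(U^*)-(h-s)(u_0)]=h_{ii}-s_{ii}$, which yields $E[f(U^*)]\doteq-\phi(h_0-s_0-s_{ii})(h_{ii}-s_{ii})$. After one further Taylor expansion, dividing by the denominator $\bar\Phi(h_0-s_0-s_{ii})$ from the first step produces $\bar\Phi(h_0-r_0-h_{ii})\bar\Phi(h_0-s_0+h_{ii}-2s_{ii})/\bar\Phi(h_0-s_0-s_{ii})^2$; this collapses to $\bar\Phi(h_0-r_0-h_{ii})/\bar\Phi(h_0-s_0-h_{ii})$ through the midpoint identity $\bar\Phi(\lambda+\epsilon)\bar\Phi(\lambda-\epsilon)\doteq\bar\Phi(\lambda)^2$ applied with $\lambda=h_0-s_0-s_{ii}$ and $\epsilon=h_{ii}-s_{ii}$. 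Independence of $\sigma^2$ at $\doteq$-accuracy follows because $\sigma^2$ enters only through $\bar\Phi(c_\sigma)/\phi(c_\sigma)$ inside $K(\sigma^2)$, whose variation with $\sigma^2$ is itself $O(n^{-1/2})$ and multiplies the already $O(n^{-1/2})$ factor $h_{ii}-s_{ii}$, so the net $\sigma^2$-variation is $O(n^{-1})$ and absorbed in $\doteq$. Matching with $p_{\mathrm{SI}}$ is then immediate from Theorem~\ref{theorem:au-pvalue}, and second-order accuracy of $p_{\mathrm{BP},2}$ follows.

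Finally, for the last claim I compare (\ref{eq:p-bp-1-2-lst}) term by term: the numerators agree when $h_{ii}\sigma^2=-h_{ii}$, i.e., $\sigma^2=-1$, and the denominators agree when $s_{ii}=h_{ii}$, which is forced by $S=H^c$ since then $s\equiv h$. Under both conditions simultaneously $p_{\mathrm{BP},1}\doteq p_{\mathrm{BP},2}\doteq p_{\mathrm{SI}}$, and $p_{\mathrm{BP},1}$ inherits second-order accuracy; otherwise the two $p$-values disagree at $O(n^{-1/2})$ and $p_{\mathrm{BP},1}$ is only first-order accurate.
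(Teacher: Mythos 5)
Your proof is correct, and the derivation of the $p_{\mathrm{BP},1}$ formula is essentially the same as the paper's (applying Lemma~\ref{lemma:geometry-of-bp} to both the numerator with observation function $r$ and the denominator at $\hat\mu$, noting the cross terms are negligible for $h,s\in\Tc$ and $u_0=O(n^{-1/2})$). Your treatment of $p_{\mathrm{BP},2}$, however, follows a genuinely different route. The paper first identifies the rejection surface $\Bc(r)$ of $p_{\mathrm{BP},1}$ explicitly: it computes $p_{\mathrm{BP},1}$ at a generic $(\theta,-r(\theta))$, applies Lemma~\ref{lemma:phi-phi-taylor} to extract $\epsilon_3$, and sets the $\theta_i$ and $\theta_i\theta_j$ coefficients to zero, obtaining $r_i\doteq(1-A)h_i+As_i$, $r_{ij}\doteq(1-A)h_{ij}+As_{ij}$; then the numerator of $p_{\mathrm{BP},2}$ is simply $P_1(Y^*\in\Rc(r)^c\mid\hat\mu)\doteq\bar\Phi(h_0-r_0-r_{ii})$ by another application of Lemma~\ref{lemma:geometry-of-bp}. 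You instead compute the inner probability directly by conditioning on $U^*$: you recast the event as $\{T^*>T^*_{\mathrm{th}}(U^*)\}$, take $E_{U^*}[\bar\Phi(\cdot)]$, evaluate $E[f(U^*)]\doteq-\phi(h_0-s_0-s_{ii})(h_{ii}-s_{ii})$ via the second-moment identity, and collapse the resulting $\bar\Phi$-product using the midpoint identity $\bar\Phi(\lambda+\epsilon)\bar\Phi(\lambda-\epsilon)\doteq\bar\Phi(\lambda)^2$ for $\epsilon=O(n^{-1/2})$. I verified that your intermediate expression $\bar\Phi(h_0-r_0-h_{ii})\bar\Phi(h_0-s_0+h_{ii}-2s_{ii})/\bar\Phi(h_0-s_0-s_{ii})^2$ is $\doteq$-equal to the paper's $\bar\Phi(h_0-r_0-(1-A)h_{ii}-As_{ii})/\bar\Phi(h_0-s_0-s_{ii})$ and that both collapse to the stated result. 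The paper's approach makes the geometry of the rejection surface explicit and reuses the lemma machinery more systematically; yours avoids computing $r_i,r_{ij}$ at the cost of more delicate Taylor expansions of the integrand (and implicitly requires the observation, which you use but do not state, that $Q$ evaluated at the $u$-coordinate of $\proj(H|Y^*)$ equals $Q(U^*)$ up to $O(n^{-1})$). Your final claim about $p_{\mathrm{BP},1}$ becoming second-order accurate when $S=H^c$ and $\sigma^2=-1$ matches the paper's argument.
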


\begin{proof}
See Section~\ref{sec:proof-theorem-double-bootstrap} (supplementary material).
\end{proof}

In Section~\ref{sec:intro-nulldist}, we have introduced $p_\mathrm{ET}$ \citep{Efron:Tibshirani:1998:PR} as a bias correction method using the null distribution. Here we extend it to selective inference for general $S$. The $p$-value is defined as
\[
  p_\text{ET-SI}(H|S,y) = \frac{\bar\Phi( \psi_1(H|y) - 2 z_\text{proj}(H|y) )}{ \bar\Phi(  \psi_1(H|y) - 2 z_\text{proj}(H|y) + \psi_1(S|y) - z_\text{proj}(S|y)) }.
\]
For the case of $S=H^c$, $ p_\text{ET-SI}(H|H^c,y) = p_\text{ET-SI}(H|y)$ because
$z_\text{proj}(H^c|y)=-z_\text{proj}(H|y)$, $\psi_1(H^c|y)=-\psi_1(H|y)$.
Considering the setup of Theorem~\ref{theorem:double-bootstrap},
the four terms  in $p_\text{ET-SI}(H|S,y)$ are expressed as
$z_\text{proj}(H|y)=\psi_1(H|\proj(H|y)) \doteq h_{ii}$, $z_\text{proj}(S|y)=\psi_1(S|\proj(S|y)) \doteq -s_{ii}$,
$\psi_1(H|y) \doteq h_0-r_0+h_{ii}$ and $\psi_1(S|y) \doteq -s_0+r_0-s_{ii}$.
Therefore $ p_\text{ET-SI}(H|S,y) \doteq \bar\Phi( h_0-r_0+h_{ii} - 2 h_{ii} )/\bar\Phi( h_0-r_0+h_{ii} - 2 h_{ii} -s_0+r_0-s_{ii} + s_{ii}) = \bar\Phi(h_0-r_0-h_{ii} )/\bar\Phi( h_0-s_0-h_{ii} ) \doteq p_\mathrm{SI}(H|S,y)$.
Thus $p_\text{ET-SI}(H|S,y) \doteq p_\mathrm{SI}(H|S,y)$ again, and they are equivalent in the second order accuracy.

%%%------------------------------------------------------------------------------------------------------------------------------
%%
%

%
%%
%%%------------------------------------------------------------------------------------------------------------------------------

%
%%
%%%------------------------------------------------------------------------------------------------------------------------------
\section{Asymptotic theory for non-smooth boundary surfaces}\label{sec:NFT}

\subsection{Nearly flat surfaces} \label{sec:nearly-flat}

In the previous section, 
we consider asymptotic behavior as $n$ goes to infinity. The shape of $H$ in the normalized space is magnified by $\sqrt{n}$.
In this large sample theory, the key point is that 
the boundary surface $\partial H$ of the hypothesis region approaches a flat surface 
in a neighborhood of any point on $\partial H$ if the surface is smooth.
However, this argument cannot apply to nonsmooth surfaces.
For example, if $H$ is a cone-shaped region, it is scale-invariant;
the shape remains as cone in the neighborhood of the vertex.
In many real world problems such as clustering and variable selection,
hypothesis and selective regions are represented as polyhedral convex cones (or their complement sets) at least locally thus have nonsmooth boundaries.
Although the chi-bar squared distribution appears in this kind of statistical inference under inequality constraints \citep{shapiro1985asymptotic,lin1997projections}, computation of the coefficients seems not very easy for our setting.

To deal with general regions with possibly nonsmooth boundary surfaces, 
we employ the asymptotic theory of {\it nearly flat surfaces} \citep{Shimodaira08}, which is reviewed in Sections~\ref{sec:nearly-flat} and \ref{sec:models}.
%In this asymptotic theory, we provide some theoretical justifications for our algorithm.
We provide a theoretical justification for (B) in Algorithm~\ref{alg:simbp}.
Roughly speaking, we consider the situation that the magnitude of $h$, say $\lambda$, 
becomes small
so that the file drawer problem of (\ref{eq:file-drawer-c}) appears again as the limiting distribution.
The scale in the direction of the tangent space is fixed in this theory
so that any boundary surfaces approach flat surfaces.
Instead of $n\rightarrow \infty$, we introduce the artificial parameter $\lambda$ and 
let $\lambda \rightarrow 0$.
It is worth noting that 
this theory is analogous to the classical theory with the relation $\lambda = 1/\sqrt{n}$.
Although this theory does not dependent on $n$, 
we implicitly assume that $n$ is sufficiently large to ensure the multivariate normal model (\ref{eq:model}).
Instead of the notation $\doteq$ used in previous sections,
we use $\simeq$ for the equality correct up to $O(\lambda)$ erring only $O(\lambda^2)$ in this section.

As with the previous section, for $y=(y_1,\dots,y_{m+1})\in \Rb^{m+1}$, 
let $u=(y_1,\dots,y_{m})$ and $v=y_{m+1}$.
For a continuous function $h:\Rb^m\rightarrow \Rb$ and $v_h\in \Rb$, 
we define the region by
\[
\Rc(h,v_h)=\{(u,v)\in \Rb^{m+1}\mid v\le v_h - h(u)\}.
\] 
When we consider $v_h$ as $-h_0$, $\Rc(h,v_h)$ corresponds to $\Rc(h)$ introduced in Section~\ref{sec:LST}.
Let us denote $L^1$-norm and $L^\infty$-norm of $h$ by
$\|h\|_1=\int_{\Rb^m} |h(u)|\,du$ and
$\|h\|_\infty = \sup_{u\in \Rb^m}|h(u)|$,
respectively.
We say that $h$ is \emph{nearly flat} if $\|h\|_\infty=O(\lambda)$, 
and if $L^1$-norms of $h$ and its Fourier transform $\tilde{h}$ are bounded; $\|h\|_1 < \infty$ and $\|\tilde h\|_1 < \infty$.
However, polynomials and cones are unbounded, and they are obviously not nearly flat.
As mentioned in Section~5.4 and Appendix~A.4 of \cite{Shimodaira08}, the results can be generalized to continuous functions of slow growth;
$|g(u)|=O(\|u\|^k)$ as $\|u\| \to\infty$ for some $k$. We can take a nearly flat $h$ approximating $g$ arbitrary well in a sufficiently large window.
In practical situations, the magnitude of $h$ is not necessarily too small.
From the numerical examples, we may see that our theory works even for a moderate $\lambda$.

The hypothesis and selective regions are defined, respectively, by
\[
  H=\Rc(h,0),\mbox{ and } S=\Rc(s,v_s)^c
\]
for nearly flat functions $h$ and $s$, and $v_s\in\mathbb{R}$.
Note that, for $H=\Rc(h,v_h)$, 
we can redefine $H$ as $\Rc(h,0)$ in the coordinate taking the origin at $(0,v_h)$.
For $0<\alpha<1$, let $v_r$ be a constant satisfying 
\[
\bar\Phi(v_r)=\alpha\bar\Phi(v_s),
\]
and let $R=\Rc(r,v_r)^c$ be a rejection region. 
Here, $v_s$ and $v_r$ correspond to $h_0-s_0$ and $h_0-r_0$ in Section~\ref{sec:LST}, respectively.

We will denote the Fourier transform of a nearly flat function $h$ by
$\tilde{h}(\omega)=\mathcal{F}h(\omega)=\int_{\Rb^m}e^{-i\omega \cdot u}h(u)\,du$, where $\omega\in \Rb^m$ is a spatial angular frequency vector and $i=\sqrt{-1}$ is the imaginary unit.
Moreover, let $h(u)=(\mathcal{F}^{-1}\tilde{h})(u)=(2\pi)^{-m}\int_{\Rb^m}e^{i\omega\cdot u}\tilde{h}(\omega)\,d\omega$ be the inverse Fourier transform of $\tilde{h}$.
Using these notations, 
we can represent the expected value of $h(U^\ast)$ 
with respect to $U^\ast\sim N_m(u,\sigma^2I_m)$ 
as follows:
\[
E_{\sigma^2}h(u)=E_{\sigma^2}(h(U^\ast)\mid u)=\Fc^{-1}\bigl[ e^{-\sigma^2\|\omega\|^2/2}\tilde{h}(\omega)\bigr](u).
\]
This is an application of the Gaussian low-pass filter 
$\tilde{f}_{\sigma^2}(\omega) = e^{-\sigma^2\|\omega\|^2/2}$ to $\tilde{h}$.
The inverse filter of $\tilde{f}_{\sigma^2}(\omega)$ is defined by 
$\mathcal{F}[E_{\sigma^2}^{-1}h](\omega)=(1/\tilde{f}_{\sigma^2}(\omega))\tilde{h}(\omega)$.
Applying the inverse Fourier transform to it, we can define the expected value with a negative variance, at least formally,
by
\[
E_{\sigma^2}^{-1}h(u)=\mathcal{F}^{-1}[(1/\tilde{f}_{\sigma^2}(\omega))\tilde{h}(\omega)]=\mathcal{F}^{-1}[e^{\sigma^2\|\omega\|^2/2}\tilde{h}(\omega)](u)=E_{-\sigma^2}h(u).
\]
Note that $E_{-\sigma^2}h=E_{\sigma^2}^{-1}h$ may not be defined unless
$\|e^{\sigma^2\|\omega\|^2/2}\tilde{h}(\omega)\|_1<\infty$ 
even though $E_{\sigma^2}h$ with $\sigma^2>0$ is nearly flat.

First of all, we provide the fundamental result in the theory of nearly flat surfaces 
corresponding to Lemma~\ref{lemma:geometry-of-bp} in the large sample theory.

\begin{lemma}\label{lemma:geometry-of-bp:NFS}
For a nearly flat function $h$ and a constant $v_h\in \Rb$, 
let $H=\Rc(h,v_h)$.
For $y=(u,v)\in \Rb^{m+1}$ and $\sigma^2>0$, we have 
\banum\label{eq:bp-approx}
\alpha_{\sigma^2}(H | y) 
= \Psig\lpar Y^\ast\in H \mid y\rpar
\simeq
\bPhi\left( \frac{v-v_h+E_{\sigma^2}h(u)}{\sigma} \right),
\eanum
and the 
the normalized bootstrap $z$-value is expressed as
\banum\label{eq:nz-value}
\psi_{\sigma^2}(H | y)=\sigma \bPhi^{-1}(\alpha_{\sigma^2}(H | y))\simeq v-v_h+E_{\sigma^2}h(u).
\eanum
\end{lemma}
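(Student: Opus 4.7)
My plan is to reduce the $(m+1)$-dimensional bootstrap probability to a one-dimensional computation by conditioning on $U^\ast$, and then to exploit near-flatness of $h$ through a single Taylor expansion of $\bar\Phi$. Write $Y^\ast = (U^\ast, V^\ast)$ with $U^\ast \sim N_m(u,\sigma^2 I_m)$ and, conditionally on $U^\ast$, $V^\ast \sim N(v, \sigma^2)$ independent. Since $Y^\ast \in H$ iff $V^\ast \le v_h - h(U^\ast)$, integrating out $V^\ast$ first gives
\[
\alpha_{\sigma^2}(H\mid y) = E_{\sigma^2}\!\left[\bar\Phi\!\left(\frac{v - v_h + h(U^\ast)}{\sigma}\right)\,\Bigm|\, u\right].
\]
This is the analogue of the starting identity used for (\ref{eq:bp-at-origin}) in the smooth case, but now valid for any continuous (possibly nonsmooth) $h$.

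Next I would perform the Taylor expansion (\ref{eq:phi-taylor}) of $\bar\Phi$ centered at $x_0 = (v - v_h + E_{\sigma^2}h(u))/\sigma$ with perturbation $\epsilon = (h(U^\ast) - E_{\sigma^2}h(u))/\sigma$. By construction $E_{\sigma^2}(\epsilon\mid u)=0$, so the first-order term vanishes in expectation. Near-flatness, $\|h\|_\infty = O(\lambda)$, forces $|\epsilon| = O(\lambda)$ uniformly, whence the $O(\epsilon^2)$ remainder is $O(\lambda^2)$ after integration (using that $\bar\Phi''=-\phi'$ is uniformly bounded). This yields
\[
\alpha_{\sigma^2}(H\mid y) = \bar\Phi(x_0) + O(\lambda^2),
\]
which is exactly the asserted relation (\ref{eq:bp-approx}) under the $\simeq$ convention of the section.

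For (\ref{eq:nz-value}), apply $\sigma\bar\Phi^{-1}$ to both sides. Since $\bar\Phi^{-1}$ has a bounded derivative $1/\phi$ in a neighborhood of $\bar\Phi(x_0)$ whenever $x_0$ is bounded (which holds for the regime of interest, where $v-v_h$ and $E_{\sigma^2}h(u)$ are controlled), a one-term linearization of $\bar\Phi^{-1}$ absorbs the $O(\lambda^2)$ perturbation and produces $\psi_{\sigma^2}(H\mid y) = v - v_h + E_{\sigma^2}h(u) + O(\lambda^2)$.

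The main obstacle is making the Taylor remainder bound uniform and legitimately of order $\lambda^2$, rather than just pointwise small. For this I would invoke the defining integrability conditions on nearly flat functions, $\|h\|_1<\infty$ and $\|\tilde h\|_1<\infty$, to justify that $E_{\sigma^2}h$ is well-defined, bounded and nearly flat (it is a Gaussian low-pass filtering in Fourier space, $\mathcal{F}[E_{\sigma^2}h](\omega) = e^{-\sigma^2\|\omega\|^2/2}\tilde h(\omega)$, which preserves the $L^\infty$ bound of order $\lambda$), so that $x_0$ stays in a compact set uniformly in $u$ and the constants in the $O(\lambda^2)$ remainder can be made $u$-independent. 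This Fourier-analytic bookkeeping is the only step that goes beyond the essentially algebraic expansion used in Lemma~\ref{lemma:geometry-of-bp}.
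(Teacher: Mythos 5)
Your proposal is correct and follows essentially the same route as the paper's proof: condition on $U^\ast$ to write $\alpha_{\sigma^2}(H\mid y) = E_{\sigma^2}[\bar\Phi((v-v_h+h(U^\ast))/\sigma)\mid u]$, Taylor-expand $\bar\Phi$ about $x_0=(v-v_h+E_{\sigma^2}h(u))/\sigma$, and use $E_{\sigma^2}(\epsilon\mid u)=0$ together with $\|h\|_\infty=O(\lambda)$ to kill the first-order term and bound the remainder by $O(\lambda^2)$. You also spell out the uniformity of the remainder bound and the linearization of $\bar\Phi^{-1}$ for (\ref{eq:nz-value}), which the paper leaves implicit.
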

\begin{proof}
See Section~\ref{sec:proof-geometry-of-bp:NFS} (supplementary material).
\end{proof}

%---
\subsection{Models for normalized bootstrap $z$-value}\label{sec:models}
%---
The key point of our algorithm is that 
the functional form of $\psi_{\sigma^2}(H | y)$ is estimated 
from the observed bootstrap probabilities computed at several $\sigma^2=n/n^\prime$.
We need a good parametric model $\varphi_{H}(\sigma^2 | \beta(y))$ with parameter $\beta(y)$.
From the scaling-law (\ref{eq:nz-value}),
it is important to specify an appropriate parametric model for $E_{\sigma^2}h(u)$.
The following results are shown in Section~5.4 of \cite{Shimodaira08}.

For smooth $h$, we have
\[
E_{\sigma^2}h(u)=\sum_{j=0}^\infty \sigma^{2j}\beta_j(u),
\]
where $\beta_0(u)=h(u)$, $\beta_1(u)=(1/2)\sum_{i=1}^m\partial^2h/\partial u_i^2$, and
\[
	\beta_j(u) = \frac{1}{2^j j!} \sum_{j_1+\cdots + j_m = j} \frac{j!}{j_1!\cdots j_m!}
	\frac{\partial ^{2j} h}{ \partial u_1^{2j_1} \cdots \partial u_m^{2j_m}}, \quad j\ge0.
\]
When the boundary surface can be approximated by a polynomial of degree $2k-1$, 
we may consider the following model, denoted poly.$k$, by redefining $\beta_0=v+\beta_0(u)$:
\begin{equation} \label{eq:polyk}
\varphi(\sigma^2 | \beta)=\sum_{j=0}^{k-1}\beta_j\sigma^{2j},\quad k\ge 1.
\end{equation}
If $h$ is a polynomial of degree $2k-1$, 
the model poly.$k$ correctly specifies $\psi_{\sigma^2}(H | y)$ 
by ignoring $O(\lambda^2)$ term.
It is worth noting that the parameters $(\beta_0,\beta_1,\dots)$ are interpreted as geometric quantities; 
$\beta_0$ is the signed distance from $y$ to the surface $\partial H$, and
$\beta_1$ is the mean curvature of the surface.

For a nonsmooth $h$, the above model is not appropriate. In fact, 
for a cone-shaped $H$ with the vertex at the origin, we have
\[
E_{\sigma^2}h(u)=\sum_{j=0}^\infty \sigma^{1-j}\beta_j(u)
\]
in a neighborhood of the vertex, where $\beta_j(u)=O(\|u\|^j)$ as $\|u\|$ goes to $0$.
The following model, denoted sing.$k$, takes conical singularity into account.
\begin{equation} \label{eq:singk}
\varphi(\sigma^2 |\beta)=\beta_0+\sum_{j=1}^{k-2}\frac{\beta_j\sigma^{2j}}{1+\beta_{k-1}(\sigma-1)},\quad k\ge 3,
\end{equation}
where $0\le \beta_{k-1}\le 1$. 
In practical situations, we are not sure which parametric model is the reality.
Thus, we prepare several candidate models describing the scaling-law of bootstrap probability, 
and choose the model based on the AIC value.

\subsection{Approximately unbiased $p$-values for selective inference in the theory of nearly flat surfaces}\label{sec:au-p-value-nfs}

Now, we ensure the existence of the rejection region corresponding to 
an approximately unbiased selective inference.
This result corresponds to Theorem~\ref{theorem:au-surface} of the large sample theory.
\begin{lemma}\label{lemma:rej-surface-nfs}
For nearly flat functions $h$ and $s$, and a constant $v_s\in \Rb$, 
we set $H=\Rc(h,0)$ and $S=\Rc(s,v_s)^c$ as the hypothesis and the selective regions, respectively.
Suppose that $E_{-1}h$ exists and is nearly flat.
Then, for a given $0<\alpha<1$, 
there exists a nearly flat function $r$  such that
\banum\label{eq:selec-rej-prob-nfs}
\frac{P_1(Y\in R\mid \mu)}{P_1(Y\in S\mid \mu)} \simeq \alpha,\quad \forall\mu\in \partial H,
\eanum
where $R=\Rc(r,v_r)^c$ and $v_r = \bar\Phi^{-1}(\alpha \bar\Phi(v_s))$.
The function $r$ is solved as
\banum\label{eq:unbias-rej-surface}
r(u)\simeq E_{-1}h(u)+\alpha C\{ s(u)-E_{-1}h(u) \},
\eanum
where $C=\phi(v_s)/\phi(v_r)$.
We also have an expression of $\alpha$ as
\banum\label{eq:rej-surface-alpha}
 \frac{\bar\Phi(v_r-r(u)+E_{-1}h(u))}{ \bar\Phi(v_s-s(u)+E_{-1}h(u))} \simeq \alpha.
\eanum
\end{lemma}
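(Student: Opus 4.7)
The plan is to mirror the strategy of Theorem~\ref{theorem:au-surface}, but using the nearly-flat machinery of Lemma~\ref{lemma:geometry-of-bp:NFS} in place of the second-order expansion.

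First I would parametrize $\mu\in\partial H$ as $\mu = (\theta, -h(\theta))$ and, for $\sigma^2=1$, apply Lemma~\ref{lemma:geometry-of-bp:NFS} to the regions $S^c = \Rc(s,v_s)$ and $R^c = \Rc(r,v_r)$ at the point $\mu$. This yields
\banum
P_1(Y\in S\mid\mu) &\simeq \bar\Phi\bigl(v_s + h(\theta) - E_1 s(\theta)\bigr),\\
P_1(Y\in R\mid\mu) &\simeq \bar\Phi\bigl(v_r + h(\theta) - E_1 r(\theta)\bigr).
\eanum
Substituting these into the unbiasedness condition (\ref{eq:selec-rej-prob-nfs}) reduces the problem to finding a nearly flat $r$ such that
\ba
\bar\Phi\bigl(v_r + h(\theta) - E_1 r(\theta)\bigr) \simeq \alpha\,\bar\Phi\bigl(v_s + h(\theta) - E_1 s(\theta)\bigr),\quad \forall\theta\in\Rb^m.
\ea

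Next I would Taylor-expand both sides via (\ref{eq:phi-taylor}) around $v_r$ and $v_s$ respectively, keeping only first-order terms in the nearly-flat magnitude $\lambda$. Using the defining identity $\bar\Phi(v_r) = \alpha\bar\Phi(v_s)$, the $O(1)$ contributions cancel and we obtain $\phi(v_r)\bigl(h(\theta) - E_1 r(\theta)\bigr) \simeq \alpha\,\phi(v_s)\bigl(h(\theta) - E_1 s(\theta)\bigr)$. Invoking $C=\phi(v_s)/\phi(v_r)$, this rearranges to
\ba
E_1 r(\theta) \simeq (1-\alpha C)\, h(\theta) + \alpha C\, E_1 s(\theta).
\ea
Now I would apply the inverse Gaussian filter $E_{-1}=E_1^{-1}$ to both sides. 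The assumption that $E_{-1}h$ exists and is nearly flat is what licenses this step, and since $s$ is already nearly flat, $E_{-1}E_1 s = s$ is well-defined. This produces (\ref{eq:unbias-rej-surface}):
\ba
r(u) \simeq (1-\alpha C)\,E_{-1}h(u) + \alpha C\, s(u) = E_{-1}h(u) + \alpha C\bigl\{s(u) - E_{-1}h(u)\bigr\},
\ea
and since $E_{-1}h$, $s$ are nearly flat, so is $r$.

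Finally, to derive the alternative expression (\ref{eq:rej-surface-alpha}), I would substitute the solved formula for $r$ into $v_r - r(u) + E_{-1}h(u)$, simplifying to $v_r - \alpha C\,(s(u) - E_{-1}h(u))$, and then Taylor-expand $\bar\Phi$ around $v_r$ and separately expand the denominator $\bar\Phi(v_s - s(u) + E_{-1}h(u))$ around $v_s$; the ratio collapses to $\alpha$ after another use of $\bar\Phi(v_r)=\alpha\bar\Phi(v_s)$ and $\phi(v_s)=C\phi(v_r)$. The main obstacle I anticipate is purely analytic: justifying the application of $E_{-1}$ term by term, which requires that the Fourier multiplier $e^{\|\omega\|^2/2}\tilde h(\omega)$ be integrable—this is exactly the hypothesis that $E_{-1}h$ exists and is nearly flat, and it is the reason this must be stated as a separate assumption rather than derived from near-flatness of $h$ alone.
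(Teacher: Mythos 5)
Your proof takes essentially the same route as the paper's: parametrize $\mu\in\partial H$ by $(\theta,-h(\theta))$, expand the two selection/rejection probabilities via Lemma~\ref{lemma:geometry-of-bp:NFS} at $\sigma^2=1$, linearize with (\ref{eq:phi-taylor}), cancel the $O(1)$ terms using $\bar\Phi(v_r)=\alpha\bar\Phi(v_s)$, solve for $E_1 r$, and apply the inverse filter $E_{-1}$ to obtain (\ref{eq:unbias-rej-surface}); the derivation of (\ref{eq:rej-surface-alpha}) is likewise the same reverse Taylor step. The only thing the paper does that you omit is the closing remark that, having derived $r$ under the a priori assumption that it is nearly flat, one substitutes it back and re-traces the computation to confirm it actually satisfies (\ref{eq:selec-rej-prob-nfs}); since every step of the argument is a chain of $\simeq$ equivalences this is routine, but it is worth stating.
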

\begin{proof}
See Section~\ref{sec:proof-rej-surface-nfs} (supplementary material).
\end{proof}

When $y=(0,v_r-r(u))$ is observed, we have $y\in \partial R$ and an approximately unbiased $p$-value for $y$ is set as $\alpha$.
We define a selective $p$-value $p_{\mathrm{SI}}(H | S, y)$ by using $\alpha$ in (\ref{eq:rej-surface-alpha}), that is, 
$p_{\mathrm{SI}}(H | S, y)\simeq \alpha$.
Although several unknown quantities $v_r-r(u)$, $v_s-s(u)$ and $E_{-1}h(u)$ appear in the definition of $p_{\mathrm{SI}}(H | S, y)$,
we can compute these quantities by using bootstrap probabilities $\alpha_{\sigma^2}(H | y)$ and $\alpha_{\sigma^2}(S | y)$.
The following theorem shows that the $p$-value computed by (A) of Algorithm~\ref{alg:simbp} 
is unbiased ignoring $O(\lambda^2)$ terms.

\begin{theorem} \label{theorem:au-pvalue-nfs}
Suppose the assumptions in Lemma~\ref{lemma:rej-surface-nfs} hold.
Also suppose that
the functional forms of $\psi_{\sigma^2}(H | y)$ and $\psi_{\sigma^2}(S | y)$ 
can be extrapolated to $\sigma^2=-1$ and $\sigma^2=0$, respectively.
We define a selective $p$-value by
\begin{equation} \label{eq:au-pvalue-nfs}
	p_{\mathrm{SI}}(H | S, y) = \frac{\bar\Phi(\psi_{-1}(H | y))} { \bar\Phi( \psi_{-1}(H | y) + \psi_0(S | y))}.
\end{equation}
For given significance level $\alpha$, we set the rejection region by $R=\{y \in \Rb^{m+1}\mid p_{\mathrm{SI}}(H | S, y) <\alpha\}$.
Then, this $R$ is equivalent to that in Lemma~\ref{lemma:rej-surface-nfs} erring only $O(\lambda^2)$, and thus $R$ satisfies (\ref{eq:selec-rej-prob-nfs}).
\end{theorem}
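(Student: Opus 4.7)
The plan is to apply Lemma~\ref{lemma:geometry-of-bp:NFS} to both $H$ and $S$, extrapolate the resulting expressions for $\psi_{\sigma^2}$ to $\sigma^2=-1$ and $\sigma^2=0$ respectively, and then show that the resulting inequality $p_{\mathrm{SI}}(H|S,y)<\alpha$ cuts out the same region as the $r(u)$ characterized in Lemma~\ref{lemma:rej-surface-nfs}, up to $O(\lambda^2)$.

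First I would write $y=(u,v)\in\Rb^{m+1}$ and apply Lemma~\ref{lemma:geometry-of-bp:NFS} with $v_h=0$ to obtain $\psi_{\sigma^2}(H|y)\simeq v+\Esig h(u)$. For the complementary region $S=\Rc(s,v_s)^c$, I would use $\alpha_{\sigma^2}(S|y)=1-\alpha_{\sigma^2}(\Rc(s,v_s)|y)$ together with the same lemma applied to $\Rc(s,v_s)$ and the identity $1-\bPhi(x)=\bPhi(-x)$, to get $\psi_{\sigma^2}(S|y)\simeq v_s-v-\Esig s(u)$. The assumed extrapolability, combined with the hypothesis of Lemma~\ref{lemma:rej-surface-nfs} that $E_{-1}h$ exists and is nearly flat, then yields
\[
\psi_{-1}(H|y)\simeq v+E_{-1}h(u),\qquad \psi_0(S|y)\simeq v_s-s(u)-v,
\]
using $E_0 s = s$ for the second identity.

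Substituting these two expressions into the definition (\ref{eq:au-pvalue-nfs}) would give
\[
p_{\mathrm{SI}}(H|S,y)\simeq \frac{\bPhi(v+E_{-1}h(u))}{\bPhi(v_s-s(u)+E_{-1}h(u))},
\]
since the denominator argument simplifies to $v_s-s(u)+E_{-1}h(u)$. Comparing with the identity (\ref{eq:rej-surface-alpha}) of Lemma~\ref{lemma:rej-surface-nfs}, this ratio equals $\alpha$ precisely when $v=v_r-r(u)$. Hence the boundary of $R=\{y:p_{\mathrm{SI}}(H|S,y)<\alpha\}$ coincides, up to $O(\lambda^2)$, with the surface $v=v_r-r(u)$ that bounds $\Rc(r,v_r)^c$. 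By monotonicity of $\bPhi$, the inequality $p_{\mathrm{SI}}(H|S,y)<\alpha$ is equivalent to $v>v_r-r(u)$ up to the same order, so $R=\Rc(r,v_r)^c$ as rejection sets, and (\ref{eq:selec-rej-prob-nfs}) then follows directly from Lemma~\ref{lemma:rej-surface-nfs}.

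The main obstacle I anticipate is bookkeeping the $O(\lambda^2)$ error through a ratio of two $\bPhi$ values and then through the level-set construction. Since both numerator and denominator have arguments of order $O(1)$ and $\bPhi$ is smooth there, a first-order Taylor expansion of $\bPhi$ shows that an $O(\lambda^2)$ perturbation in each argument induces only an $O(\lambda^2)$ perturbation in the ratio, which is compatible with the claimed accuracy. A closely related subtlety is that the extrapolated value $\psi_{-1}(H|y)$ produced by Algorithm~\ref{alg:simbp} truly equals $v+E_{-1}h(u)$ up to $O(\lambda^2)$; this is ensured by the assumption on extrapolability together with the scaling law of Lemma~\ref{lemma:geometry-of-bp:NFS}, which the parametric models in Section~\ref{sec:models} are designed to capture.
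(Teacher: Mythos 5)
Your proposal is correct and follows essentially the same route as the paper's own proof: apply Lemma~\ref{lemma:geometry-of-bp:NFS} to express $\psi_{\sigma^2}(H|y)$ and $\psi_{\sigma^2}(S|y)$ in terms of $E_{\sigma^2}h$ and $E_{\sigma^2}s$, extrapolate to $\sigma^2=-1$ and $\sigma^2=0$, substitute into (\ref{eq:au-pvalue-nfs}) to obtain the ratio $\bar\Phi(v+E_{-1}h(u))/\bar\Phi(v_s-s(u)+E_{-1}h(u))$, and match against (\ref{eq:rej-surface-alpha}) to identify $\partial R$ with $v=v_r-r(u)$, with the monotonicity of $\bar\Phi$ in $v$ giving the equivalence of rejection sets. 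The paper handles $S$ via the identity $\psi_{\sigma^2}(S|y)=-\psi_{\sigma^2}(S^c|y)$ where you go through $\alpha_{\sigma^2}(S|y)=1-\alpha_{\sigma^2}(\Rc(s,v_s)|y)$, but these are the same computation.
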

\begin{proof}
From Lemma~\ref{lemma:geometry-of-bp:NFS}, 
normalized bootstrap $z$-values $\psi_{\sigma^2}(H | y)$ and $\psi_{\sigma^2}(S | y)$ for $y=(u,v)$
can be expressed by
\banum\label{eq:psi-H&S-rep}
\psi_{\sigma^2}(H | y)\simeq v + E_{\sigma^2}h(u),\;
\psi_{\sigma^2}(S^c | y)\simeq v - v_s + E_{\sigma^2}s(u),
\eanum
respectively.
Then $\psi_{-1}(H | y)\simeq v + E_{-1}h(u)$ by extrapolating it to $\sigma^2=-1$.
By noting $E_0s(u)=s(u)$ and $\psi_{\sigma^2}(S^c | y)=-\psi_{\sigma^2}(S | y)$,
we have $\psi_{0}(S | y) \simeq -v+v_s-s(u)$.
By substituting them into (\ref{eq:au-pvalue-nfs}), we get an expression
\begin{equation}\label{eq:au-pvalue2-nfs}
   p_{\mathrm{SI}}(H | S, y)\simeq \frac{\bar\Phi(v + E_{-1}h(u))} { \bar\Phi( v_s-s(u) + E_{-1}h(u))}.
\end{equation}
Let $r$ and $R$ be those defined in Lemma~\ref{lemma:rej-surface-nfs}.
For $y\in \partial R$, $v=v_r-r(u)$ and then (\ref{eq:au-pvalue2-nfs}) coincides with (\ref{eq:rej-surface-alpha}).
Therefore $ p_{\mathrm{SI}}(H | S, y) \simeq \alpha$ on  $y\in \partial R$.
For $y = (u,v)\in\mathbb{R}^{m+1}$, by looking at the numerator of (\ref{eq:au-pvalue2-nfs}),
we get
$p_{\mathrm{SI}}(H | S, y) < \alpha \Leftrightarrow v + E_{-1}h(u) > v_r - r(u) + E_{-1}h(u)  \Leftrightarrow v > v_r - r(u) \Leftrightarrow y \in R$, where  $O(\lambda^2)$ terms are ignored.
\end{proof}

\subsection{A class of approximately unbiased tests for selective inference}

In Section~\ref{sec:au-p-value-nfs}, we assumed that the functional forms of $\psi_{\sigma^2}(H | y)$ and $\psi_{\sigma^2}(S | y)$ can be extrapolated to $\sigma^2=-1$ and $\sigma^2=0$, respectively.
Unfortunately, however, parametric models for cone-shaped regions, e.g., sing.$k$,
can only be defined for $\sigma^2>0$. This is in parallel with the argument of \cite{Lehmann:1952:TMH} 
that an unbiased test does not exist for a cone-shaped hypothesis region;
see also \cite{perlman1999emperor} for counter-intuitive illustrations.
On the other hand, Stone-Weierstrass theorem argues that
any continuous functions $h$ and $s$ can be approximated arbitrary well by polynomials within a bounded window on $u$.
From this point of view, the selective $p$-value using poly.$k$ (\ref{eq:polyk}) in (A) of Algorithm~\ref{alg:simbp}
becomes unbiased as $k\rightarrow \infty$ ignoring $O(\lambda^2)$ terms by taking a sufficiently large window,
although fitting of high-degree polynomials of $\sigma^2$, namely large $k$ in (\ref{eq:polyk}), can become unstable especially outside the range of fitted points for extrapolation.

In the same manner as \cite{Shimodaira08},
the method (B) in Algorithm~\ref{alg:simbp} considers truncated Taylor series expansion of $\psi_{\sigma^2}(H | y)$ with $k$ terms
at a positive $\sigma_{-1}^2>0$ as
\ba
\psi_{\sigma^2,k}(H | y,\sigma_{-1}^2)
=\sum_{j=0}^{k-1}
\frac{(\sigma^2-\sigma_{-1}^2)^j}{j!}
\frac{\partial^j \psi_{\sigma^2}(H | y)}{\partial (\sigma^2)^j}\Biggr|_{\sigma^2=\sigma_{-1}^2},
\ea
and similarly for  $\psi_{\sigma^2}(S | y)$ at $\sigma_{0}^2>0$.
Then we compute the selective $p$-value by
\banum\label{eq:taylor-pval-si}
p_{\mathrm{SI},k}(H | S,y)=
\frac{\bPhi(\psi_{-1,k}(H | y,\sigma_{-1}^2))}
{\bPhi( \psi_{0,k}(S | y,\sigma_0^2)+\psi_{-1,k}(H | y,\sigma_{-1}^2) )}.
\eanum
Although this method can be interpreted as the polynomial fitting, namely (A) in Algorithm~\ref{alg:simbp} with (\ref{eq:polyk}),
in small neighborhoods of $\sigma_{-1}^2$ and $\sigma_{0}^2$, 
it is more stable than the polynomial fitting when a wider range of $\sigma^2$ is used for model fitting.

The following theorem provides the theoretical justification for a class of general $p$-values 
including (\ref{eq:taylor-pval-si}).
\begin{theorem}\label{theo:gen-pval-si}
For nearly flat functions $h$ and $s$, and a constant $v_s\in \Rb$, 
we set $H=\Rc(h,0)$ and $S=\Rc(s,v_s)^c$ as the hypothesis and the selective regions, respectively.
For a given $0<\alpha<1$, let $v_r=\bar{\Phi}^{-1}(\alpha \bar{\Phi}(v_s))$ and 
$A(v)=A(v,v_s)=(\bar{\Phi}(v)\phi(v_s))/(\phi(v)\bar{\Phi}(v_s))$.
Let $I_k(\omega)$ and $J_k(\omega)$ denote functions satisfying the following three conditions:
\begin{description}
\item{(i) } $\lim_{k\rightarrow \infty}I_k(\omega)=0$ and $\lim_{k\rightarrow \infty}J_k(\omega)=0$ for each $\omega\in \Rb^m$,
\item{(ii) } $\exists C>0;\;\forall k\in \Nb;\;\|I_k(\omega)\|_\infty,\;\|J_k(\omega)\|_\infty < C$, and
\item{(iii) } $\forall k\in \Nb;\;\|e^{\|\omega\|^2/2}I_k(\omega)\|_\infty,\;\|(1-A(v_r)-J_k(\omega))e^{\|\omega\|^2/2}\|_\infty<\infty$.
\end{description}
Then $r_k(u)=r_k(u,v_r)$ exists, where it is defined by
\[ %\label{eq:gen-rej}
  r_k(u,v) = \Fc^{-1}\lsb
\{1-A(v)-J_k(\omega)\}e^{\|\omega\|^2/2}\tilde{h}(\omega) 
+
\{A(v)-e^{\|\omega\|^2/2}I_k(\omega)\}\tilde{s}(\omega)
\rsb.
\]
We consider a general $p$-value $p_{k}(H | S, y)$ which can be represented by
\banum\label{eq:gen-pval-si}
p_{k}(H | S, (u,v)) \simeq \frac{\bar\Phi\left(v+r_k(u)\right)}{\bar\Phi\left(v_s\right)} 
\;\text{ for }\; v=v_r+O(\lambda).
\eanum
Note that $r_k(u)$ in (\ref{eq:gen-pval-si}) can be replaced by $r_k(u,v)\simeq r_k(u)$ for $v = v_r + O(\lambda)$.
Then, we have, for $\mu=(\theta,-h(\theta))\in \partial H$,
\banum\label{eq:conv-gen-pval-si}
\frac{P(p_{k}(H | S, Y)<\alpha \mid \mu)}{P(Y\in S\mid \mu)}\rightarrow \alpha+O(\lambda^2)\;\text{ as }\; k\rightarrow \infty.
\eanum
at each $\theta\in \Rb^m$.

In addition to the conditions (i), (ii), (iii), 
we assume that $I_k$ and $J_k$ can be expressed by
\begin{description}
\item{(iv) } $I_k(\omega)=\sum_{j=k}^\infty a_{k,j}\|\omega\|^{2j},\;J_k(\omega)=\sum_{j=k}^\infty b_{k,j}\|\omega\|^{2j}$, respectively.
\end{description}
Then, if $h$ and $s$ are polynomials of degree less than or equal to $2k-1$, 
$p_{k}(H | S, y)$ is unbiased ignoring $O(\lambda^2)$ term.
\end{theorem}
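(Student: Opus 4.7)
The plan is to reduce Theorem~\ref{theo:gen-pval-si} to a Fourier-side identity analogous to the one built in Lemma~\ref{lemma:rej-surface-nfs}, and then to settle the two claims by the dominated convergence theorem and a distributional vanishing argument, respectively.

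First I would identify the rejection region and the exact unbiasedness condition. From (\ref{eq:gen-pval-si}) combined with $\bar\Phi(v_r)=\alpha\bar\Phi(v_s)$, the inequality $p_{k}(H|S,(u,v))<\alpha$ is equivalent, up to $O(\lambda^{2})$, to $v>v_r-r_k(u,v_r)$; so $R_k\simeq\Rc(r_k(\cdot,v_r),v_r)^c$, structurally identical to the rejection region of Lemma~\ref{lemma:rej-surface-nfs}. Applying Lemma~\ref{lemma:geometry-of-bp:NFS} with $\sigma^{2}=1$ at $\mu=(\theta,-h(\theta))\in\partial H$ gives
\[
P_1(Y\in R_k\mid\mu)\simeq\bar\Phi(v_r+\delta_r),\quad P_1(Y\in S\mid\mu)\simeq\bar\Phi(v_s+\delta_s),
\]
with $\delta_r=h(\theta)-E_{1}r_k(\theta,v_r)=O(\lambda)$ and $\delta_s=h(\theta)-E_{1}s(\theta)=O(\lambda)$. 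A first-order Taylor expansion of $\bar\Phi$ around $v_r$ and $v_s$, together with the definition of $A(v_r)$, reduces the ratio to $\alpha\bigl(1-(\phi(v_r)/\bar\Phi(v_r))\delta_r+(\phi(v_s)/\bar\Phi(v_s))\delta_s\bigr)+O(\lambda^{2})$. So the ratio equals $\alpha+O(\lambda^{2})$ exactly when $E_{1}r_k(\theta,v_r)=(1-A(v_r))h(\theta)+A(v_r)E_{1}s(\theta)$; call this target relation $(\star)$.

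Next I would compute $E_{1}r_k$ directly from its definition. Since $E_{1}$ acts on the Fourier side by multiplication with $e^{-\|\omega\|^{2}/2}$, the amplifying exponentials attached to $\tilde h$ and $\tilde s$ inside $\tilde r_k(\omega,v_r)$ cancel exactly against this factor, leaving
\[
E_{1}r_k(\theta,v_r)=(1-A(v_r))h(\theta)+A(v_r)E_{1}s(\theta)-\mathcal{F}^{-1}[J_k\tilde h](\theta)-\mathcal{F}^{-1}[I_k\tilde s](\theta).
\]
So the deviation from $(\star)$ is exactly $\mathcal{F}^{-1}[J_k\tilde h]+\mathcal{F}^{-1}[I_k\tilde s]$. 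To obtain (\ref{eq:conv-gen-pval-si}), I apply dominated convergence to these inversion integrals: condition~(i) supplies pointwise convergence of the integrands to $0$; condition~(ii) gives uniform majorants $C|\tilde h|,C|\tilde s|\in L^{1}$ because $h,s$ are nearly flat; and condition~(iii) guarantees that each finite-$k$ inversion is well defined. Both error terms then vanish as $k\to\infty$, and feeding this back into the Taylor expansion produces the claim. For the polynomial exactness claim, condition~(iv) implies that all partial derivatives of $J_k$ and $I_k$ of order up to $2k-1$ vanish at $\omega=0$. When $h$ and $s$ are polynomials of degree at most $2k-1$, their Fourier transforms are tempered distributions supported at the origin, i.e., finite linear combinations of partial derivatives of $\delta_{0}$ of order at most $2k-1$; pairing them against $J_k$ or $I_k$ returns corresponding derivatives at $\omega=0$, all zero. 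Therefore $J_k\tilde h\equiv 0\equiv I_k\tilde s$, $(\star)$ holds exactly, and $p_k$ is unbiased up to $O(\lambda^{2})$.

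The main obstacle is making the inverse Fourier transforms rigorous in the presence of the exponential amplifiers $e^{\|\omega\|^{2}/2}$: ordinary $L^{1}$ inversion fails, and condition~(iii) combined with $\tilde h,\tilde s\in L^{1}$ must be invoked carefully to keep the integrands admissible at each $k$. A related subtlety is that polynomials are not themselves nearly flat; I would either handle the polynomial case strictly distributionally as above, or, in the spirit of \cite{Shimodaira08}, approximate each polynomial by a nearly flat function on an arbitrarily large window and then pass to the limit.
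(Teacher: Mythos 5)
Your proposal is correct and follows essentially the same route as the paper's own proof: both identify the ratio's deviation from $\alpha$ as a bias term whose Fourier transform is $J_k(\omega)\tilde{h}(\omega)+I_k(\omega)\tilde{s}(\omega)$ (you obtain it by expanding $E_1 r_k$ directly, the paper by subtracting a target $r_{\mathrm{SI}}$), then both invoke dominated convergence via conditions (i)--(ii) for the limit, and a derivative-of-delta support argument under condition (iv) for the polynomial case. The only minor omission is that you do not spell out the $\|\tilde{r}_k\|_1<\infty$ estimate from condition (iii) establishing existence of $r_k$, but you do flag that (iii) is what makes the inversions well defined, and the rest of your argument does not depend on more than that.
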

\begin{proof}
See Section~\ref{sec:proof-gen-pval-si} (supplementary material).
\end{proof}

Using this theorem, we can establish theoretical guarantees for 
our approach using the truncated Taylor series expansion, and also for the iterated bootstrap described in Section~\ref{sec:iterated-bootstrap}.

\begin{col}\label{col:taylor-ibp-pval-si}
For nearly flat functions $h$ and $s$, define $H$, $S$ and $A(v_r)$ as in Theorem~\ref{theo:gen-pval-si}.
Then,
the $p$-value $p_{\mathrm{SI},k}(H | S,y)$ defined by (\ref{eq:taylor-pval-si}) satisfies
(\ref{eq:conv-gen-pval-si}).
We also assume that $h$ is Lipschitz continuous with Lipschitz constant $K(\lambda)=O(\lambda)$ for 
the $p$-value $p_{\mathrm{BP},k}(H | S,y)$ defined by (\ref{eq:iterated-bp-k}) and (\ref{eq:p-bp-1}) for $\sigma^2>0$.
Then, $p_{\mathrm{BP},k}(H | S,y)$ satisfies (\ref{eq:conv-gen-pval-si}).
In addition to above conditions, we further assume that
$h$ and $s$ can be represented by polynomials of degree less than or equal to $2k-1$.
Then $p_{\mathrm{SI},k}(H | S,y)$ and $p_{\mathrm{BP},k}(H | S,y)$ are
unbiased ignoring $O(\lambda^2)$ term.
\end{col}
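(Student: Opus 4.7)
The plan is to reduce both claims to Theorem~\ref{theo:gen-pval-si} by exhibiting, for each of the two $p$-values, an explicit pair of Fourier symbols $I_k(\omega)$, $J_k(\omega)$ that meet conditions (i)--(iv) and produce the $r_k(u,v_r)$ demanded by the theorem. The starting point is Lemma~\ref{lemma:geometry-of-bp:NFS}, which gives, up to $O(\lambda^2)$,
\[
\psi_{\sigma^2}(H|y)\simeq v+\mathcal{F}^{-1}\bigl[e^{-\sigma^2\|\omega\|^2/2}\tilde h\bigr](u),\qquad
\psi_{\sigma^2}(S|y)\simeq -v+v_s-\mathcal{F}^{-1}\bigl[e^{-\sigma^2\|\omega\|^2/2}\tilde s\bigr](u).
\]
For $p_{\mathrm{SI},k}$, the Taylor polynomial $\psi_{-1,k}(H|y,\sigma_{-1}^2)$ has Fourier symbol $e^{-\sigma_{-1}^2\|\omega\|^2/2}\sum_{j=0}^{k-1}((1+\sigma_{-1}^2)\|\omega\|^2/2)^j/j!$ acting on $\tilde h$, and subtracting this from $e^{\|\omega\|^2/2}\tilde h=\mathcal{F}E_{-1}h$ yields
\[
\psi_{-1,k}(H|y,\sigma_{-1}^2)\simeq v+E_{-1}h(u)-\mathcal{F}^{-1}\bigl[e^{\|\omega\|^2/2}J^\sharp_k(\omega)\tilde h\bigr](u),
\]
with $J^\sharp_k(\omega):=1-e^{-c\|\omega\|^2/2}\sum_{j=0}^{k-1}(c\|\omega\|^2/2)^j/j!$ and $c=1+\sigma_{-1}^2$; the same manipulation with $c=\sigma_0^2$ gives $\psi_{0,k}(S|y,\sigma_0^2)\simeq -v+v_s-s(u)+\mathcal{F}^{-1}[I^\sharp_k\tilde s](u)$.

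Next I would linearize the ratio (\ref{eq:taylor-pval-si}) around $(v_r,v_s)$ using $\bar\Phi(v_r+\delta_1)/\bar\Phi(v_s+\delta_2)\simeq\alpha\bigl(1-\phi(v_r)\delta_1/\bar\Phi(v_r)+\phi(v_s)\delta_2/\bar\Phi(v_s)\bigr)$ and match coefficients against the target form $\bar\Phi(v+r_k(u))/\bar\Phi(v_s)$. This matching forces the choice
\[
J_k(\omega)=(1-A(v_r))\,J^\sharp_k(\omega),\qquad I_k(\omega)=A(v_r)\,e^{-\|\omega\|^2/2}\,I^\sharp_k(\omega),
\]
after which conditions (i)--(iii) follow from the observation that $e^{-c\|\omega\|^2/2}\sum_{j=0}^{k-1}(c\|\omega\|^2/2)^j/j!$ equals $P(\mathrm{Poi}(c\|\omega\|^2/2)\le k-1)\in[0,1]$ and tends pointwise to $1$ as $k\to\infty$, so $I^\sharp_k,J^\sharp_k$ are uniformly bounded and vanish pointwise; meanwhile the prefactor $e^{-\|\omega\|^2/2}$ gives $e^{\|\omega\|^2/2}I_k=A(v_r)I^\sharp_k$ bounded, and the identity $e^{\|\omega\|^2/2}(1-A(v_r)-J_k)=(1-A(v_r))e^{-\sigma_{-1}^2\|\omega\|^2/2}\sum_{j=0}^{k-1}(\cdots)^j/j!$ gives the remaining $L^\infty$ bound in (iii). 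Condition (iv), driving the polynomial unbiasedness, rests on the combinatorial identity $\sum_{j=0}^{n}(-1)^{n-j}/(j!(n-j)!)=0$ for $1\le n\le k-1$, which shows $I^\sharp_k,J^\sharp_k$ have Taylor series in $\|\omega\|^2$ starting at $\|\omega\|^{2k}$, a property preserved under multiplication by $e^{-\|\omega\|^2/2}$ and constants. Theorem~\ref{theo:gen-pval-si} then yields (\ref{eq:conv-gen-pval-si}) for $p_{\mathrm{SI},k}$, and in the polynomial case $E_{\sigma^2}h=\sum_{j=0}^{k-1}\sigma^{2j}\beta_j(u)$ is already a polynomial of degree $k-1$ in $\sigma^2$, making the truncated Taylor polynomial exact, so $I_k\tilde s\equiv J_k\tilde h\equiv 0$ and the concluding clause of the theorem produces unbiasedness up to $O(\lambda^2)$.

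For $p_{\mathrm{BP},k}$ the scheme is the same but the recursion (\ref{eq:iterated-bp-k}) must be unrolled. Starting from (\ref{eq:p-bp-1}), I would approximate the denominator $P_1(Y^*\in S\mid\hat\mu)\simeq\bar\Phi(v_s-s(u))+O(\lambda^2)$ using that the projection satisfies $\hat\mu=(u,0)+O(\lambda)$ together with the Lipschitz hypothesis $K(\lambda)=O(\lambda)$, which keeps the $O(\lambda)$ displacement of $\hat\mu$ from producing more than $O(\lambda^2)$ error in $s(\hat\mu)$. Each subsequent iteration composes another Gaussian low-pass filter and contributes a Poisson-tail factor, so after $k-1$ steps the resulting $p$-value again has the form (\ref{eq:gen-pval-si}) with $I_k,J_k$ built from nested Poisson-CDF symbols; these vanish pointwise and satisfy (i)--(iv) for the same structural reasons, and in the polynomial case they collapse to zero, triggering unbiasedness. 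The main obstacle I expect is precisely this unrolling: unlike $p_{\mathrm{SI},k}$, which is explicit in $y$ after one Fourier inversion, $p_{\mathrm{BP},k}$ is defined recursively through probabilities of indicator events, and propagating $O(\lambda^2)$ accuracy across $k-1$ nested projections and linearizations---while keeping the exact Fourier symbol at each step---is delicate; the Lipschitz hypothesis on $h$ is exactly what prevents the projection error from leaking into the leading $O(\lambda)$ order.
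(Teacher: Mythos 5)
Your reduction to Theorem~\ref{theo:gen-pval-si} by exhibiting explicit Fourier symbols $I_k,J_k$ is exactly the paper's strategy, which packages the two cases into Lemma~\ref{lemma:taylor-pval-si} and Lemma~\ref{lemma:ibp-pval-si}. For $p_{\mathrm{SI},k}$ you have it right: your $J^\sharp_k$ is precisely $G_k(\omega\mid -1,\sigma_{-1}^2)=\gamma(k,(1+\sigma_{-1}^2)\|\omega\|^2/2)/\Gamma(k)$, your Poisson-CDF observation supplies (i) and (ii), your identity for $e^{\|\omega\|^2/2}(1-A(v_r)-J_k)$ gives (iii), and the vanishing-to-order-$\|\omega\|^{2k}$ of the incomplete-gamma tail gives (iv); the paper does the same via the incomplete gamma function rather than the Poisson interpretation, but that is cosmetic.

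The $p_{\mathrm{BP},k}$ part, however, is not actually carried out, and your description of what the unrolling produces is wrong. The iterated bootstrap does not yield ``nested Poisson-CDF symbols.'' Each iteration contributes a factor $(1-e^{-\|\omega\|^2/2})$, i.e.\ the Fourier symbol of $\mathrm{id}-E_1$, and solving the recurrence
\[
\tilde r_{k+1}(\omega)\simeq(1-e^{-\|\omega\|^2/2})\tilde r_k(\omega)+\tilde h(\omega)-A(v_r)\bigl\{\tilde h(\omega)-e^{-\|\omega\|^2/2}\tilde s(\omega)\bigr\}
\]
gives the geometric-type symbols
\[
J_k(\omega)=(1-e^{-\|\omega\|^2/2})^{k-1}\bigl\{1-e^{-(1+\sigma^2)\|\omega\|^2/2}-(1-e^{-\|\omega\|^2/2})A(v_r)\bigr\},\quad
I_k(\omega)=(1-e^{-\|\omega\|^2/2})^{k}e^{-\|\omega\|^2/2}A(v_r),
\]
which are not of incomplete-gamma form. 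Without actually solving this recurrence, you cannot verify (iii) (which hinges on the sign of the exponent after multiplying by $e^{\|\omega\|^2/2}$ and requires $\sigma^2>0$, not merely $\sigma^2>-1$) or (iv) (which uses that $(1-e^{-\|\omega\|^2/2})^{k-1}$ vanishes to order $\|\omega\|^{2(k-1)}$ and that the bracketed factor contributes one more order). You correctly flagged the unrolling as the delicate step and correctly identified the role of the Lipschitz hypothesis in keeping the projection's $u$-displacement from polluting the $O(\lambda)$ order, so the gap is one of execution rather than strategy; but as written, the $p_{\mathrm{BP},k}$ half of the corollary is unproved.
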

\begin{proof}
The results are immediate consequences of Theorem~\ref{theo:gen-pval-si} by knowing
that $p_{\mathrm{SI},k}(H | S,y)$ and $p_{\mathrm{BP},k}(H | S,y)$ satisfy the conditions (i), (ii), (iii), (iv)
as shown below in Lemma~\ref{lemma:taylor-pval-si} and Lemma~\ref{lemma:ibp-pval-si}, respectively.
\end{proof}

The following lemma shows the correspondence 
between $p_{\mathrm{SI},k}(H | S,y)$ and a general $p$-value $p_{k}(H | S,y)$ in Theorem~\ref{theo:gen-pval-si}.

\begin{lemma}\label{lemma:taylor-pval-si}
Assume the same conditions as in Corollary~\ref{col:taylor-ibp-pval-si}.
Then the $p$-value $p_{\mathrm{SI},k}(H | S,y)$ can be represented as (\ref{eq:gen-pval-si}) 
using the following $I_k$ and $J_k$:
\ba
J_k(\omega)
=
(1-A(v_r))G_{k}(\omega| -1,\sigma_{-1}^2),\quad
I_k(\omega)
=
A(v_r)e^{-\|\omega\|^2/2}G_{k}(\omega|0,\sigma_{0}^2),
\ea
where, for $\sigma_a^2 \le \sigma_b^2$,
\[
G_{k}(\omega | \sigma_a^2,\sigma_{b}^2)
=
\frac{\gamma(k,(\sigma_{b}^2-\sigma_{a}^2)\|\omega\|^2/2)}{\Gamma(k)}
=
\sum_{j=k}^\infty\frac{(-1)^{j-k}}{(k-1)!(j-k)!j} \frac{(\sigma_{b}^2-\sigma_{a}^{2})^j\|\omega\|^{2j}}{2^j},
\]
and $\gamma(n,z)=\int_{0}^z t^{n-1}e^{-t}\,dt$ is the lower incomplete gamma function.
The above $I_k$ and $J_k$ satisfy the conditions {\rm (i)-(iv)} in Theorem~\ref{theo:gen-pval-si}.
\end{lemma}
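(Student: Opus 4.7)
The plan proceeds in three stages: first, obtain Fourier expressions for $\psi_{-1,k}(H|y,\sigma_{-1}^2)$ and $\psi_{0,k}(S|y,\sigma_0^2)$ in which the truncated Taylor remainder is identified with the regularized incomplete gamma function; second, linearize the ratio defining $p_{\mathrm{SI},k}$ about $v=v_r$ to read off $r_k(u)$; third, verify conditions (i)--(iv) directly.

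For the first stage, I would apply Lemma~\ref{lemma:geometry-of-bp:NFS} together with $\psi_{\sigma^2}(S|y) = -\psi_{\sigma^2}(S^c|y)$ to get $\psi_{\sigma^2}(H|y) \simeq v + \Fc^{-1}[e^{-\sigma^2\|\omega\|^2/2}\tilde h(\omega)](u)$ and $\psi_{\sigma^2}(S|y) \simeq -v+v_s-\Fc^{-1}[e^{-\sigma^2\|\omega\|^2/2}\tilde s(\omega)](u)$. Factoring $e^{-\sigma^2\|\omega\|^2/2} = e^{-\sigma_a^2\|\omega\|^2/2}\,e^{-(\sigma^2-\sigma_a^2)\|\omega\|^2/2}$ and Taylor-expanding the second factor with $k$ terms about $\sigma_a^2 \in \{\sigma_{-1}^2,\sigma_0^2\}$, the classical identity $1 - e^{-x}\sum_{j=0}^{k-1}x^j/j! = \gamma(k,x)/\Gamma(k)$ applied at $x = (\sigma_a^2-\sigma_b^2)\|\omega\|^2/2$ matches the Taylor remainder against $G_k(\omega|\sigma_b^2,\sigma_a^2)$. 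Setting $\sigma_b^2 = -1$ and $\sigma_b^2 = 0$ respectively yields
\[
\psi_{-1,k}(H|y,\sigma_{-1}^2) \simeq v + \Fc^{-1}[(1-G_k(\omega|-1,\sigma_{-1}^2))e^{\|\omega\|^2/2}\tilde h(\omega)](u),
\]
\[
\psi_{0,k}(S|y,\sigma_0^2) \simeq -v + v_s - \Fc^{-1}[(1-G_k(\omega|0,\sigma_0^2))\tilde s(\omega)](u).
\]

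For the second stage, abbreviate $\psi_{-1,k}(H|y,\sigma_{-1}^2)=v+z(u)$ and $\psi_{-1,k}(H|y,\sigma_{-1}^2)+\psi_{0,k}(S|y,\sigma_0^2)=v_s+w(u)$; since $h,s$ are nearly flat, $z,w = O(\lambda)$. Linearizing $\bar\Phi$ at $v$ and $v_s$ via (\ref{eq:phi-taylor}) produces
\[
p_{\mathrm{SI},k}(H|S,y) \simeq \frac{\bar\Phi(v)}{\bar\Phi(v_s)}\Bigl(1 - \frac{\phi(v)}{\bar\Phi(v)}z + \frac{\phi(v_s)}{\bar\Phi(v_s)}w\Bigr),
\]
and matching against $\bar\Phi(v+r_k(u))/\bar\Phi(v_s) \simeq (\bar\Phi(v)/\bar\Phi(v_s))(1-\phi(v)r_k(u)/\bar\Phi(v))$ yields $r_k(u) \simeq z - A(v)w$. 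Substituting the Fourier expressions for $z$ and $w$ gives
\[
r_k(u) \simeq \Fc^{-1}\bigl[(1-A(v))(1-G_k(\omega|-1,\sigma_{-1}^2))e^{\|\omega\|^2/2}\tilde h + A(v)(1-G_k(\omega|0,\sigma_0^2))\tilde s\bigr](u).
\]
Since $v=v_r+O(\lambda)$, replacing $A(v)$ by $A(v_r)$ in the prefactors costs $(A(v)-A(v_r))\cdot O(\lambda)=O(\lambda^2)$, which is absorbed into $\simeq$; this collapses to the definition of $r_k(u,v)$ with $J_k(\omega)=(1-A(v_r))G_k(\omega|-1,\sigma_{-1}^2)$ and $I_k(\omega)=A(v_r)e^{-\|\omega\|^2/2}G_k(\omega|0,\sigma_0^2)$.

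For the third stage, I would verify (i)--(iv) directly. Condition (i) holds because $P(k,x):=\gamma(k,x)/\Gamma(k)=\Pb[\mathrm{Gamma}(k,1)\le x]\to 0$ as $k\to\infty$ for each fixed $x$. Condition (ii) follows from $G_k\in[0,1]$, giving $\|J_k\|_\infty\le|1-A(v_r)|$ and $\|I_k\|_\infty\le A(v_r)$. For (iii), $e^{\|\omega\|^2/2}I_k(\omega) = A(v_r)G_k(\omega|0,\sigma_0^2)\in[0,A(v_r)]$, and
\[
(1-A(v_r)-J_k(\omega))e^{\|\omega\|^2/2} = (1-A(v_r))e^{-\sigma_{-1}^2\|\omega\|^2/2}\sum_{j=0}^{k-1}\frac{((1+\sigma_{-1}^2)\|\omega\|^2/2)^j}{j!}
\]
is bounded because $\sigma_{-1}^2>0$ makes Gaussian decay dominate polynomial growth. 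Condition (iv) is immediate: $G_k(\omega|\sigma_a^2,\sigma_b^2)$ is a convergent power series in $\|\omega\|^2$ starting at order $2k$, and multiplication by $e^{-\|\omega\|^2/2}$ (a power series in $\|\omega\|^2$ starting at order $0$) preserves this starting order in $I_k$. The main obstacle will be the bookkeeping in the second stage: carefully justifying that every substitution introduces only $O(\lambda^2)$ error, in particular the Taylor expansion of $\bar\Phi$ at both $v$ and $v_s$ and the replacement $A(v)\to A(v_r)$; the incomplete-gamma identification in the first stage is elementary but requires matching signs and Gaussian prefactors carefully.
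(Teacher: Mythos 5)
Your proposal is correct and follows essentially the same route as the paper: Fourier-transform the truncated Taylor expansion of $\psi_{\sigma^2}$, identify the truncation remainder with the regularized incomplete gamma function $\gamma(k,x)/\Gamma(k)$, linearize the $\bar\Phi$ ratio about $(v_r,v_s)$ to read off $r_k(u)$, and then verify (i)--(iv). The only cosmetic differences are that you rederive the linearization step inline instead of invoking the paper's intermediate display and that you verify (i) via the probabilistic interpretation of $P(k,x)$ rather than Stirling's approximation; both substitutions are valid and arguably cleaner.
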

\begin{proof}
See Section~\ref{sec:proof-taylor-pval-si} (supplementary material).
\end{proof}

The iterated bootstrap in Corollary~\ref{col:taylor-ibp-pval-si} is discussed in parallel with
Theorem~\ref{theorem:double-bootstrap} in Section~\ref{sec:iterated-bootstrap}.
The next result provides the connection between $p_{\mathrm{BP},k}(H | S,y)$ and a general $p$-value $p_{k}(H | S,y)$ in Theorem~\ref{theo:gen-pval-si}.

\begin{lemma}\label{lemma:ibp-pval-si}
Assume the same conditions as in Corollary~\ref{col:taylor-ibp-pval-si}.
Then
$p_{\mathrm{BP},k}(H | S,y)$ defined by (\ref{eq:iterated-bp-k}) and (\ref{eq:p-bp-1})  for $\sigma^2>0$ can be represented as (\ref{eq:gen-pval-si})
using the following $I_k$ and $J_k$:
\ba
J_k(\omega)
&=
( 1-e^{-\|\omega\|^2/2} )^{k-1}
\Bigl\{1-e^{-(1+\sigma^2)\|\omega\|^2/2}- (1-e^{-\|\omega\|^2/2}) A(v_r) \Bigr\},\\
I_k(\omega)
&=
(1-e^{-\|\omega\|^2/2})^{k}e^{-\|\omega\|^2/2}  A(v_r).
\ea
The above $I_k$ and $J_k$ satisfy the conditions {\rm (i)-(iv)} in Theorem~\ref{theo:gen-pval-si}.
\end{lemma}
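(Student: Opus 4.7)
The plan is to prove the Fourier representation of $p_{\mathrm{BP},k}(H \mid S, y)$ by induction on $k$, and then verify that the resulting $I_k, J_k$ satisfy conditions (i)-(iv) of Theorem~\ref{theo:gen-pval-si} by elementary estimates on $(1-e^{-\|\omega\|^2/2})$. The induction will piggyback on Lemma~\ref{lemma:geometry-of-bp:NFS}, which lets us replace every event of the form $Y^\ast \in \Rc(g,v_g)$ (with $g$ nearly flat) by an explicit tail probability involving $E_{\sigma^2}g$, whose Fourier multiplier is $e^{-\sigma^2\|\omega\|^2/2}$.

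For the base case $k=1$, I would apply Lemma~\ref{lemma:geometry-of-bp:NFS} twice: once to the numerator $p_{\sigma^2}(H \mid y) \simeq \bar\Phi(v + E_{\sigma^2}h(u))$, and once to the denominator $P_1(Y^\ast \in S \mid \hat\mu) \simeq \bar\Phi(h(u) + v_s - E_1 s(u))$ using $\hat\mu \simeq (u, -h(u))$ (the Lipschitz hypothesis with constant $O(\lambda)$ ensures $\proj(H\mid y) = (u,-h(u)) + O(\lambda^2)$, so replacing $\hat\mu$ by $(u,-h(u))$ is harmless to the order we track). Expanding $\bar\Phi$ to first order around $v_r$ and $v_s$ respectively, dividing out $\bar\Phi(v_s)$, and identifying coefficients of $\tilde h$ and $\tilde s$ in Fourier space produces the required $I_1, J_1$; in particular the coefficient $A(v_r) = \bar\Phi(v_r)\phi(v_s)/(\phi(v_r)\bar\Phi(v_s))$ falls out of the ratio automatically.

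For the inductive step, suppose $p_{\mathrm{BP},k}(H \mid S, y') \simeq \bar\Phi(v' + r_k^{(k)}(u'))/\bar\Phi(v_s)$ for all $y' = (u',v')$ with $v'=v_r + O(\lambda)$. Then the event $\{p_{\mathrm{BP},k}(H \mid S, Y^\ast) < p_{\mathrm{BP},k}(H \mid S, y)\}$ is equivalent to $V^\ast > (v + r_k^{(k)}(u)) - r_k^{(k)}(U^\ast)$, i.e., $Y^\ast \in \Rc(r_k^{(k)}, v + r_k^{(k)}(u))^c$. Applying Lemma~\ref{lemma:geometry-of-bp:NFS} once more to this region with starting point $\hat\mu\simeq(u,-h(u))$ at scale $\sigma^2=1$ and dividing by the denominator $\bar\Phi(h(u)+v_s - E_1 s(u))$, first-order Taylor expansion gives
\begin{equation*}
r_{k+1}^{(k+1)}(u) \simeq (1-A(v_r))\,h(u) + \bigl(r_k^{(k)}(u) - E_1 r_k^{(k)}(u)\bigr) + A(v_r)\, E_1 s(u).
\end{equation*}
Taking Fourier transforms, the multiplier $(1 - e^{-\|\omega\|^2/2})$ appears in front of $\tilde r_k^{(k)}(\omega)$, from which matching against the form in Theorem~\ref{theo:gen-pval-si} yields the clean recursions $I_{k+1}(\omega) = (1-e^{-\|\omega\|^2/2})\,I_k(\omega)$ and $J_{k+1}(\omega) = (1-e^{-\|\omega\|^2/2})\,J_k(\omega)$. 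Iterating from the base case gives exactly the stated formulas.

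For conditions (i)-(iv), note $0 \le 1 - e^{-\|\omega\|^2/2} < 1$ pointwise for $\omega\neq 0$, so (i) follows from $(1-e^{-\|\omega\|^2/2})^{k-1} \to 0$ and (ii) from the uniform bound $(1-e^{-\|\omega\|^2/2})^{k-1} \le 1$ together with boundedness of $A(v_r)$. For (iii), $e^{\|\omega\|^2/2} I_k(\omega) = (1-e^{-\|\omega\|^2/2})^k A(v_r)$ is bounded; for $(1 - A(v_r) - J_k(\omega))e^{\|\omega\|^2/2}$, the straightforward algebraic identity $1 - A(v_r) - J_k = (1-x)^k A(v_r) - (1-x)^{k-1} + (1-x)^{k-1}xy + (1-x-A(v_r))$ with $x = e^{-\|\omega\|^2/2}$, $y = e^{-\sigma^2\|\omega\|^2/2}$ shows this quantity is $O(e^{-\|\omega\|^2/2})$ as $\|\omega\|\to\infty$. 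Condition (iv) follows since $1 - e^{-\|\omega\|^2/2} = \|\omega\|^2/2 - \|\omega\|^4/8 + \cdots$ is a power series in $\|\omega\|^2$, so $(1-e^{-\|\omega\|^2/2})^{k-1}$ contributes a factor vanishing to order $\|\omega\|^{2(k-1)}$, and after multiplying by the bracketed term (which itself vanishes to order $\|\omega\|^2$ in $J_k$) we obtain a power series beginning at $\|\omega\|^{2k}$; an identical count works for $I_k$.

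The main obstacle will be the bookkeeping in the inductive step: one must argue that the first-order Taylor expansions used in passing between $\bar\Phi$ and its argument are uniformly valid (i.e., produce an $O(\lambda^2)$ error in $\psi$-coordinates, not merely pointwise), and that the Lipschitz-$O(\lambda)$ assumption on $h$ indeed absorbs the error from using $(u,-h(u))$ in place of the true projection $\proj(H\mid y)$ inside $E_1 r_k^{(k)}(\cdot)$ and $E_1 s(\cdot)$. Once this uniformity is established, the Fourier recursion is essentially algebraic.
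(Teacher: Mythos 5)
Your proof follows essentially the same route as the paper's: both establish the Fourier representation by induction, using Lemma~\ref{lemma:geometry-of-bp:NFS} plus the Lipschitz condition on $h$ to replace $\proj(H\mid y)$ by $(u,-h(u))$, and Lemma~\ref{lemma:phi-phi-taylor}/(\ref{eq:rep-trans}) to pass from the ratio of $\bar\Phi$'s to a single $r_{k+1}$; and both obtain the recursion $\tilde r_{k+1} = (1-e^{-\|\omega\|^2/2})\tilde r_k + \tilde h - A(v_r)\{\tilde h - e^{-\|\omega\|^2/2}\tilde s\}$ before passing to the clean multiplicative recursions $I_{k+1}=(1-e^{-\|\omega\|^2/2})I_k$, $J_{k+1}=(1-e^{-\|\omega\|^2/2})J_k$, matching the paper's observation that $I_k=(1-x)^{k-1}I_1$ and $J_k=(1-x)^{k-1}J_1$. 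The verification of (i)--(iv) uses the same pointwise estimates $0\le 1-e^{-\|\omega\|^2/2}<1$ and the power-series vanishing-order count. One small slip: in your algebraic identity for condition (iii) the last summand should read $(1-A(v_r))$ rather than $(1-x-A(v_r))$; the correct expansion is $1-A-J_k = (1-x)^k A - (1-x)^{k-1} + (1-x)^{k-1}xy + (1-A)$, which after dividing by $x$ still gives the desired boundedness for $\sigma^2>0$ since $(1-(1-x)^{k-1})/x$ is a polynomial in $x$, so the conclusion stands.
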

\begin{proof}
See Section~\ref{sec:proof-ibp-pval-si} (supplementary material).
\end{proof}
%%%%------------------------------------------------------------------------------------------------------------------------------
%%
%

\section{Concluding Remarks} \label{sec:conclusion}

The argument of multiscale bootstrap is generalized to the exponential family of distributions in \cite{Efron:Tibshirani:1998:PR} and \cite{Shimodaira:2004:AUT}, where the acceleration constant $\hat a$ of the ABC formula in \cite{Efron:1987:BBC} and \cite{Diciccio:Efron:1992:MAC} is considered.
$\hat a$ is interpreted as the rate of change of the covariance matrix in the normal direction to $\partial H$, and $\hat a =0$ in the fixed covariance matrix case.
In this paper, we ignore $\hat a$ by assuming the model (\ref{eq:model}) holds for $f_n(\Xc_n)$.
This assumption corresponds to $\Delta V=0$ in Section~\ref{sec:pvclust-fn} (supplementary material).
The value of $\hat a$ is relatively small in examples of \cite{Efron:Tibshirani:1998:PR} and \cite{Shimodaira:2004:AUT}, where several attempts have been made to estimate $\hat a$.
Computing $\hat a$ requires further theoretical and computational effort, and this is left as a future work.

There are several ways to deal with multiple testing, and the selective inference discussed in this paper is only one of them.
We have tested hypotheses separately for controlling the conditional rejection probability of each hypothesis.
Therefore, it would be interesting to consider other types of multiple testing, such as false discovery rate and family-wise error rate, together with our selective inference in a similar manner as \cite{benjamini2014selective}.

We have not discussed power of testing for comparing $p$-values, although the choice of $S$ is mentioned briefly in the last paragraph of Section~\ref{sec:problem-setting}.
Since our $p$-values are asymptotically derived by modifying (\ref{eq:file-drawer-c}) of the file-drawer problem,
their power curves should behave similarly at least locally.
However, $p$-values may differ by comparing higher-order terms of power curves.
Also, there could be possibilities to improve the power by relaxing the approximate unbiasedness.
They are interesting future topics.

\section*{Acknowledgments}

The authors greatly appreciate many comments from our seminar audience at Department of Statistics, Stanford University.
This research was supported in part by JSPS KAKENHI Grant (16K16024 to YT, 16H02789 to HS).

%
%%
%%%------------------------------------------------------------------------------------------------------------------------------
%

%%%------------------------------------------------------------------------------------------------------------------------------
%%
%

\clearpage
\bibliographystyle{imsart-nameyear}
\bibliography{simbp_bib,stat2017}

%%%----------------------------------------------------------------------------------------------

\clearpage
\appendix

\renewcommand{\thetable}{\Alph{section}.\arabic{table}}
\renewcommand{\thefigure}{\Alph{section}.\arabic{figure}}
\renewcommand{\theequation}{\Alph{section}.\arabic{equation}}
\setcounter{table}{0}
\setcounter{figure}{0}
\setcounter{equation}{0}

%%%----------------------------------------------------------------------------------------------

\section{Pvclust details} \label{sec:pvclust-details}

\subsection{Gene sampling} \label{sec:pvclust-sampling}

An example of generative model for gene sampling is specified as follows.
We consider that $x_i$, $i=1,\ldots,n$, are independent observations of a random vector $X$ in $\mathbb{R}^p$.
Assume that there are $K$ gene classes, and $X$ is distributed as a mixture model with probability $\pi_k$, $k=1,\ldots, K$, say, the normal mixture $\sum_{k=1}^K \pi_k N_p(\eta_k, \Sigma_k)$.
For class $k=1,\ldots,K$, 
$E(X)=\eta_k$ represents average gene expressions, and
$V(X)=\Sigma_k$ represents observation noise and gene variation.

Let us examine the ``true'' clusters in this model.
As a very simple setting, we assume $\Sigma_k = \tau_k^2 I_p$ and
Euclidean distance $d_{ij} = \frac{1}{n}\sum_{t=1}^n (x_{ti} - x_{tj})^2$.
Then $E(d_{ij}) = \sum_{k=1}^K \pi_k \{ (\eta_{ki}-\eta_{kj})^2 + 2\tau_k^2 \}$ and $V(d_{ij})=O(n^{-1})$.
By taking the limit $n\to\infty$, $d_\infty$ is given by $E(d_{ij})$ above, which determines the ``true'' dendrogram and ``true'' clusters.
They can be poor representations of reality when all the contribution of $\tau_k^2$ is just observation noise.
In this case, the reality is best represented by $\sum_{k=1}^K \pi_k (\eta_{ki}-\eta_{kj})^2$ by setting $\tau_k^2=0$.
This issue is not considered in our testing procedures.

\subsection{Construction of $f_n$} \label{sec:pvclust-fn}

To find a connection between $\Xc_n$ and $y$, we would like to consider a specific form of transformation $y=f_n(\Xc_n)$ with $m+1 = p(p-1)/2$.
Let us assume the asymptotic normality $\sqrt{n}(d_n - d_\infty) \sim N_{p(p-1)/2}(0,\Sigma(d_\infty))$ for sufficiently large $n$, where $\Sigma(d_\infty)$ expresses the dependency of the covariance matrix on the underlying distribution.
This holds for the Euclidean distance and the correlation, and more generally smooth functions of the first and second sample moments of $x_1,\ldots,x_n$ when the fourth moments of $x_i$ exist so that the central limit theorem applies to the sample moments.
By defining
\[
	y =  f_n(\Xc_n) = \sqrt{n} \, \Sigma(d_\infty)^{-1/2}({\tt dist}(\Xc_n) - d_\infty),
\]
we have the normal model (\ref{eq:model}) approximately holds with $\mu=0$.
For local alternatives $d_\infty' = d_\infty + O(n^{-1/2})$, the model becomes
\begin{equation} \label{eq:model-local-alternatives}
	Y\sim N_{m+1}(\mu, I_{m+1} + \Delta V(d_\infty'))
\end{equation}
with $\mu =  \sqrt{n} \, \Sigma(d_\infty)^{-1/2}(d_\infty' - d_\infty)  = O(1)$ and $\Delta V(d_\infty') = \Sigma(d_\infty)^{-1/2} (
\Sigma(d_\infty') - \Sigma(d_\infty)  )
\Sigma(d_\infty)^{-1/2} = O(n^{-1/2})$.
In this paper, we ignore $\Delta V$ by approximating $\Delta V(d_\infty')=0$ in (\ref{eq:model-local-alternatives}) for developing the theory based on (\ref{eq:model}).
For bootstrap replicates, the asymptotic normality becomes
$\sqrt{n'}(d_{n'}^* - d_n) \mid \Xc_n \sim N_{p(p-1)/2}(0,\Sigma(d_n))$. By approximating $\Delta V=0$ again,
the transformed vector
\[
	y^* = f_n(\Xc_{n'}^*) = \sqrt{n} \, \Sigma(d_\infty)^{-1/2}({\tt dist}(\Xc_{n'}^*) - d_\infty)
\]
follows model (\ref{eq:bpmodel}) with $\sigma^2=n/n'$.

The regions in $\mathbb{R}^{m+1}$ must be considered too. For each cluster $G$,
the event $G \in {\tt hclust}(\Xc_n)$ corresponds to the event $y\in S_n(G)$ by defining
\[
 S_n(G) = \{y \mid G \in {\tt hclust}(n^{-1/2} \Sigma(d_\infty)^{1/2}y + d_\infty)   \}.
 \]
Thus the bootstrap probability of $G$ is $\alpha_{\sigma^2}(S_n(G) | y) = C(G)/B + O_p(B^{-1/2})$.
In this paper, the selective region is $S=S_n(G)$ and the hypothesis region is $H=S_n(G)^c$.
Another interesting choice of selective region would be $S=\bigcap_{G\in {\tt hclust}(\Xc_n)} S_n(G)$ for the dendrogram ${\tt hclust}(\Xc_n)$, but the bootstrap probability of the dendrogram may be too small (could be almost zero) so that our algorithm does not work well.

\subsection{Pvclust analysis of lung dataset} \label{sec:pvclust-experimenet}

The lung data set \citep{garber2001diversity} available in pvclust consists of micro-array expression profiles of $n=916$ genes for $p=73$ lung tissues.
The lung tissues include five normal tissues, one fetal tissue and 67 tumors from patient.
The original data had 918 genes, but two duplications (the last two genes) were removed.
We resample columns of $73 \times 916$ matrix $\Xc_n$ for generating $73 \times n'$ matrix $\Xc_{n'}^*$.
Sample sizes are $n' = $ 8244, 5716, 3963, 2748, 1905, 1321, 916, 635, 440, 305,  211, 146,  101; they are chosen so that $\sigma^2=n/n'$ values are placed evenly in log-scale from $1/9$ to $9$.
The number of bootstrap repetition is $B=10^4$.
Then Algorithm~\ref{alg:simbp} is performed on each cluster. 
The best fitting model from 4 candidates (poly.1, poly.2, poly.3 and sing.3 in Section~\ref{sec:models}) is selected by AIC.
For example, poly.2 is $\varphi(\sigma^2|\beta) = \beta_0 + \beta_1 \sigma^2$,
and poly.3 is $\varphi(\sigma^2|\beta) = \beta_0 + \beta_1 \sigma^2 + \beta_2 (\sigma^2)^2$.
Then $\psi_{\sigma^2}(H|y)$ is extrapolated to $\sigma^2\le 0$ using the selected model,
and $p_{\mathrm{AU},k}$ and $p_{\mathrm{SI},k}$ ($k=3$, $\sigma_0 = \sigma_{-1}=1$), as well as $p_\mathrm{BP}$, are computed by (B) of Step~4.
These $p$-values are denoted as $p_\mathrm{AU}$ and $p_\mathrm{SI}$ by omitting $k$ in Section~\ref{sec:pvclust-example}.
Computation of model fitting and extrapolation is based on the maximum likelihood estimation implemented in the scaleboot package of R.

Model fitting is shown for cluster id =  37, 57, 62, and 67 in Fig.~\ref{fig:pvclust-sbfit}.
Selected model is indicated in each panel. 
For each cluster, observed frequencies are given as follows.
$C_{S_{37}} = $ 10000, 10000,  9997,  9978,  9911,  9704,  9355,  8597,  7443,  6157,  4724,  3583,  2457.
$C_{S_{57}} = $ 9962,  9878,  9657,  9271,  8551,  7773,  6807,  5676, 4622,  3695,  2650 , 1955, 1381.
$C_{S_{62}} = $ 10000, 10000,  9999,  9995,  9963,  9841,  9635,  9181,  8464,  7616,  6742,  5635, 4605.
$C_{S_{67}} = $ 1374, 1095,   871,  674,   553,   471,   338,   280,   223,   136,    89,    71,   29.
$\psi_{\sigma^2}(H|y)=-\psi_{\sigma^2}(S|y)$ is extrapolated by $\varphi_{H,3}$ as follows (id =  37, 57, 62, and 67).
$\psi_{-1}(H|y) = $2.401, 1.583, 2.265, 1.657.
The signed distance $t \approx \psi_{0}(H|y) = $1.934, 1.008, 2.011, $-0.322$.
Therefore, the mean curvature is estimated as $-\hat\gamma \approx \psi_{-1}(H|y) - \psi_{0}(H|y) =$
0.487, 0.575, 0.254, 1.979.

Although $t$ should be positive for the selection event $y\in S$,
$t$ is wrongly estimated as negative for cluster id = 67.
In this case, the algorithm calculates $p_\mathrm{SI}>1$, and we set $p_\mathrm{SI}=1$.
For cluster id = 67, as indicated in the very small $C_{S_{67}}$ values as well as the large $-\hat\gamma$ value, the region $S$ is very small, and both the theories of Sections~\ref{sec:LST} and \ref{sec:NFT} do not work perfectly well.
Nevertheless, the large value of $p_\mathrm{SI}$ safely avoids rejecting the null hypothesis.

\subsection{Pvclust simulation details} \label{sec:pvclust-simulation-details}
\begin{figure}
 \begin{minipage}{0.32\textwidth}
  \centering
   \includegraphics[width=\textwidth]{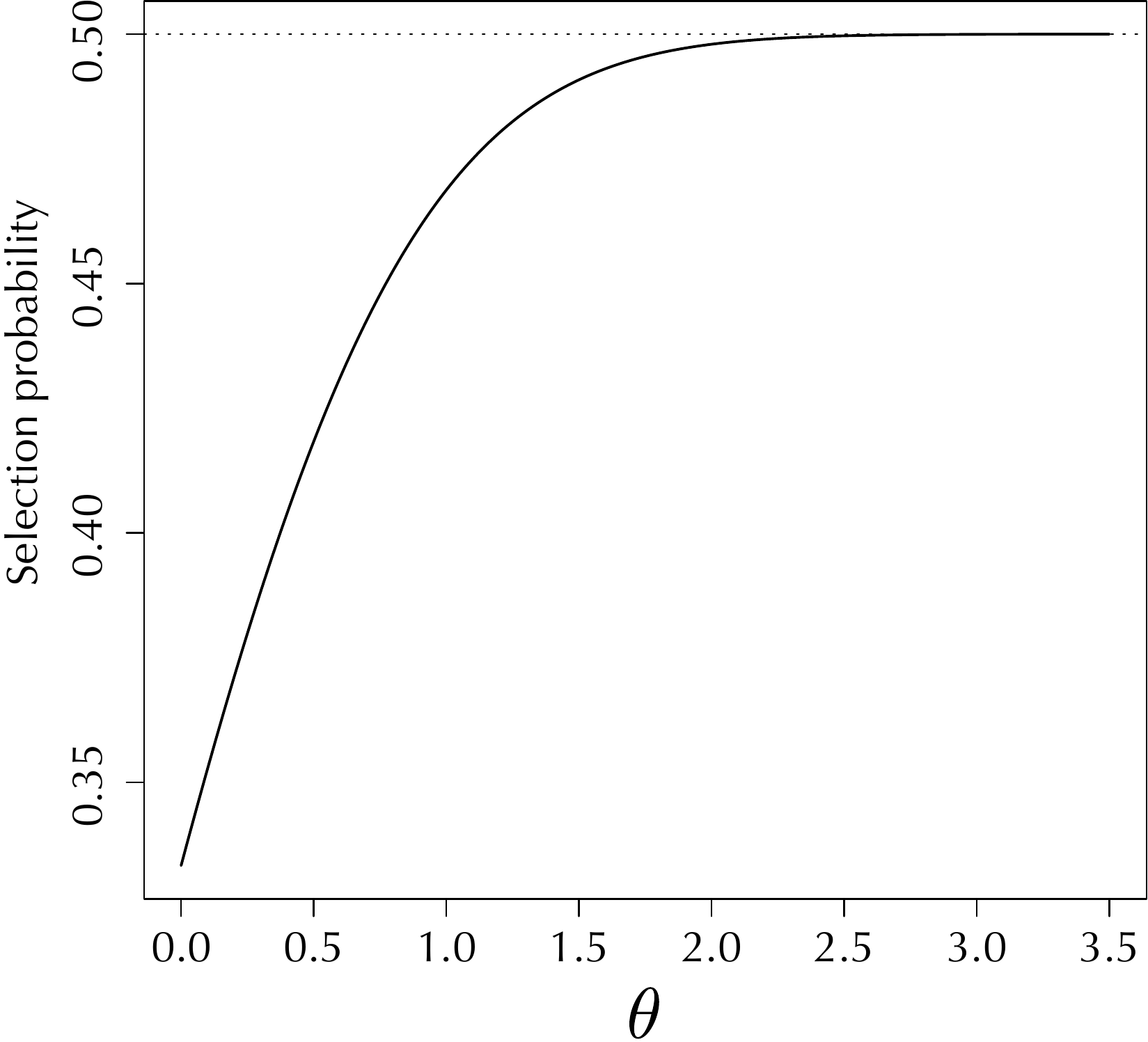}
   (a)
   %\subcaption{}%{Selection probability related with $\theta$}
 \end{minipage}
 \begin{minipage}{0.32\textwidth}
  \centering
   \includegraphics[width=\textwidth]{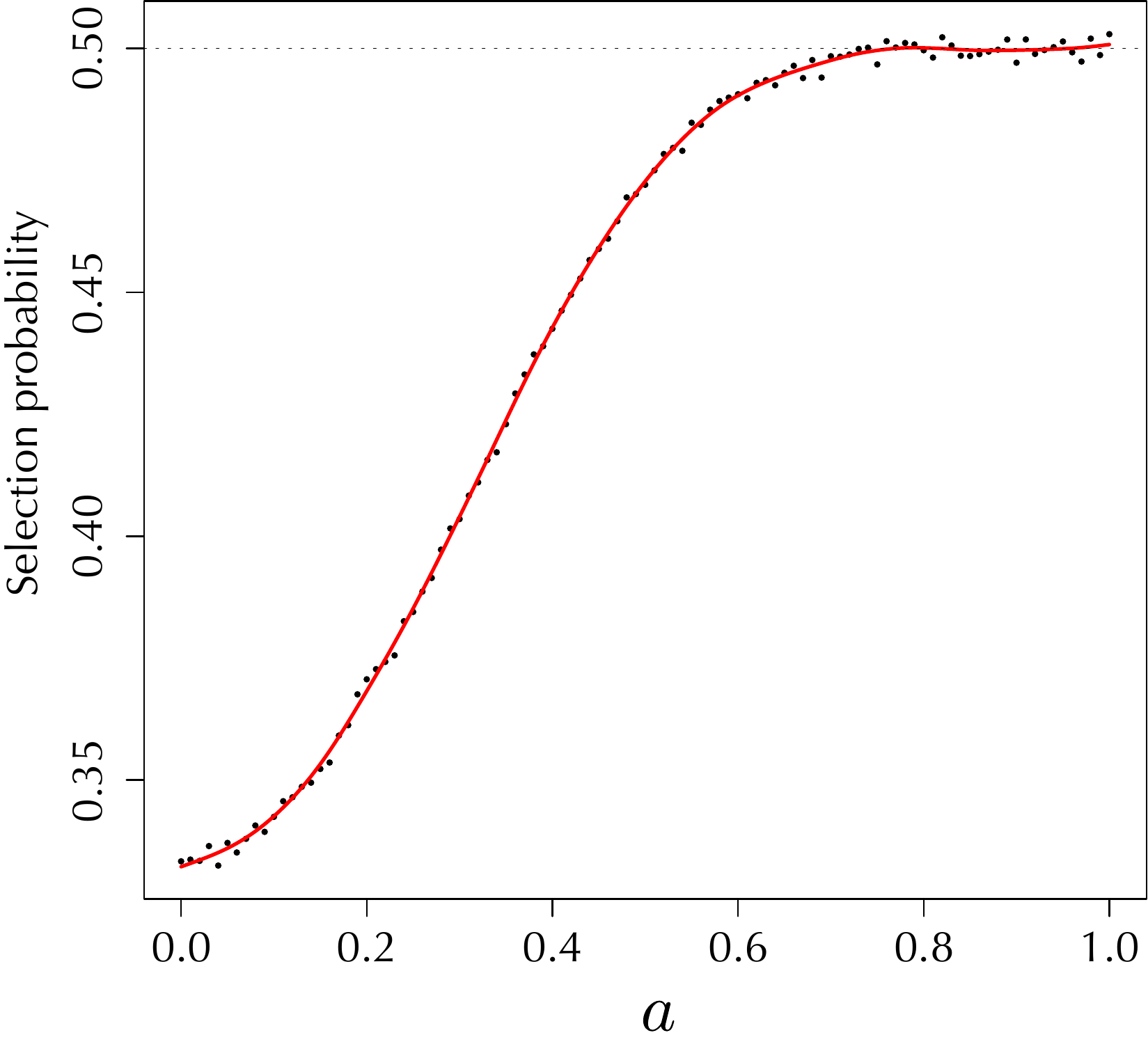}
   (b)
   %\subcaption{}%{Selection probability of Cluster $(1,2)$ related with $a$}
 \end{minipage}
  \begin{minipage}{0.32\textwidth}
  \centering
   \includegraphics[width=\textwidth]{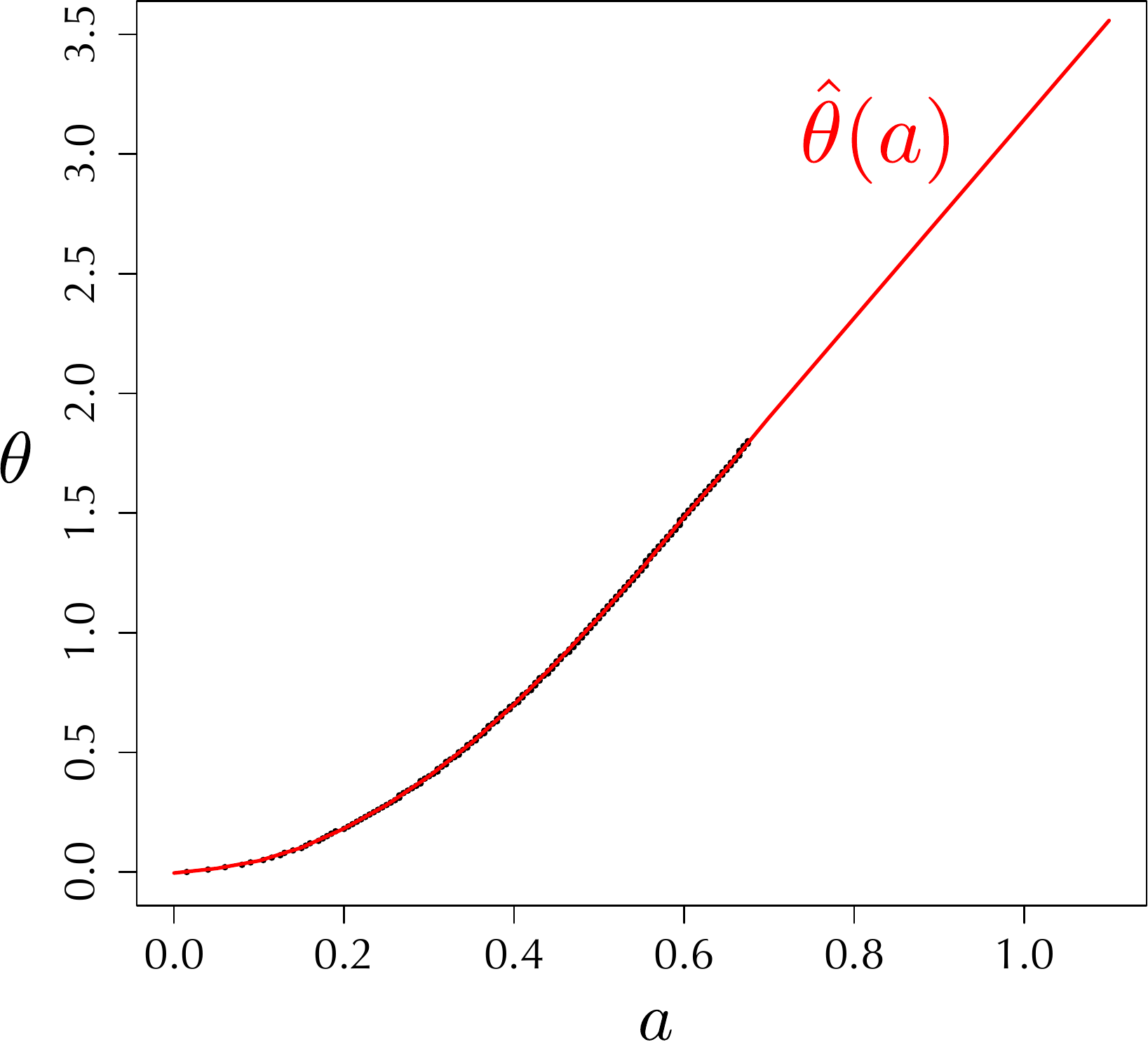}
   (c)
   %\subcaption{}%{Relationship between $\theta$ and $a$ }
    \end{minipage}
   \caption{Relationship between $\theta$ and $a$ through the selection probability.
  (a) Selection probability $P(Y\in S | \mu)$ vs. $\theta$ on the two-dimensional example 
  in which the hypothesis region is concave and  the  boundary surface is nonsmooth.
  (b) Selection probability $P(\{1,2\} | a)$ vs. $a$ on the pvclust simulation with
  its smoothing spline.
 % The red line is a estimated function using the smoothing spline.
  (c) The selection probabilities are matched by $\theta = \hat\theta(a)$.
   }
  \label{fig:sprob-theta-a}
\end{figure}

Here, we describe the details about the simulation of pvclust in Section~\ref{sec:pvclust-simulation}.
As described in Section~\ref{sec:pvclust-simulation}, 
we consider the clustering problem of three tissues.
There are three clusters $\{1,2\},\{1,3\},\{2,3\}$ with the exception of trivial clusters $\{1\},\{2\},\{3\}$ and $\{1,2,3\}$.
Let $d_{ij}=\sum_{s=1}^{1000}(x_{is}-x_{js})^2/1000$.
If $d_{12}<\max{d_{13},d_{23}}$, we observed the cluster $\{1,2\}$.
If $a=0$, the distribution of $(d_{12},d_{13},d_{23})$ is permutation invariant and thus the probability of each cluster is equal to $1/3$.
If $a>0$, $(d_{12},d_{13},d_{23})$ and $(d_{13},d_{12},d_{23})$ follow the same distribution 
and the probability of cluster $\{1,2\}$ is equivalent to one of cluster $\{1,3\}$.
Let $\Pb(\{i,j\} | a)$ be the probability that cluster $\{i,j\}$ occurs.
Since $\Pb(\{2,3\} | a)$ decreases as the value of $a$ increases, 
$\Pb(\{1,2\} | a)$ (or $\Pb(\{1,3\} | a)$) approaches $1/2$ as $a$ becomes large.
In pvclust, we test the null hypothesis that the cluster $\{i,j\}$ is not true only when the cluster $\{i,j\}$ is observed.
More formally, with the notation of \ref{sec:pvclust-fn}, 
the null hypothesis $\mu \in H=S_n(\{i,j\})^c$ is tested only when $y\in S=S_n(\{i,j\})$.
Hence, this pvclust example may correspond to the non-smooth and concave case of two dimensional examples 
in Section~\ref{sec:convex-concave-simulation}.
Specifically, 
$S_n(\{1,2\})^c$ (or $S_n(\{1,3\})^c$) is corresponding to the region $H=\{(u,v)\mid v\le |u|/3\}$ in the two dimensional example, 
and $S_n(\{1,2\})$ (or $S_n(\{1,3\})$) can be interpreted as the selective set $S=H^c$ in the two dimensional example.
In the two dimensional example, 
we chose $\theta=0.0,0.5,\dots,3.5$ and, for each parameter $\mu=(\theta,-h(\theta))\in \partial H$,
we computed the selection probability $\Pb(Y\in S \mid \mu)$ and the selective rejection probabilities with several $p$-values as shown in Table~\ref{table:concave}.
The subfigure (a) in Fig.~\ref{fig:sim_pvclust} is a visualization of Table~\ref{table:concave} 
and shows how the selective rejection probability of each method varies with $\theta$.
The value of $\theta$ represents the distance from the vertex, and $a$ in this simulation of pvclust is related to $\theta$.
The purpose of this simulation is to ensure that, even in more practical setting, 
the proposed method can reduce bias efficiently with distance from the vertex.
Note that the situation in this simulation (or in the two-dimensional example) approaches the flat case, 
in which the boundary surface $\partial H$ is flat, as $a$ (or $\theta$) becomes large.
To make it easier to compare with the result of two-dimensional example, 
first we need to find the values of $a$ which correspond to the values of $\theta$ in the two-dimensional example.
In this simulation, we focus on selective inferences for the null hypotheses that cluster $\{1,2\}$ (or $\{1,3\}$) is not true.
Here, for each $\theta$, 
we find the value of $a$ such that the selection probability, that is, $\Pb(\{1,2\} | a)$ (or $\Pb(\{1,3\} | a)$), 
is equivalent to $\Pb(Y\in S \mid \mu)$ in the two-dimensional example.
The subfigure (a) of Fig.~\ref{fig:sprob-theta-a} shows the relationship between $\theta$ and $\Pb(Y\in S \mid \mu)$.
Note that all probabilities in Table~\ref{table:concave} are computed accurately by numerical integration.
In this simulation of pvclust, it is difficult to use numerical integration and thus we employed the Monte-Carlo simulation.
For each value of $a=0.00,0.01,\dots,1.00$, we generate $10^5$ datasets, 
and the probability $\Pb(\{1,2\} | a)$ is estimated by the ratio of the number of times that cluster $\{1,2\}$ occurs to $10^5$, say $\hat{p}(a)$.
In the subfigure (b) of Fig.~\ref{fig:sprob-theta-a}, the black dots indicate pairs $(a,\hat{p}(a))$, 
and the red line is the estimated functional relationship between $a$ and the corresponding probability $\Pb(\{1,2\} | a)$ by the smoothing spline.
Based on the estimated relationship, in the sense of the selection probability, 
the values of $a$ corresponding to $\theta=0.0,0.5,\dots,3.5$ are given by $a=0.018, 0.337, 0.484, 0.604, 0.717, 0.822, 0.927, 1.031$.
In Section~\ref{sec:pvclust-simulation}, for each of these values, we compute the selective rejection probabilities for BP, AU, 2BP, 2AU, and SI.
The subfigure (c) of Fig.~\ref{fig:sprob-theta-a} shows the relationship between $a$ and $\theta$.

\section{Simulation details} \label{sec:convex-concave-simulation-details}

%convex
\begin{table}
\caption{Convex hypothesis regions : The results are in the same settings as in Table~\ref{table:concave}. }
\label{table:convex}
\scalebox{0.92}{
\begin{tabular}{lrrrrrrrr|r}
\hline
Smooth            & $\theta=0.0$  & $0.5$ & $1.0$ & $1.5$ & $2.0$ & $2.5$ & $3.0$ & $3.5$ & Bias\\
\hline
BP          		& 26.12		& 25.73		& 24.74 		& 23.57 		& 22.51		& 21.70		& 21.14 		& 20.76	& 13.27\\
AU ($k = 3$)     & 18.42  		& 18.44    		& 18.55    		& 18.81  		& 19.15   		& 19.46		& 19.69   		& 19.83	& 9.03\\
\hdashline
2BP          		& 13.82 		& 13.56		& 12.91 		& 12.17 		& 11.52 		& 11.03 		& 10.68 		& 10.46 	& 2.01\\
2AU ($k = 2$)     & 9.61    		& 9.55    		&   9.46    		& 9.42    		& 9.46    		& 9.55 		& 9.67    		&   9.76    & 0.46\\
2AU ($k = 3$)     &   9.22  		& 9.23    		&   9.28    		& 9.40    		& 9.57    		& 9.73 		& 9.85    		& $\bm{9.91}$ & 0.48\\
SDBP 		& 10.51		& 10.40		& $\bm{10.15}$	& $\bm{9.93}$	& 9.80		& 9.77		& 9.80		& 9.84 	& 0.23\\
SI ($k = 2$)      & $\bm{10.30}$ & $\bm{10.21}$	& $\bm{10.03}$	& $\bm{9.88}$	& $\bm{9.81}$	& $\bm{9.81}$	& $\bm{9.85}$ &   9.89    & $\bm{0.16}$\\
SI ($k = 3$)      & $\bm{9.99}$ 	& $\bm{9.95}$ 	& 9.89	 	& 9.86		& $\bm{9.89}$	& $\bm{9.94}$	& $\bm{9.98}$ &$\bm{10.00}$ & $\bm{0.07}$\\
\hdashline
$P(Y\in S\mid\mu)$    & 55.46 &   55.00   & 53.91 & 52.72 & 51.76 & 51.11 & 50.71 & 50.47 & -  \\
\hline\hline
Nonsmooth     & $\theta=0.0$  & $0.5$ & $1.0$ & $1.5$ & $2.0$ & $2.5$ & $3.0$ & $3.5$ & Bias\\
\hline
BP          		& 35.02		& 28.61		& 24.48		& 22.09		& 20.86		& 20.31		& 20.01 		& 20.03		& 13.29\\
AU ($k = 3$)     & 19.32  		& 16.49    		& 17.09    		& 18.66  		& 19.80  		& 20.26		& 20.29   		& 20.18		& 8.69\\
\hdashline
2BP          		& 20.09 		&  15.29 		&  12.57 		&  11.15 		&   10.46 		& 10.17 		& $\bm{10.05}$	& $\bm{10.01}$ & 2.08\\
2AU ($k = 2$)   &   11.48 		&   8.89    		&  8.39   		& 8.73    		&    9.22   		& 9.60    		& 9.82    		&  9.94   		& 0.80\\
2AU ($k = 3$)   & $\bm{9.91}$ 	&   8.11		&  8.46    		& $\bm{9.32}$ 	& $\bm{9.92}$ 	& $\bm{10.15}$ & 10.16 		&   10.10 		& 0.69\\
SDBP 		& 12.13		& 9.30		& 8.57		& 8.78		& 9.13		& 9.49		& 9.74		& 9.88		& 0.81\\
SI ($k = 2$)      &   12.76		 & $\bm{9.81}$ &$\bm{9.00}$  & 9.10    		& 9.42    		&   9.69    		& 9.86    		& $\bm{9.95}$	& $\bm{0.62}$\\
SI ($k = 3$)      & $\bm{11.34}$  & $\bm{9.05}$ & $\bm{8.95}$ & $\bm{9.48}$ 	& $\bm{9.91}$ 	&$\bm{10.09}$ & $\bm{10.10}$ &   10.06   	& $\bm{0.43}$\\
\hdashline
$P(Y\in S\mid\mu)$    & 66.67 & 58.17 & 53.13 & 50.92 & 50.20 & 50.03 & 50.00 & 50.00 & -\\
\hline
\end{tabular}
}
\end{table}

In this section, we describe the results of the convex case in Section~\ref{sec:convex-concave-simulation}.
First, we consider the following convex hypothesis regions $H$:
\ba
h(u)= \sqrt{a+u^2/3}\;\;(a=0,1),\quad H=\{(u,v)\mid v \le -h(u)\}.
\ea
Table~\ref{table:convex} is the results of the convex case, which is in parallel with Table~\ref{table:concave}.
From this results, the bias reduces effectively by using our method with distance from the vertex in the convex case.
Fig.~\ref{fig:sim_convex} shows the contour lines of $p$-values at level $\alpha=0.1$．
As in Fig.~\ref{fig:sim_concave}, as $\theta$ becomes large, the lines of 2BP, 2AU, and SI agree with each other．
From this, we confirm that the twice of non-selective $p$-values induce selective inference when $\partial H$ is flat.
In fact, $P(Y\in S|\mu)$ in Tables~\ref{table:convex} show that the selection probabilities are nearly $1/2$ at large $\theta$ values.
The selection probability, increasingly, approaches $2/3$ as $\theta$ goes to zero in the nonsmooth convex case.
The curve of $(3/2)p_{\mathrm{AU},3}=\alpha$ is very close to the curve of $p_{\mathrm{SI},3}=\alpha$ near the vertex.
Thus, also in the nonsmooth convex case, we can see that our method adjusts automatically the selection probability.
In addition, Fig.~\ref{fig:sim_convex_sdbp} shows that SI ($k=2$) and SDBP have similar rejection boundaries in both smooth and nonsmooth cases.
Actually, we can see that SI ($k=2$) and SDBP provide similar selective rejection probabilities in Table~\ref{table:convex}

\begin{figure}
 \begin{minipage}{0.49\textwidth}
  \begin{center}
   \includegraphics[width=0.9\textwidth]{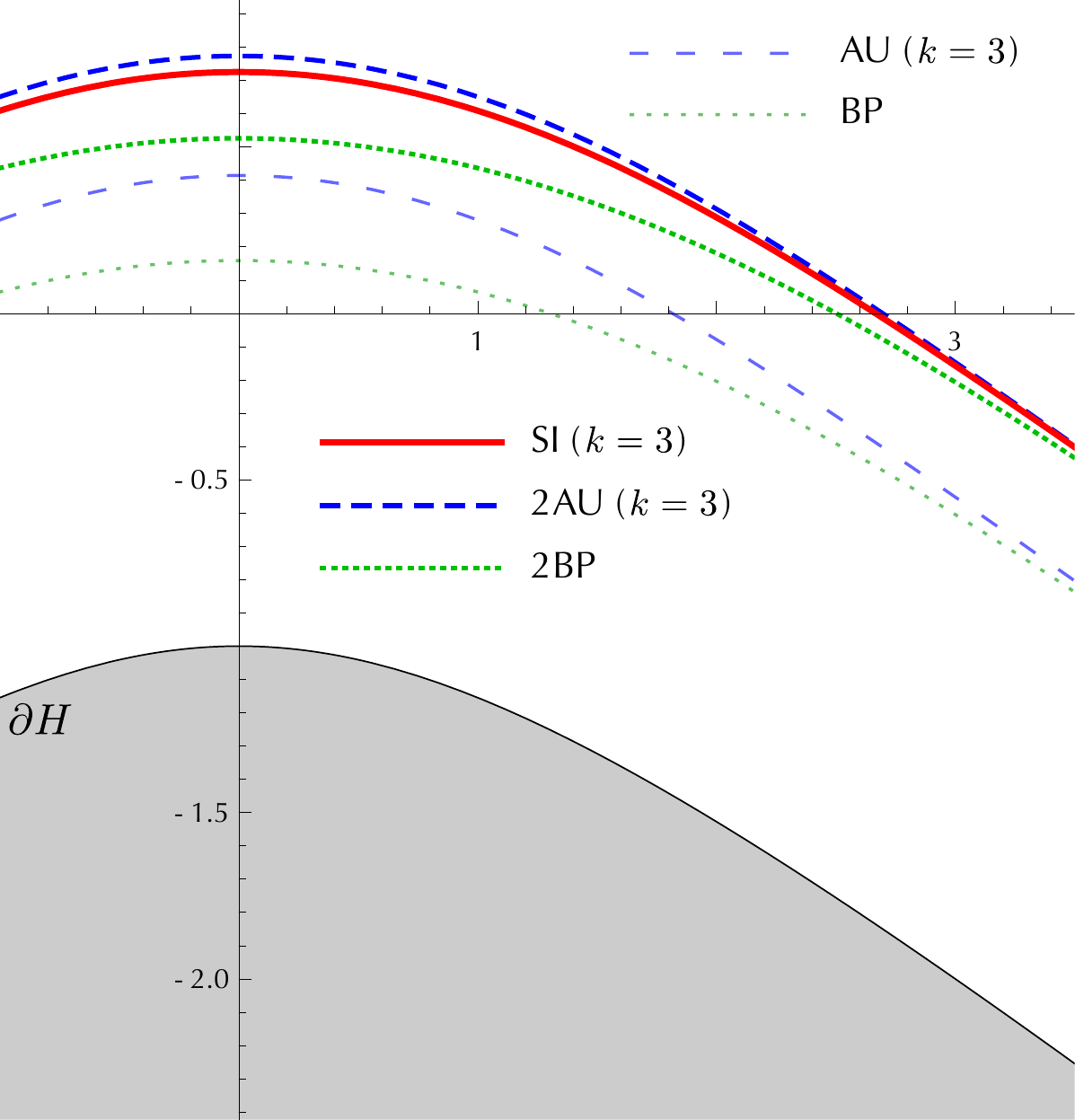}
   \hspace{1.6cm} (a) Smooth case : $a=1$
 \end{center}
 \end{minipage}
 \begin{minipage}{0.49\textwidth}
  \begin{center}
   \includegraphics[width=0.9\textwidth]{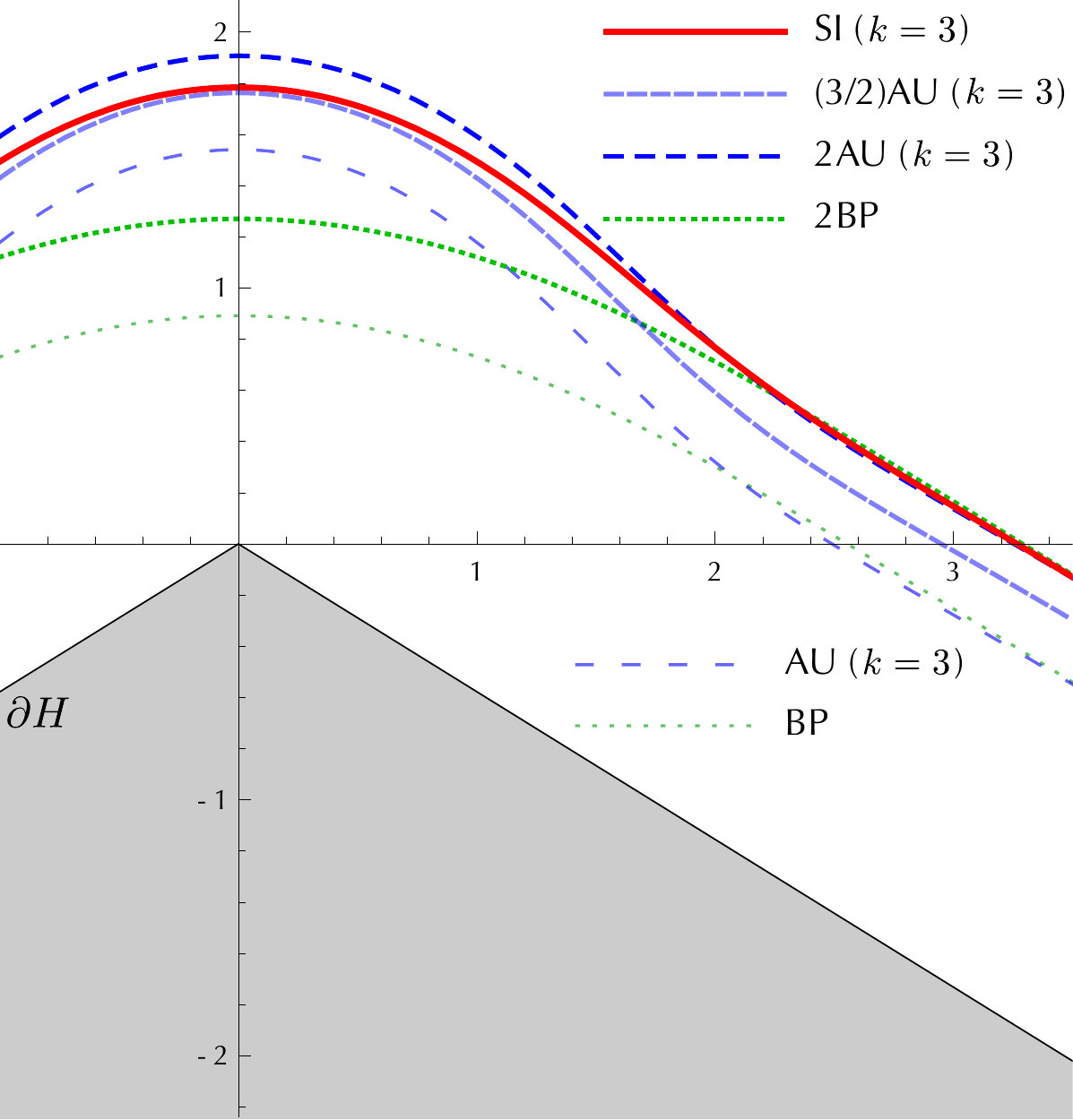}
   \hspace{1.6cm} (b) Nonsmooth case : $a=0$
 \end{center}
 \end{minipage}
   \caption{Convex hypothesis regions : Contour lines of $p$-values with $\alpha=0.1$.
   SI ($k=3$) : $p_{\mathrm{SI},3}(y)=\alpha$ (a solid line).
   (3/2)AU ($k=3$) : $3p_{\mathrm{BP},2}(y)/2=\alpha$  (a densely dashed line).
   2AU ($k=3$) : $2p_{\mathrm{AU},3}(y)=\alpha$ (a dashed line).
   2BP : $p_\mathrm{BP}=\alpha$ (a dotted line).
   AU ($k=3$) : $p_{\mathrm{AU},3}(y)=\alpha$ (a loosely dashed line).
   BP ($k=3$) : $p_\mathrm{BP}=\alpha$ (a loosely dotted line).}
  \label{fig:sim_convex}
\end{figure}

\begin{figure}
 \begin{minipage}{0.49\textwidth}
  \begin{center}
   \includegraphics[width=0.9\textwidth]{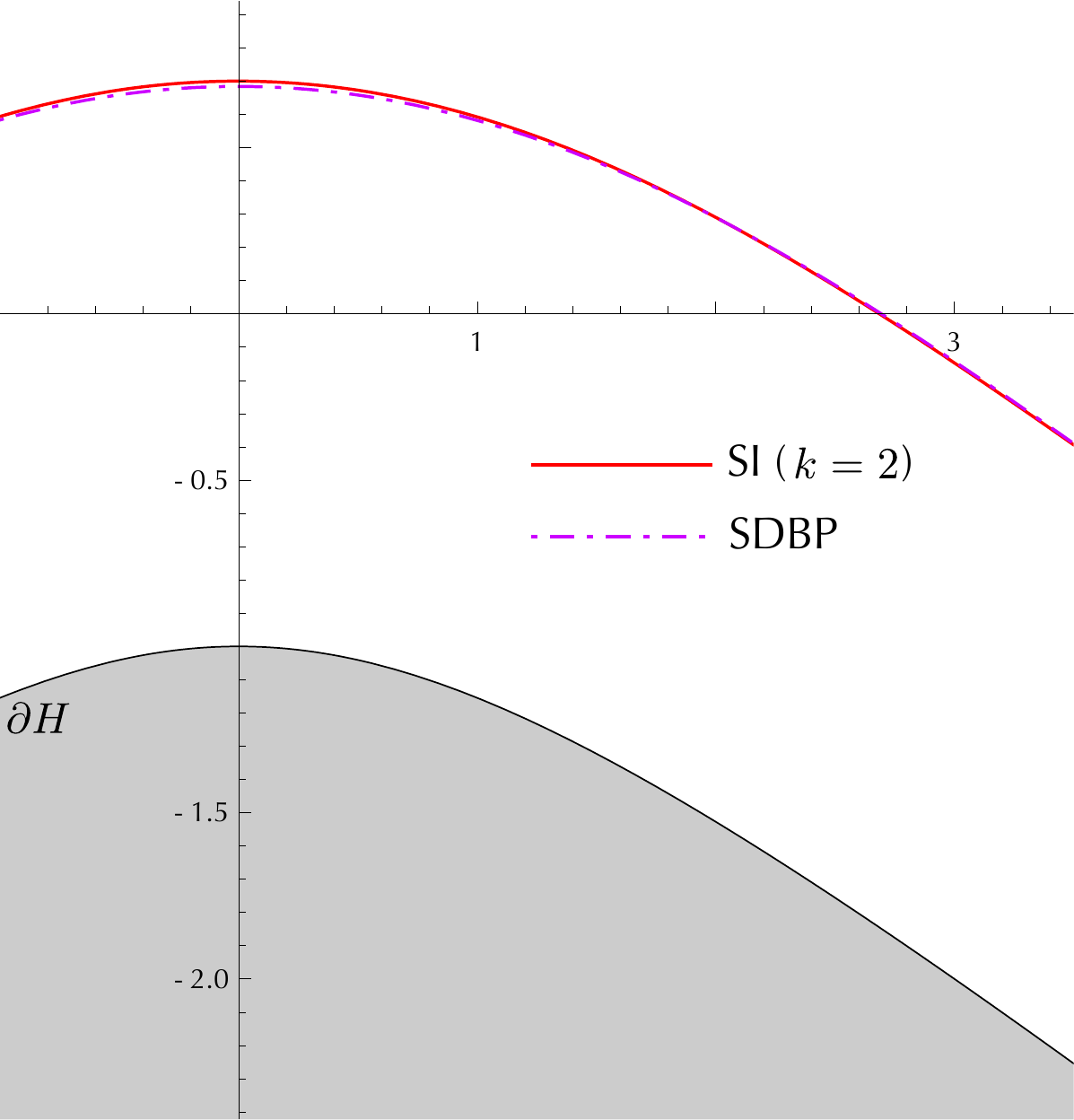}
   \hspace{1.6cm} (a) Smooth case : $a=1$
 \end{center}
 \end{minipage}
 \begin{minipage}{0.49\textwidth}
  \begin{center}
   \includegraphics[width=0.9\textwidth]{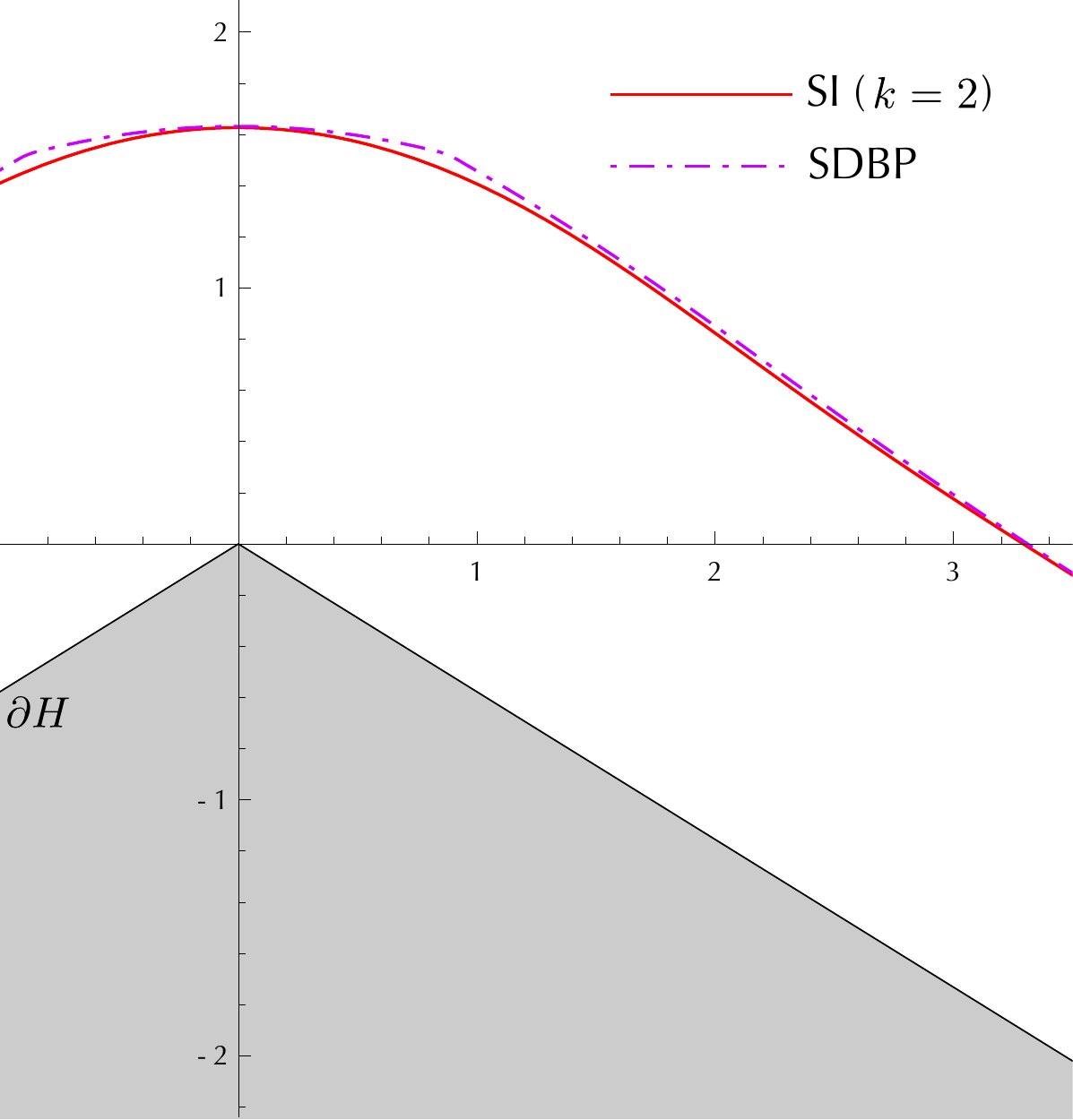}
   \hspace{1.6cm} (b) Nonsmooth case : $a=0$
 \end{center}
 \end{minipage}
   \caption{Relationship between SI ($k=2$) and SDBP : Contour lines of $p$-values with $\alpha=0.1$.
    SI ($k=2$) : $p_{\mathrm{SI},2}(y)=\alpha$ (a solid line).
    SDBP : $p_{\mathrm{BP},2}(y)=\alpha$  (a dashed-dotted line).}
  \label{fig:sim_convex_sdbp}
\end{figure}

\begin{figure}
 \begin{minipage}{0.32\textwidth}
  \centering
   \includegraphics[width=\textwidth]{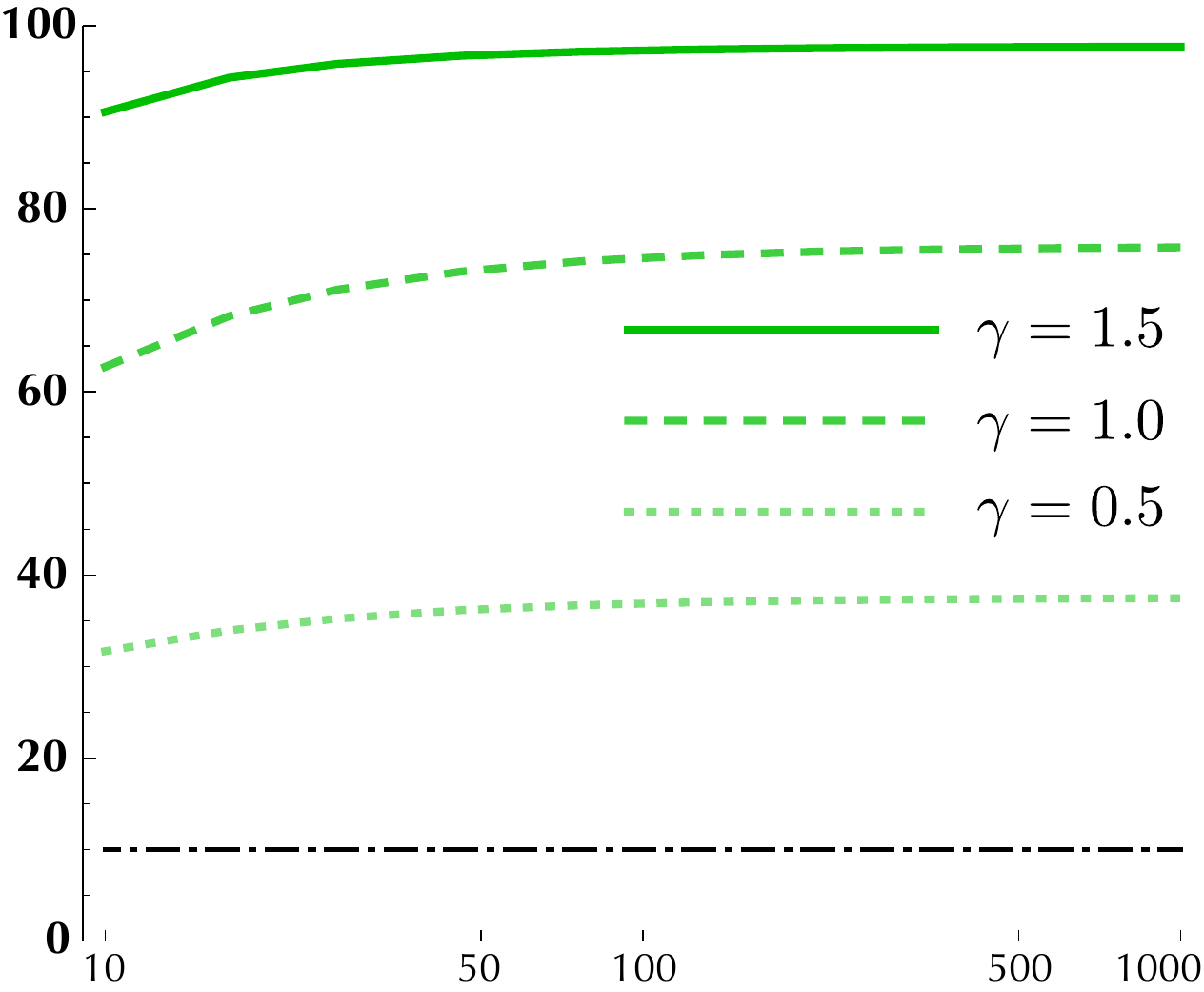}
   \subcaption{2BP}
 \end{minipage}
 \begin{minipage}{0.32\textwidth}
  \centering
   \includegraphics[width=\textwidth]{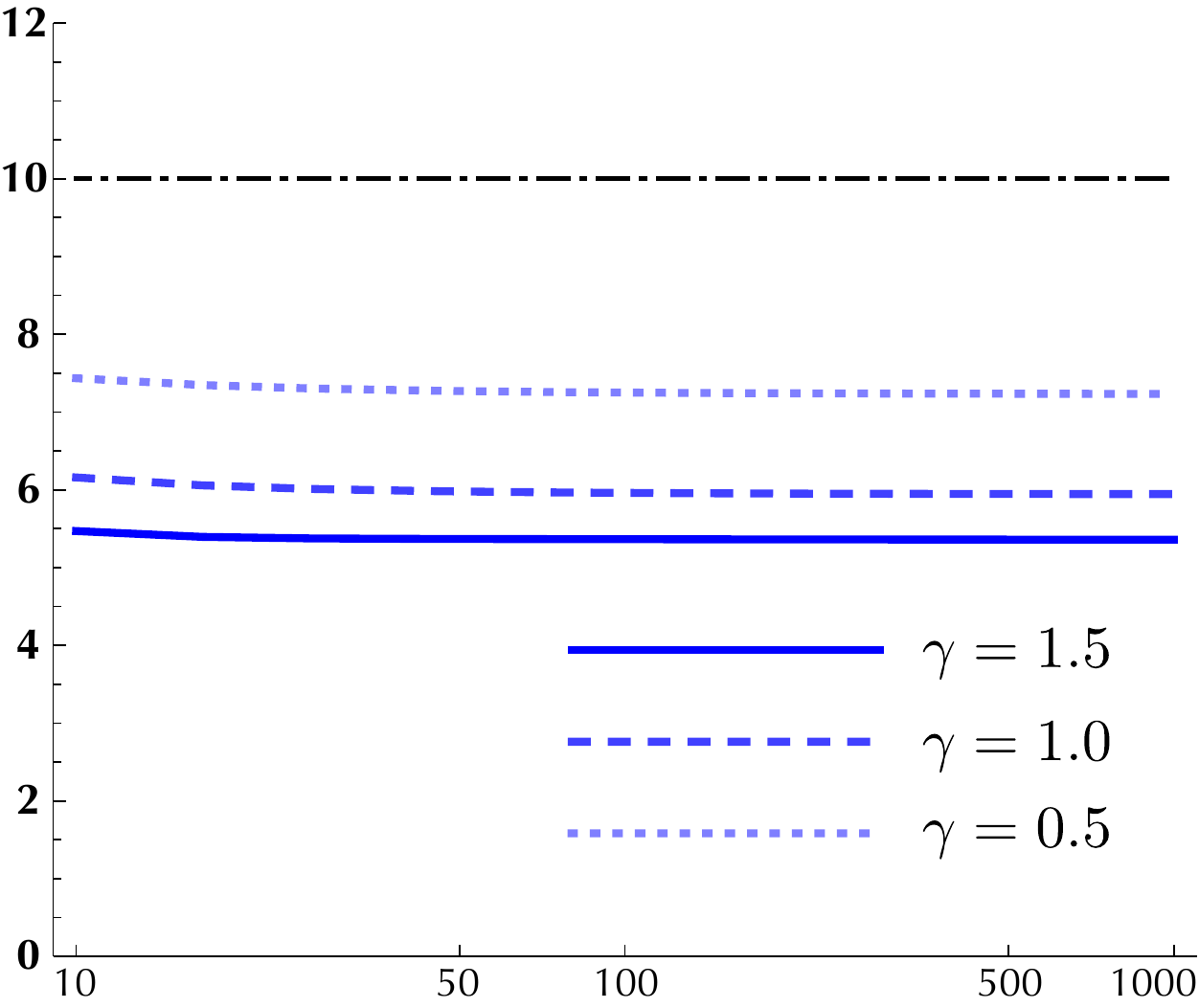}
   \subcaption{2AU ($k=3$)}
 \end{minipage}
  \begin{minipage}{0.32\textwidth}
  \centering
   \includegraphics[width=\textwidth]{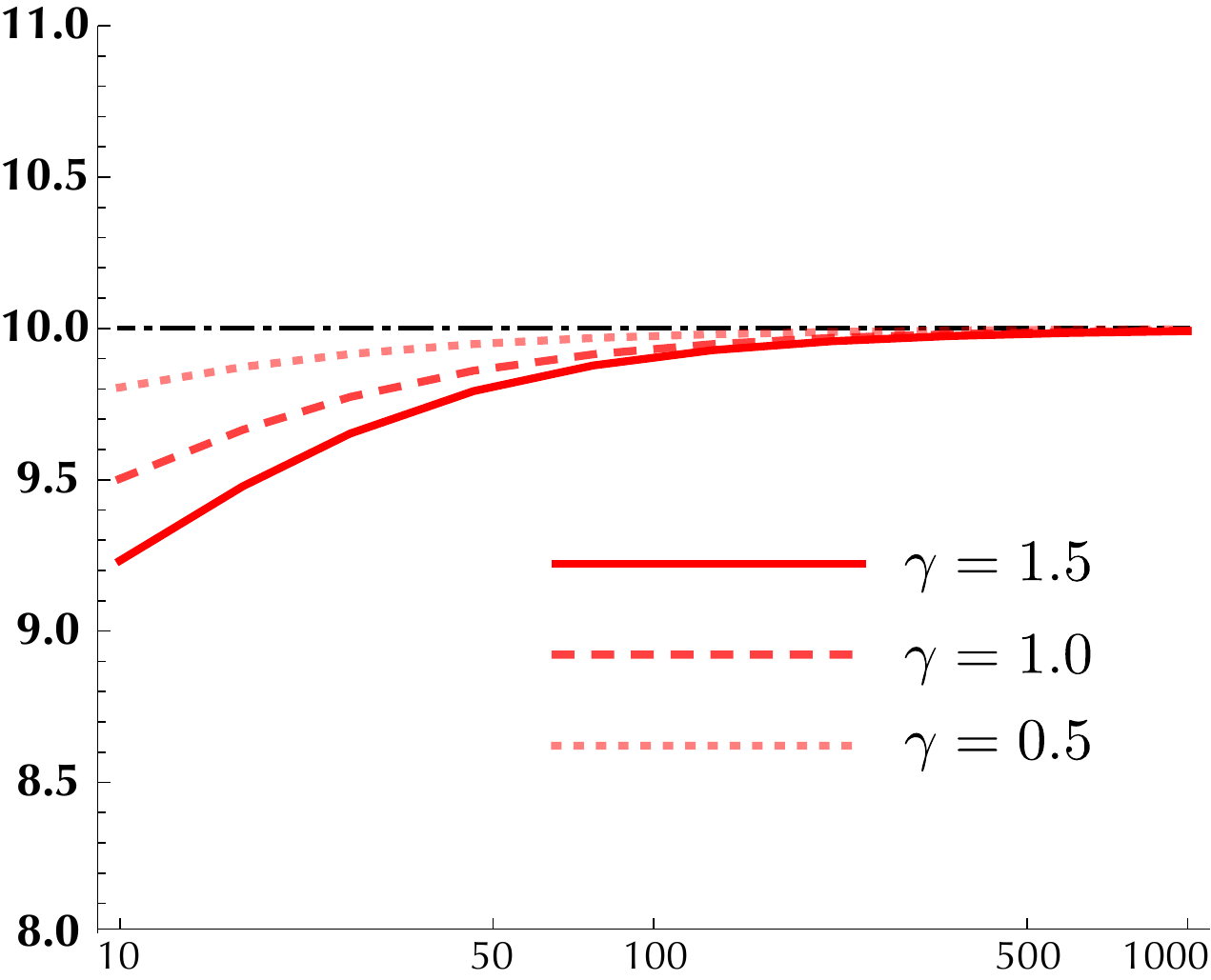}
   \subcaption{SI ($k=3$)}
    \end{minipage}
   \caption{Convex hypothesis regions : selective rejection probabilities as a function of the number of dimensions $m+1$. 
	The axes have the same meaning as in Fig.~\ref{fig:sim_sphere_concave}
   ($\gamma=0.5$ : a dotted line, $\gamma=1.0$ : a broken line, $\gamma=1.5$ : a solid line).
   The dashed-dotted lines is the line of the ideal unbiased selective test as with Fig.~\ref{fig:sim_sphere_concave}.
   }
  \label{fig:sim_sphere_convex}
\end{figure}

Next, we show the results for the convex case of the spherical example which is originally considered in Example~1 of \cite{Efron:Tibshirani:1998:PR}.
Suppose that $H=\{\mu \mid \|\mu\|\le \theta\}$ and $S=H^c$ as a convex hypothesis region.
That is, we consider the case that the hypothesis region is a sphere of radius $\theta$ in $\Rb^{m+1}$.
All the other setting is same as in the spherical example of Section~\ref{sec:convex-concave-simulation}.

Fig.~\ref{fig:sim_sphere_convex} shows
the change of the selective rejection probability as the number of dimensions increases.
Whereas 2BP and 2AU ($k$) have serious bias related to the magnitude of mean curvature,
the selective rejection probabilities of SI ($k$) approach $\alpha=10\%$ as the number of dimensions increases.

%
%%
%%%=================================================
\section{Proofs for the large sample theory} \label{sec:appendix-LST}

We give details of the large sample theory in this section.

\subsection{Preliminary} \label{sec:lemma-LST}

First we give Lemma~\ref{lemma:change-of-coordinates} and its proof, which provides the formula of change of coordinates for projections.
This result is shown in Lemma~3 of \cite{Shimodaira2014higherorder} with fourth order accuracy for class $\Sc$, but
the result is very much simplified here with the second order accuracy for class $\Tc$.
We consider the basis of local coordinates at $(u,-h(u))$: $f(h,u)$, $b_1(h,u)$, $\ldots$, $b_m(h,u) \in \mathbb{R}^{m+1}$.
$f(h, u)$ is the normal vector to $\Bc(h)$ with $i$-th element $\partial h/\partial u_i$, $i=1,\ldots,m$ and
$(m+1)$-th element 1.
$b_i(h,u)$ is the tangent vector to  $\Bc(h)$ with $i$-th element 1, $(m+1)$-th element $-\partial h/\partial u_i$, and all other elements zero.
$f(h,u)$ and $b_i(h,u)$ are orthogonal to each other, and the inner product is $f(h,u) \cdot b_i(h,u) = 0$.

\begin{lemma} \label{lemma:change-of-coordinates}
For any $h,\eta \in \Tc$, we consider a shift of a point $(u, -h(u)) \in \Bc(h)$ to the normal direction $f(h,u)$ with signed distance $\eta(u)$. 
The new point is $(\theta, -s(\theta))$. The function $s(\theta)$, $\theta\in\mathbb{R}^m$ is defined by
\begin{equation} \label{eq:htos}
 (\theta, -s(\theta)) = (u, -h(u)) + f(h,u) \| f(h,u) \|^{-1} \eta(u).
\end{equation}
The change of coordinates $u\leftrightarrow \theta$ is given by
$\theta_i(u) \doteq u_i + \eta_0 h_i + 2 \eta_0 h_{ij} u_j = u_i + O(n^{-1/2})$ and
$u_i(\theta) \doteq \theta_i - \eta_0 h_i - 2 \eta_0 h_{ij} \theta_j = \theta_i + O(n^{-1/2})$.
$s$ is given by
$s(\theta) \doteq h_0 - \eta_0 +(h_i - \eta_i) \theta_i + (h_{ij} - \eta_{ij}) \theta_i \theta_j$
with coefficients
\begin{equation} \label{eq:hlambdatos}
	s_0 = h_0 - \eta_0,\quad s_i = h_i - \eta_i, \quad s_{ij} = h_{ij} - \eta_{ij},
\end{equation}
so $s\in \Tc$. Conversely, for any $h, s\in\Tc$, $\eta(u)$ with coefficients
\begin{equation} \label{eq:hstolambda}
	\eta_0 = h_0 - s_0,\quad \eta_i = h_i - s_i, \quad  \eta_{ij} = h_{ij} - s_{ij}
\end{equation}
satisfies (\ref{eq:htos}), and $\eta\in \Tc$.
Therefore, shift of surfaces in $\Tc$ can formally be treated as simple differences
$s(\theta) \doteq h(\theta) -\eta(\theta)$, $\eta(u) \doteq h(u)-s(u)$
by ignoring $O(n^{-1})$ terms of tilting of the normal vector.

\end{lemma}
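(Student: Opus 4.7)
The plan is to unpack the defining equation (\ref{eq:htos}) in coordinates and expand everything to second-order accuracy, discarding $O(n^{-1})$ terms throughout. Nothing deep is required; the whole statement is a bookkeeping exercise about which products of small quantities are safe to drop under $\Tc$, together with the observation that the normal vector has unit length to the accuracy we care about.

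First I would compute the normal $f(h,u)$ and its norm. Because $h\in\Tc$, differentiating $h(u)\doteq h_0 + h_i u_i + h_{ij}u_iu_j$ gives $\partial h/\partial u_i \doteq h_i + 2h_{ij}u_j$, and all higher order contributions are $O(n^{-1})$. Squaring and summing, $\|f(h,u)\|^2 = 1 + \sum_i(\partial h/\partial u_i)^2 = 1 + O(n^{-1})$, so $\|f(h,u)\|^{-1}\doteq 1$ and the normalization factor in (\ref{eq:htos}) may be dropped. This reduces the vector equation to the component-wise statements $\theta_i \doteq u_i + \eta(u)(h_i + 2h_{ij}u_j)$ and $s(\theta)\doteq h(u) - \eta(u)$.

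Next I would solve the first relation for $\theta(u)$ and invert it. Expanding $\eta(u) = \eta_0 + \eta_i u_i + \eta_{ij}u_iu_j$ and multiplying by $(h_i + 2h_{ij}u_j)$, every term involving $\eta_i$ or $\eta_{ij}$ contributes $O(n^{-1})$ (a product of two coefficients of order $O(n^{-1/2})$), so only $\eta_0$ survives:
\begin{equation*}
\theta_i \doteq u_i + \eta_0 h_i + 2\eta_0 h_{ij}u_j .
\end{equation*}
The correction $\theta_i - u_i$ is $O(n^{-1/2})$, so substituting $u_j\mapsto\theta_j$ on the right side introduces only an $O(n^{-1})$ error, yielding $u_i(\theta) \doteq \theta_i - \eta_0 h_i - 2\eta_0 h_{ij}\theta_j$.

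Then I would compute $s(\theta)$ by evaluating $h(u)-\eta(u)$ and re-expressing in $\theta$. Writing $h(u)-\eta(u) = (h_0-\eta_0) + (h_i-\eta_i)u_i + (h_{ij}-\eta_{ij})u_iu_j$ and substituting the formula for $u_i(\theta)$, the corrections $-\eta_0 h_i - 2\eta_0 h_{ij}\theta_j$ multiply the $O(n^{-1/2})$ coefficients $h_i-\eta_i$ and $h_{ij}-\eta_{ij}$, producing only $O(n^{-1})$ terms. What remains is exactly $s(\theta)\doteq h_0 - \eta_0 + (h_i-\eta_i)\theta_i + (h_{ij}-\eta_{ij})\theta_i\theta_j$, which gives (\ref{eq:hlambdatos}) by matching coefficients, and $s\in\Tc$ since each new coefficient inherits the size required by $\Tc$ from those of $h$ and $\eta$. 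For the converse, given $h,s\in\Tc$, the coefficients in (\ref{eq:hstolambda}) define an $\eta\in\Tc$ (again by term-wise size), and feeding this $\eta$ into the forward map reproduces $s$. The only mildly subtle point — and the closest thing to an obstacle — is keeping track of which cross-terms are $O(n^{-1})$ and thus discardable, which is what makes the normalization of $f(h,u)$ and the quadratic corrections from $\eta_i, \eta_{ij}$ disappear cleanly at second order.
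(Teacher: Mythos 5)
Your proposal is correct and follows essentially the same route as the paper's proof: reduce $\|f(h,u)\|^{-1}\doteq 1$ using $h\in\Tc$, expand the vector equation componentwise, keep only terms through $O(n^{-1/2})$ (noting that products of $O(n^{-1/2})$ coefficients vanish), invert $\theta(u)$ by substituting $u_i = \theta_i + O(n^{-1/2})$, and verify the converse by plugging back in. No substantive differences.
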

\begin{proof}
Since $\partial h/\partial u_i = h_i + 2h_{ij} u_j = O(n^{-1/2})$, and $(\partial h/\partial u_i)^2 \doteq 0$,
we have 
$\| f \|^2 = 1 + \sum_{i=1}^m (\partial h/\partial u_i)^2 \doteq 1 $ and $\|f\|^{-1} \doteq 1$.
Then $\theta_i(u)$ is obtained from the $\theta_i$ element of (\ref{eq:htos}) as
$\theta_i = u_i + \eta(u) \|f\|^{-1} \partial h/\partial u_i \doteq u_i + (\eta_0 + O(n^{-1/2})) (h_i + 2h_{ij} u_j)$.
Conversely, substituting $u_i(\theta)$ into it, we verify that $u_i(\theta)$ is correct.
The $v$ element of (\ref{eq:htos}) gives
$s(\theta) = h(u) - \eta(u) \|f\|^{-1} \doteq h(u) - (\eta_0 + \eta_i u_i + \eta_{ij} u_i u_j) (1+O(n^{-1})) 
\doteq (h_0 - \eta_0) + (h_i - \eta_i) u_i + (h_{ij} -\eta_{ij}) u_i u_j
$. Substituting $u_i = \theta_i + O(n^{-1/2})$ into it, we get (\ref{eq:hlambdatos}).
Conversely, given $h, s\in \Tc$, we substitute $\eta(u)$ of (\ref{eq:hstolambda}) into (\ref{eq:htos}) and follow the calculation so far, we verify that $\eta(u)$ is the solution.
\end{proof}

The following trivial lemma will be repeatedly used.
\begin{lemma}\label{lemma:phi-phi-taylor}
$x_1,x_2\in\mathbb{R}$, and for sufficiently small $\epsilon_1, \epsilon_2\in\mathbb{R}$, we have
\begin{equation}
  \frac{\bar\Phi(x_1 + \epsilon_1)} {\bar\Phi(x_2 + \epsilon_2)} = \frac{\bar\Phi(x_1 + \epsilon_3)}{\bar\Phi(x_2)} + O(\epsilon_1^2 + \epsilon_2^2),
\end{equation}
where $\epsilon_3 = \epsilon_1 - A(x_1,x_2) \epsilon_2$ and
%$  A(x_1,x_2) = \frac{\bar\Phi(x_1)}{\phi(x_1)} \frac{\phi(x_2)}{\bar\Phi(x_2)} $.
$  A(x_1,x_2) = (\bar\Phi(x_1)\phi(x_2))/(\phi(x_1)\bar\Phi(x_2))$.

\end{lemma}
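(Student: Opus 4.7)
The plan is to prove this by direct Taylor expansion of both sides to first order in $\epsilon_1$ and $\epsilon_2$, and verify they agree. Since this is a purely analytic identity about $\bar\Phi$ with no geometry involved, the argument should be short and routine given the first-order expansion~(\ref{eq:phi-taylor}) already stated in the paper.

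First I would expand the numerator and denominator of the left-hand side using $\bar\Phi(x+\epsilon)=\bar\Phi(x)-\phi(x)\epsilon+O(\epsilon^2)$, giving
\[
\frac{\bar\Phi(x_1+\epsilon_1)}{\bar\Phi(x_2+\epsilon_2)}
=\frac{\bar\Phi(x_1)-\phi(x_1)\epsilon_1+O(\epsilon_1^2)}{\bar\Phi(x_2)-\phi(x_2)\epsilon_2+O(\epsilon_2^2)}.
\]
Then I would expand the denominator using $1/(1-a)=1+a+O(a^2)$ with $a=\phi(x_2)\epsilon_2/\bar\Phi(x_2)$, multiply out, and drop $O(\epsilon_1\epsilon_2)$ and higher terms, obtaining
\[
\frac{\bar\Phi(x_1)}{\bar\Phi(x_2)}-\frac{\phi(x_1)}{\bar\Phi(x_2)}\epsilon_1+\frac{\bar\Phi(x_1)\phi(x_2)}{\bar\Phi(x_2)^2}\epsilon_2 + O(\epsilon_1^2+\epsilon_2^2).
\]

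Next I would expand the proposed right-hand side $\bar\Phi(x_1+\epsilon_3)/\bar\Phi(x_2)$ in the same way, where $\epsilon_3 = \epsilon_1 - A(x_1,x_2)\epsilon_2$. This yields
\[
\frac{\bar\Phi(x_1)}{\bar\Phi(x_2)}-\frac{\phi(x_1)}{\bar\Phi(x_2)}\epsilon_1 + \frac{\phi(x_1)A(x_1,x_2)}{\bar\Phi(x_2)}\epsilon_2 + O(\epsilon_3^2).
\]
Substituting the definition of $A(x_1,x_2)=\bar\Phi(x_1)\phi(x_2)/(\phi(x_1)\bar\Phi(x_2))$ shows the $\epsilon_2$-coefficient becomes exactly $\bar\Phi(x_1)\phi(x_2)/\bar\Phi(x_2)^2$, matching the expansion of the left-hand side term by term. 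Since $\epsilon_3 = O(|\epsilon_1|+|\epsilon_2|)$, the remainder $O(\epsilon_3^2)$ is absorbed into $O(\epsilon_1^2+\epsilon_2^2)$, completing the identification.

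There is no real obstacle here: the lemma is purely a bookkeeping device for converting a ratio of perturbed $\bar\Phi$'s into a single perturbed $\bar\Phi$ over an unperturbed denominator, which is exactly what the earlier proofs (e.g. of Theorem~\ref{theorem:au-surface} and the bias computations with $A(v)=\bar\Phi(v)\phi(v_s)/(\phi(v)\bar\Phi(v_s))$ in Section~\ref{sec:NFT}) need repeatedly. The only care required is to keep track of which cross-terms are genuinely $O(\epsilon_1^2+\epsilon_2^2)$ versus linear; writing out the product explicitly and grouping by order of $\epsilon$ handles this cleanly.
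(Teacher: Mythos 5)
Your proposal is correct and follows essentially the same route as the paper's own proof, which simply says to apply the first-order Taylor expansion~(\ref{eq:phi-taylor}) to both sides and rearrange. You have merely spelled out the bookkeeping in full.
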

\begin{proof}
By applying the Taylor expansion (\ref{eq:phi-taylor}) to the both sides, and arranging the formula, we immediately get the result.
\end{proof}

\subsection{Proof of Lemma~\ref{lemma:geometry-of-bp}} \label{sec:proof-lemma-geometry-of-bp}
The change of coordinates is obtained from Lemma~\ref{lemma:change-of-coordinates}
by letting $y=(\theta, -s(\theta))$, $\proj(H|y) = (u,-h(u))$.
Substituting $u_i = \theta_i + O(n^{-1/2})$ into $\hat\eta(\theta) = \eta(u(\theta))$, we have
$\hat\eta(\theta) \doteq \eta_0 + \eta_i \theta_i + \eta_{ij} \theta_i \theta_j$, so
we get (\ref{eq:hat-lambda}) from (\ref{eq:hstolambda}).
Next, for showing (\ref{eq:hat-gamma}),
we consider the local coordinates $(\Delta \tilde u, \Delta \tilde v)$ at $(u,-h(u))$
with the basis $b_1(h,u),\ldots,b_m(h,u),f(h,u)$.
The surface $\Bc(h)$ is expressed as $\Delta \tilde v = -\tilde h(\Delta \tilde u)$.
The same argument is given in Lemma~1 and Lemma~2 of \cite{Shimodaira2014higherorder} but symbols $\Delta u$ and $\Delta \tilde u$ are exchanged.

%\begin{figure}
%\centering
%\includegraphics[width=0.75\textwidth]{./fig_draft/local-coord.pdf}
%\caption{Local coordinates $(\Delta \tilde u, \Delta \tilde v)$ at $(u,-h(u))$}
%\label{fig:local-coord}
%\end{figure}

For solving the equation
\[
(u+\Delta u, -h(u+\Delta u)) = (u,-h(u)) + b_i(h,u) \Delta \tilde u_i - \tilde h(\Delta \tilde u) \|f(h,u)\|^{-1} f(h,u),
\]	
we note that $ \|f(h,u)\|^{-1}  \doteq 1$, and then we get
$\Delta u_i \doteq \tilde\Delta u_i$, $\tilde h(\Delta \tilde u) \doteq h_{ij} \Delta \tilde u_i \Delta \tilde u_j$
by comparing each element.
Since $b_i(h,u)\cdot b_j(h,u) \doteq \delta_{ij}$,
$b_1(h,u),\ldots, b_m(h,u)$ form the orthonormal basis of the tangent space with the second order accuracy.
Therefore, the mean curvature of $\Delta \tilde v \doteq  h_{ij} \Delta \tilde u_i \Delta \tilde u_j$ is $h_{ii}$, which
proves (\ref{eq:hat-gamma}).

\subsection{Proof of Theorem~\ref{theorem:au-surface}} \label{sec:proof-theorem-au-surface}
We show the rest of the proof here.
Applying the Taylor expansion (\ref{eq:phi-taylor}) to the both sides of (\ref{eq:au-surface-proof1}),
\begin{equation} \label{eq:au-surface-proof2}
\begin{split}
	& \bar\Phi(h_0 - r_0) - \phi(h_0 - r_0) \{ (h_i - r_i) \theta_i + (h_{ij} - r_{ij}) \theta_i \theta_j  - r_{ii}\} \\
   \doteq
	\alpha & \bar\Phi(h_0 - s_0) - \alpha \phi(h_0 - s_0) \{ (h_i - s_i) \theta_i + (h_{ij} - s_{ij}) \theta_i \theta_j  - s_{ii}\}.
\end{split}
\end{equation}
By comparing the coefficients of the terms of $\theta_i$ and $\theta_i \theta_j$, we get
\[
	r_i \doteq h_i - \alpha C^\prime (h_i - s_i),\quad
	r_{ij} \doteq h_{ij} - \alpha C^\prime (h_{ij} - s_{ij}),
\]
where $C^\prime = \phi(h_0 - s_0)/\phi(h_0 - r_0)$.
From the constant term,
\begin{equation} \label{eq:au-surface-proof3}
	 h_0 - r_0 = \bar\Phi^{-1}(\alpha \bar\Phi(h_0 - s_0)) + O(n^{-1/2}),
\end{equation}
which implies $C^\prime=C+O(n^{-1/2})$.
Thus we get the formula of $r_i$ and $r_{ij}$ in the theorem.
For showing $R\subset S$, note that $r_0 < s_0$ for $0<\alpha<1$ by ignoring $O(n^{-1/2})$ in (\ref{eq:au-surface-proof3}).
Since $s(u)-r(u) = s_0 - r_0 + O(n^{-1/2})$, we have $\lim_{n\to\infty} (s(u)-r(u))>0$.
For showing (\ref{eq:au-surface-alpha}), substitute $r_{ii} \doteq h_{ii} - \alpha C (h_{ii} - s_{ii})$ into (\ref{eq:au-surface-proof2}), and rearranging the constant term, we get
$\bar \Phi(h_0 - r_0) + \phi(h_0 - r_0) h_{ii} \doteq
	\alpha  \bar\Phi(h_0 - s_0) +  \alpha \phi(h_0 - s_0) h_{ii}$.
By applying (\ref{eq:phi-taylor}) to it, we get $\bar\Phi(h_0 - r_0 - h_{ii}) \doteq \alpha \bar\Phi(h_0 - s_0 - h_{ii}) $,
 proving (\ref{eq:au-surface-alpha}), and also the formula of $r_0$ as well.
We had assumed that $r\in \Tc$ in the beginning, and the obtained $r$ is in fact $r\in \Tc$.
By substituting this $r$ into (\ref{eq:selective-rejection-probability}) and follow the calculation so far,
we verify that it is the solution of (\ref{eq:selective-rejection-probability}).

\subsection{Proof of Theorem~\ref{theorem:au-pvalue}} \label{sec:proof-theorem-au-pvalue}
We show the rest of the proof here.
Let us define $r$ by considering the surface $\Bc(r) = \{ y \mid p_\mathrm{SI}(H | S,y)=\alpha\}$ for the $p$-value given in (\ref{eq:au-pvalue}).
We will verify that this $r$ coincides with the $r$ of $R$ in Theorem~\ref{theorem:au-surface}.
$\Bc(r)$ is interpreted as the surface obtained by shifting points $(u,-h(u))$, $u\in\mathbb{R}^m$ on $\Bc(h)$ to the normal direction by a signed distance $\eta(u)$, which is defined below.
From (\ref{eq:au-surface-alpha}), $y\in \partial R$ at $u=0$, so 
$\eta(0) \doteq h_0 - r_0 \doteq \bar\Phi^{-1} (\alpha \bar\Phi(h_0 - s_0 - h_{ii})) + h_{ii}$.
$\eta(u)$, $u\in\mathbb{R}^m$, is obtained by replacing the geometric quantities in $\eta(0)$ by those at $(u,-h(u))$.
$h_0-s_0$ is the signed distance from $\Bc(h)$ to $\Bc(s)$, so it is replaced by $h(u)-s(u)$ according to Lemma~\ref{lemma:change-of-coordinates}.
$h_{ii}$ is the mean curvature, and it is replaced by $h_{ii}+O(n^{-1})$ according to Lemma~\ref{lemma:geometry-of-bp}.
We then have $\eta(u) \doteq \bar\Phi^{-1} (\alpha \bar\Phi(h(u) - s(u) - h_{ii})) + h_{ii}$.
This is rearranged as $\eta(u) \doteq \eta(0) + \alpha C(h_i - s_i)u_i + \alpha C (h_{ij} - s_{ij}) u_i u_j$ by Taylor expansion. 
Since $r(u) \doteq h(u) - \eta(u)$ from Lemma~\ref{lemma:change-of-coordinates},
by comparing the coefficients, we verify that this $r(u)$ coincides with that in
Theorem~\ref{theorem:au-surface} with error $O(n^{-1})$.

\subsection{Proof of Theorem~\ref{theorem:double-bootstrap}} \label{sec:proof-theorem-double-bootstrap}

First we give the expression for $  p_{\mathrm{BP},1}(H|S,y) $.
From (\ref{eq:psi-hsq}),  the numerator is $\bar\Phi(\psi_{\sigma^2}(H|y)) \doteq \bar\Phi(\psi_{\sigma^2}(h|r,0)) = \bar\Phi(h_0 - r_0 + h_{ii}\sigma^2$).
Since $h(u_0) \doteq h_0$, the denominator is $P_1(Y^* \in S \mid \hat \mu) \doteq \alpha_1(\Rc(s)^c | (u_0, -h_0)) \doteq \bar\Phi(-\psi_1(s|h,u_0)) \doteq \bar\Phi(h_0 - s_0 - s_{ii})$.
Thus we get $  p_{\mathrm{BP},1}(H|S,y) $ in (\ref{eq:p-bp-1-2-lst}).

Next we give the expression for $  p_{\mathrm{BP},2}(H|S,y) $.
Let $y=(\theta,-r(\theta))$ and derive $r(\theta)$ so that $p_{\mathrm{BP},1}(H|S,y)$ takes a constant value.
The numerator is $\bar\Phi(\psi_{\sigma^2}(H|y)) \doteq \bar\Phi(\psi_{\sigma^2}(h|r,\theta)) = \bar\Phi(h_0 - r_0 + (h_i - r_i)\theta_i + (h_{ij} - r_{ij})\theta_i \theta_j + h_{ii}\sigma^2$).
By adding the error of $O(n^{-1/2})$ to the $u$-axis of $\proj(H|y)$,
we write $\hat\mu' \doteq (\theta+O(n^{-1/2}), -h(\theta)+O(n^{-1}))$.
Then the denominator is
$P_1(Y^* \in S \mid \hat \mu') = \alpha_1(\Rc(s)^c | \hat\mu') \doteq \bar\Phi(-\psi_1(s|h,\theta+O(n^{-1/2}))) \doteq \bar\Phi(h_0 - s_0 + (h_i - s_i)\theta_i + (h_{ij} - s_{ij}) \theta_i \theta_j - s_{ii})$.
We then rearrange the expression of $p_{\mathrm{BP},1}(H|S,y)$  using Lemma~\ref{lemma:phi-phi-taylor}
with $x_1 = h_0-r_0$, $x_2=h_0 - s_0$, $\epsilon_1 =  (h_i - r_i)\theta_i + (h_{ij} - r_{ij})\theta_i \theta_j + h_{ii}\sigma^2$, 
and $\epsilon_2 = (h_i - s_i)\theta_i + (h_{ij} - s_{ij}) \theta_i \theta_j - s_{ii}$.
We have 
$\epsilon_3 = h_{ii}\sigma^2 + A s_{ii} + (h_i - r_i - A(h_i - s_i)) \theta_i + (h_{ij} - r_{ij} - A(h_{ij} - s_{ij}) \theta_i \theta_j$ with $A = A(h_0-r_0,h_0-s_0)$.
Then we get
\[
  p_{\mathrm{BP},1}(H|S,(\theta,-r(\theta))) \doteq 
  \frac{\bar\Phi(h_0 - r_0 + \epsilon_3)}{\bar\Phi(h_0 - s_0)},
\]
which should be a constant value. Thus $\epsilon_3$ should take a constant value for any $\theta\in\mathbb{R}^m$.
By letting the coefficients of $\theta_i$ and $\theta_i \theta_j$  be zero in $\epsilon_3$,
we get
$r_i \doteq (1-A)h_i + As_i$ and $r_{ij} \doteq (1-A) h_{ij} + A s_{ij}$.

Using this $r$, we compute the numerator of $p_{\mathrm{BP},2}(H|S,y)$ at $y=(0,-r_0)$.
Then, $P_1( p_{\mathrm{BP},1}(H|S,Y^*) <  p_{\mathrm{BP},1}(H|S,y)  \mid \hat\mu) \doteq \alpha_1(\Rc(r)^c | (u_0,-h_0)) \doteq \bar\Phi(-\psi_1(r|h,u_0)) \doteq \bar\Phi(h_0 - r_0 - r_{ii}) \doteq \bar\Phi(h_0 - r_0 - (1-A)h_{ii} - As_{ii})$.
The denominator is the same as $p_{\mathrm{BP},1}(H|S,y)$.
Therefore, by applying Lemma~\ref{lemma:phi-phi-taylor}, we have
\[
    p_{\mathrm{BP},2}(H|S,y) \doteq \frac{\bar\Phi(h_0 - r_0 - (1-A)h_{ii} - As_{ii})}{\bar\Phi(h_0 - s_0 - s_{ii})}
    \doteq  \frac{\bar\Phi(h_0 - r_0 - h_{ii})}{\bar\Phi(h_0 - s_0 - s_{ii} + (s_{ii}-h_{ii}))},
\]  
proving $  p_{\mathrm{BP},2}(H|S,y) $ in (\ref{eq:p-bp-1-2-lst}).

The last statement comes from the fact that
$p_{\mathrm{BP},1}(H|S,y) \doteq p_{\mathrm{BP},2}(H|S,y)$ when $s_{ii} \doteq h_{ii}$ and $\sigma^2=-1$.

%%%=================================================
%%
%

%
%%
%%%=================================================
\section{Proofs for the theory of nearly flat surfaces}

In this section, we provide the remaining proofs of Section~\ref{sec:NFT}.

\subsection{Proof of Lemma~\ref{lemma:geometry-of-bp:NFS}}\label{sec:proof-geometry-of-bp:NFS}

The argument is parallel to that of Section~\ref{sec:scaling-law}.
This is shown in Section~5.3 of \cite{Shimodaira08}. 
From the definition and (\ref{eq:bpmodel}),
\[
\alpha_{\sigma^2}(H | y)
=P_{\sigma^2}(V^\ast \le v_h-h(U^\ast)\mid y)
= E_{\sigma^2}\biggl[ \bar\Phi\biggl(\frac{v-v_h+h(U^\ast)}{\sigma}\biggr)\,\biggm| u \biggr].
\]
Let $x=(v-v_h+E_{\sigma^2}h(u))/\sigma$ and $\epsilon=(h(U^\ast) - E_{\sigma^2}h(u))/\sigma$.
We have $\alpha_{\sigma^2}(H | y)=E_{\sigma^2}(\bar\Phi(x+\epsilon) | u)$.
Since $E_{\sigma^2}(\epsilon | u)=0$, considering the Taylor expansion (\ref{eq:phi-taylor}),
we have
$
\alpha_{\sigma^2}(H | y)
=\bar\Phi( (v- v_h+E_{\sigma^2}h(u))/\sigma ) + O(\lambda^2).
$

\subsection{Proof of Lemma~\ref{lemma:rej-surface-nfs}}\label{sec:proof-rej-surface-nfs}

For $\mu=(\theta,-h(\theta))\in \partial H$, by Lemma~\ref{lemma:geometry-of-bp:NFS},
we have
$
P_1(Y\in S^c\mid \mu) =\alpha_1(\Rc(s,v_s)\mid \mu) \simeq \bar\Phi(-h(\theta)-v_s+E_1s(\theta))
\simeq  \Phi(v_s)+\phi(v_s)\{h(\theta)-E_1s(\theta)\}.
$
We proceed by assuming  $r$ is nearly flat. Then,
$
P_1(Y\in R^c\mid \mu)
\simeq \bar\Phi(-h(\theta)-v_r+E_1r(\theta))
\simeq  \Phi(v_r)+\phi(v_r)\{h(\theta)-E_1r(\theta)\}.
$
Thus, from (\ref{eq:selec-rej-prob-nfs}), we obtain 
$
\bar\Phi(v_r)+\phi(v_r)\{E_1r(\theta)-h(\theta)\}
\simeq \alpha[ \bar\Phi(v_s)+\phi(v_s)\{E_1s(\theta)-h(\theta)\} ].
$
Subtracting $\bar\Phi(v_r)=\alpha \bar\Phi(v_s)$ from both sides, we have
$\phi(v_r)\{E_1r(\theta)-h(\theta)\}
\simeq
\alpha\phi(v_s)\{E_1s(\theta)-h(\theta)\}$.
Using the notation $C=\phi(v_s)/\phi(v_r)$, we have
$
E_1r(\theta) \simeq h(\theta) + \alpha C\{E_1s(\theta)-h(\theta)\}.
$
Applying  the inverse operator $E_{-1}$ to both sides, we get
$
r(u) \simeq E_{-1}h(u) + \alpha C\{s(u)-E_{-1}h(u)\}.
$
Since $E_{-1}h$ and $s$ are nearly flat, $r$ is also nearly flat.
By following the calculation so far, we can verify that this $r$ in fact satisfies (\ref{eq:selec-rej-prob-nfs}),
and therefore (\ref{eq:unbias-rej-surface}) is the solution of (\ref{eq:selec-rej-prob-nfs}).

For proving (\ref{eq:rej-surface-alpha}), 
first note that 
$
\phi(v_r)\{r(u)-E_{-1}h(u)\} \simeq \alpha \phi(v_s)\{s(u)-E_{-1}h(u)\}
$
from (\ref{eq:unbias-rej-surface}).
Combining this with $\bar\Phi(v_r)=\alpha \bar\Phi(v_s)$,  we obtain
$
\bar\Phi(v_r)+\phi(v_r)\{r(u)-E_{-1}h(u)\} \simeq \alpha [\bar\Phi(v_s)+\phi(v_s)\{s(u)-E_{-1}h(u)\}].
$
Thus, considering Taylor expansion (\ref{eq:phi-taylor}), we conclude
$
\bar\Phi(v_r-r(u)+E_{-1}h(u)) \simeq \alpha \bar\Phi(v_s-s(u)+E_{-1}h(u))
$.

%---
\subsection{Proof of Theorem~\ref{theo:gen-pval-si}}\label{sec:proof-gen-pval-si}

We first prove the existence of $r_k$.
Let $\tilde{r}_k$ denote the function before applying $\Fc^{-1}$.
It is sufficient to prove that $\|\tilde{r}_k\|_1<\infty$.
From the condition (iii), we have
\begin{align*}
\|\tilde{r}_k(\omega)\|_1
%&\le
%\|\{1-A(v_r)-J_k(\omega)\}e^{\|\omega\|^2/2}\tilde{h}(\omega) \|_1 
%+ \|\{A(v_r)-e^{\|\omega\|^2/2}I_k(\omega)\}\tilde{s}(\omega)\|_1\\
  & \le \|\{1-A(v_r)-J_k(\omega)\}e^{\|\omega\|^2/2}\|_\infty \, \|\tilde{h}(\omega) \|_1 \\
  &\quad + \{A(v_r)+\|e^{\|\omega\|^2/2}I_k(\omega)\|_\infty\} \, \|\tilde{s}(\omega)\|_1\\
  & <\infty.
\end{align*}
Thus, $r_k$ exists for each $k$.

Next, for a nearly flat function $r$, we consider a general $p$-value $p(H | S, (u,v))$ which has the following representation:
$
p(H | S, (u,v))=\bPhi(v+r(u))/\bPhi(v_s).
$
From (\ref{eq:bp-approx}), we have 
$
P_1(p(H | S,Y)<\alpha\mid \theta,-h(\theta))
\simeq
1-P_1(V\le v_r -r (U)\mid \theta,-h(\theta))
\simeq 
\bPhi(h(\theta)+v_r-E_1r(\theta))
$ 
and 
$
P_1(Y\in S\mid \theta,-h(\theta))
\simeq
\bPhi(h(\theta)+v_s-E_1s(\theta)).
$
For $\bPhi(v+a)/\bPhi(v_s+b)$ with terms $a$ and $b$ of order $O(\lambda)$, 
in the same manner as Lemma~\ref{lemma:phi-phi-taylor}, we have 
\banum\label{eq:rep-trans}
\frac{\bPhi(v+a)}{\bPhi(v_s+b)}\simeq \frac{\bPhi(v+r(u))}{\bPhi(v_s)}
\eanum
where $r(u)=a-A(v)b$.
In (\ref{eq:rep-trans}), letting $v=v_r$, $a=h(\theta)-E_1r(\theta),\;b=h(\theta)-E_{1}s(\theta)$,
we have that, for $\mu=(\theta,-h(\theta)) \in \partial H$,
\ba
\frac{P_1(p(H |S, Y)<\alpha\mid \mu)}{P_1(Y\in S\mid \mu)}
&\simeq
\frac{\bar{\Phi}(h(\theta)+v_r -E_1r(\theta))}{\bar{\Phi}(h(\theta)+v_s -E_1s(\theta))}\\
&\simeq
\frac{\bPhi\lsb v_r+h(\theta)-E_1r(\theta)
-A(v_r)\{ h(\theta) -E_{1}s(\theta) \}  \rsb}{\bPhi(v_s)}.
%\frac{P_1(v_r + \mathrm{bias}(\theta))}{\bar{\Phi}(v_s)}
\ea
If $h(\theta)-E_1r(\theta)-A(v_r)\{h(\theta)-E_1s(\theta)\} =0$, 
then the test using $p(H |S, y)$ is unbiased erring only $O(\lambda^2)$.
Thus, we will denote it by $\mathrm{bias}(\theta)$.
The function $r=r_\mathrm{SI}$ satisfying $\mathrm{bias}(\theta)= 0$ is given by
\banum\label{eq:unbiased-surf}
r_{\mathrm{SI}}(u) 
&= E_{-1}h(u)-A(v_r)\{E_{-1}h(u)-s(u)\}\nonumber \\
&=(1-A(v_r))E_{-1}h(u)+A(v_r)s(u).
\eanum
Note that $r_{\mathrm{SI}}(u) \simeq r(u)$ of (\ref{eq:unbias-rej-surface}) since $A(v_r)=\alpha C$.
%For $v=v_r+O(\lambda)$, 
%this function is equivalent to (\ref{eq:unbias-rej-surface}) ignoring $O(\lambda^2)$ terms.
%Let $\tilde{r}$ and $\tilde{r}_\mathrm{SI}$ denote Fourier transforms of $r$ and $r_\mathrm{SI}$,
%respectively. 
%The function $\mathrm{bias}(\theta)$ can be represented by
Since
$
\mathrm{bias}(\theta) 
= E_1r_{\mathrm{SI}}(\theta)-E_1r(\theta) 
%=
%\Fc^{-1}\lsb e^{-\|\omega\|^2/2}\{\tilde{r}_\mathrm{SI}(\omega)-\tilde{r}(\omega) \}\rsb (\theta).
$,
%
%Let $\widetilde{\mathrm{bias}}(\omega)$ be 
the Fourier transform of $\mathrm{bias}(\theta)$ is
%We have
\banum\label{eq:bias-fourier}
\widetilde{\mathrm{bias}}(\omega)=e^{-\|\omega\|^2/2}\{\tilde{r}_\mathrm{SI}(\omega)-\tilde{r}(\omega) \},
\eanum
where 
 $\tilde{r}$ and $\tilde{r}_\mathrm{SI}$ denote Fourier transforms of $r$ and $r_\mathrm{SI}$, respectively.
Note that the Fourier transform of $r_\mathrm{SI}$ is given by
\[
\tilde{r}_\mathrm{SI}(\omega)=(1-A(v_r))e^{\|\omega\|^2/2}\tilde{h}(\omega)+A(v_r)\tilde{s}(\omega).
\]

In (\ref{eq:bias-fourier}), replacing $\tilde{r}$ with $\tilde{r}_k$,
$\widetilde{\mathrm{bias}}_k(\omega)$ for $p_{k}(H | S, y)$ is represented by
$$
\widetilde{\mathrm{bias}}_k(\omega)
=J_k(\omega)\tilde{h}(\omega)+I_k(\omega)\tilde{s}(\omega).
$$
The condition (i) implies $\lim_{k\rightarrow \infty}\widetilde{\mathrm{bias}}_k(\omega)=0$ for each $\omega \in \Rb$. From the condition (ii), we have
\[
\left|e^{i\omega\cdot u}\widetilde{\mathrm{bias}}_k(\omega) \right|\le |J_k(\omega)\tilde{h}(\omega)|+|I_k(\omega)\tilde{s}(\omega)|
\le C\{ |\tilde{h}(\omega)|+|\tilde{s}(\omega)|\}.
\]
Moreover, $C\{ |\tilde{h}(\omega)|+|\tilde{s}(\omega)|\}$ is integrable 
since $\|\tilde{h}(\omega)\|_1,\|\tilde{s}(\omega)\|_1<\infty$.
Combining these results with Lebesgue's dominated convergence theorem,
\ba
\lim_{k\rightarrow \infty} \Fc^{-1}\lsb \widetilde{\mathrm{bias}}_k(\omega) \rsb(\theta)
&=\lim_{k\rightarrow \infty} \frac{1}{(2\pi)^m}\int e^{i\omega\cdot\theta}\widetilde{\mathrm{bias}}_k(\omega) \,d\omega\\
&= \frac{1}{(2\pi)^m}\int e^{i\omega\cdot\theta}\lim_{k\rightarrow \infty}\widetilde{\mathrm{bias}}_k(\omega) \,d\omega
=0.
\ea
Hence, we conclude that $\lim_{k\rightarrow \infty} {\mathrm{bias}}_k(\theta)=0$ for each $\theta$, 
which proves (\ref{eq:conv-gen-pval-si}).

Next, we consider the case that
$h$ and $s$ can be represented by polynomials of degree less than or equal to $2k-1$.
Let the condition (iv) holds. 
Then, according to the argument in Section~A.7 of \cite{Shimodaira08},
we have $\Fc^{-1}[J_k(\omega)\tilde{h}(\omega)]=0$ and $\Fc^{-1}[I_k(\omega)\tilde{s}(\omega)]=0$, 
which implies ${\mathrm{bias}}_k(\theta)=0$.
The last part comes from the idea that, for $h(u)=u_1^{b_1} \cdots u_m^{b_m}$, $\tilde h(\omega) \propto \delta^{(b_1)}(\omega_1)\cdots \delta^{(b_m)}(\omega_m)$, where $\delta^{(k)}$ is the $k$-th derivative of the Dirac delta function, so
$\int \tilde h(u) \omega_1^{c_1}\cdots \omega_m^{c_m} d\omega =0$ for 
$b_1+\cdots+b_m\le 2k-1$, $c_1+\cdots+c_m\ge2k$.

%---

%---
\subsection{Proof of Lemma~\ref{lemma:taylor-pval-si}}\label{sec:proof-taylor-pval-si}

For a general region $H=\Rc(h,v_{h})$, 
noting the scaling-law $\psi_{\sigma^2}(H|y)\simeq v-v_h + \mathcal{F}^{-1} [e^{-\sigma^2\|\omega\|^2/2} \tilde h(\omega)](u)$,
we have
\ba
\psi_{\sigma_a^2,k}(H | y,\sigma_{b}^2)
&\simeq v-v_h+\sum_{j=0}^{k-1}
\frac{(\sigma_a^2-\sigma_{b}^2)^j}{j!}
\frac{\partial^j}{\partial (\sigma^2)^j}\biggm|_{\sigma^2=\sigma_{b}^2}
\mathcal{F}^{-1}\left[\tilde{h}(\omega)e^{-\sigma^2\|\omega\|^2/2} \right](u)\\
&=v-v_h+
\mathcal{F}^{-1}\Biggl[
\tilde{h}(\omega)e^{-\sigma_{b}^2\|\omega\|^2/2}
\Biggl\{ \sum_{j=0}^{k-1}\frac{(\sigma_a^2-\sigma_{b}^2)^j}{j!}\left(-\frac{\|\omega\|^2}{2}\right)^j 
\Biggr\}\Biggr](u)\\
&=v-v_h+
\mathcal{F}^{-1}\left[\tilde{h}(\omega)e^{-\sigma_a^2\|\omega\|^2/2}
\left\{ 1- G_k(\omega | \sigma_a^2,\sigma_{b}^2)\right\}\right](u),
\ea
where the last equation comes from the definition of $G_k$ and the identity
\[
	\frac{\gamma(k,x)}{\Gamma(k)} = 1 - e^{-x} \sum_{j=1}^{k-1} \frac{x^j}{j!}
\]
with $x = (\sigma_b^2 - \sigma_a^2 ) \|\omega\|^2/2$.
By defining 
\[
  \tilde{h}_{\sigma_a^2;\sigma_b^2,k}(\omega)
=\tilde{h}(\omega)\{ 1-G_{k}(\omega | \sigma_a^2,\sigma_b^2) \},\quad
h_{\sigma_a^2;\sigma_b^2,k}(u)=\Fc^{-1}[\tilde{h}_{\sigma_a^2;\sigma_b^2,k}(\omega) ](u),
\]
this is rearranged as
$\psi_{\sigma_a^2,k}(H | y,\sigma_{b}^2) 
\simeq 
v-v_h+\Fc^{-1}[ e^{-\sigma_a^2\|\omega\|^2/2} \tilde{h}_{\sigma_a^2;\sigma_b^2,k}(\omega) ](u)
= %\simeq 
v-v_h+E_{\sigma_a^2} h_{\sigma_a^2;\sigma_b^2,k}(u)$.
%\simeq 
%v-v_h+\Fc^{-1}\lsb e^{-\sigma_a^2\|\omega\|^2/2} \tilde{h}_{\sigma_a^2;\sigma_b^2,k}(\omega) \rsb(u).
%\ea

For the hypothesis region $H=\Rc(h,0)$ and the selective region $S=\Rc(s,v_s)^c$, 
we define 
$
h_k(u) 
=h_{-1;\sigma_{-1}^2,k}(u)=\Fc^{-1}[\tilde{h}(\omega) ( 1 - G_{k}(\omega | -1,\sigma_{-1}^2) ) ](u)
$
and
$
s_k(u) 
= s_{0;\sigma_{0}^2,k}(u)=\Fc^{-1}[ \tilde{s}(\omega) (1 - G_{k}(\omega | 0,\sigma_{0}^2) )](u).
$
Using these notations, we have
$
\psi_{-1,k}(H | y,\sigma_{-1}^2) \simeq v+E_{-1}h_k(u)
$
and 
$ 
\psi_{0,k}(S | y,\sigma_0^2) \simeq -v + v_s -s_k(u).
$
Combining these, we obtain an expression
\[
p_{\mathrm{SI},k}(H | S,y)\simeq \frac{\bPhi(v+E_{-1}h_k(u))}{\bPhi(v_s-s_k(u)+E_{-1}h_k(u))}.
\]

%We next show the representation (\ref{eq:gen-pval-si}) for $p_{\mathrm{SI},k}(H | S,y)$.
%Similar with Lemma~\ref{lemma:phi-phi-taylor},
We recall that the function $r$ satisfying (\ref{eq:rep-trans}) can be represented by
$r(u) = a-A(v)b$. In this case, $a=E_{-1}h_k(u)$ and $b=-s_k(u)+E_{-1}h_k(u)$.
Thus, the function $r_k$ corresponding to $p_{\mathrm{SI},k}(H | S,y)$ is given by
\ba
r_k(u,v)
&= E_{-1}h_k(u)-A(v)\lpar -s_k(u)+E_{-1}h_k(u) \rpar\\
&= (1-A(v))E_{-1}h_k(u)+A(v)s_k(u),
\ea
and $r_k(u)=r_k(u,v_r)$.
Let $\tilde{r}_k(\omega)$ denote the Fourier transform of $r_k$.
We have
$
\tilde{r}_k(\omega)
= (1-A(v_r))e^{\|\omega\|^2/2}\tilde{h}_k(\omega)+A(v_r)\tilde{s}_k(\omega)
$.
By comparing this with $r_k(u)$ of Theorem~\ref{theo:gen-pval-si}, we obtain
\[
  J_k(\omega)=(1-A(v_r))G_{k}(\omega | -1,\sigma_{-1}^2),\quad
  I_k(\omega)=A(v_r)e^{-\|\omega\|^2/2}G_{k}(\omega | 0,\sigma_{0}^2).
\]
%Moreover, $\widetilde{\mathrm{bias}}_k(\omega)$ is represented as
%\ba
%\widetilde{\mathrm{bias}}_k(\omega)
%&=e^{-\|\omega\|^2/2}\lpar \tilde{r}_{\mathrm{SI}}(\omega)-\tilde{r}_k(\omega) \rpar\\
%&=e^{-\|\omega\|^2/2}\lsb  
%\lbr (1-A(v))e^{\|\omega\|^2/2}\tilde{h}(\omega) + A(v)\tilde{s}(\omega) \rbr  
%-\lbr (1-A(v))e^{\|\omega\|^2/2}\tilde{h}_k(\omega) + A(v)\tilde{s}_k(\omega) \rbr  
%\rsb\\
%&=(1-A(v))(\tilde{h}(\omega)-\tilde{h}_k(\omega)) + e^{-\|\omega\|^2/2}A(v)(\tilde{s}(\omega)-\tilde{s}_k(\omega)).
%\ea

We next show that $I_k$ and $J_k$ satisfy the conditions (i), (ii), (iii), (iv) 
in Theorem~\ref{theo:gen-pval-si}.
From $0\le \gamma (k,z)\le \Gamma(k)$, it is easy to check the condition (ii).
Based on (2.133) in \cite{GilEtAl07} and 
Stirling's approximation (i.e., $\Gamma(k+1)\sim \sqrt{2\pi k}(k/e)^k$),
it follows that 
\[
\frac{\gamma(k,z)}{\Gamma(k)}
=\frac{e^{-z}z^k}{\Gamma(k+1)}\{1+O(k^{-1})\}
\sim 
\frac{1}{\sqrt{2\pi}}\frac{e^{k-z}z^k}{k^{k+1/2}}\rightarrow 0\;\text{ as }\; k \rightarrow \infty.
\]
That is, for each $z$, $\lim_{k\rightarrow \infty} \gamma(k,z)/\Gamma(k)=0$, 
and the condition (i) is confirmed.
From the definition of $J_k$ and $G_k$, we have
$
1-A(v_r)-J_k(\omega)=(1-A(v_r))(1-G_k(\omega | -1,\sigma_{-1}^2))
=(1-A(v_r))e^{-x}\sum_{j=0}^{k-1}x^j/j!$,
where $x=(1+\sigma_{-1}^2)\|\omega\|^2/2$.
Then
$
(1-A(v_r)-J_k(\omega))e^{\|\omega\|^2/2}
=(1-A(v_r))\exp((1-(1+\sigma_{-1}^2))\|\omega\|^2/2)\sum_{j=0}^{k-1}\frac{x^j}{j!}
$
with the coefficient of the exponent $1-(1+\sigma_{-1}^2) = -\sigma_{-1}^2<0$.
So we can see that the condition (iii) of $J_k(\omega)$ is satisfied.
Similarly, since $|1-G_k(\omega | \sigma_a^2,\sigma_b^2)|<\infty$ for $\sigma_a^2< \sigma_b^2$,
 the condition (iii) for $I_k(\omega)$ is also satisfied.
From the last expression of $G_k(\omega)$ given in this lemma, it is represented as $\sum_{j=k}^\infty c_{k,j}\|\omega\|^{2j}$ with some coefficients $c_{k,j}$.
Hence, $I_k$ and $J_k$ also satisfy the condition (iv).

%---

%---
\subsection{Proof of Lemma \ref{lemma:ibp-pval-si}}\label{sec:proof-ibp-pval-si}

For $y=(u,v)\in \Rb^{m+1}$, let $\proj(H|y)\simeq(u^\prime,v^\prime)$.
From the Lipschitz continuity of $h$, it follows that $u^\prime=u+O(\lambda)$, $v^\prime\simeq -h(u)$.
Combining this fact with (\ref{eq:bp-approx}), we have 
$\bar\Phi(\psi_{\sigma^2}(H | y))\simeq \bPhi(v+E_{\sigma^2}h(u))$
and
$P_1(Y^\ast \in S \mid \proj(H|y))\simeq \bar{\Phi}(h(u)+v_s-E_1s(u))$.
This gives
\ba
p_{\mathrm{BP},1}(H\mid S, y)
\simeq
\frac{\bPhi(v+E_{\sigma^2}h(u))}{\bar{\Phi}(h(u)+v_s-E_1s(u))}.
\ea
From (\ref{eq:rep-trans}), we thus get
$r_1(u) \simeq E_{\sigma^2}h(u)-A(v_r)[h(u)-E_1s(u)]$ for $v=v_r+O(\lambda)$.
Then
$
\tilde{r}_1(\omega)
\simeq
e^{-\sigma^2\|\omega\|^2/2}\tilde{h}(\omega)-A(v_r)[ \tilde{h}(\omega)-e^{-\|\omega\|^2/2}\tilde{s}(\omega) ].
$
  
Let us assume the form of (\ref{eq:gen-pval-si}) as
$p_{\mathrm{BP},k}(H | S,y) \simeq \bar\Phi(v + r_k(u))/\bar\Phi(v_s)$ for $v=v_r+O(\lambda)$.
For $Y^*=(U^*,V^*)$,
$
P_1( p_{\mathrm{BP},k}(H | S,Y^*) \le p_{\mathrm{BP},k}(H | S,y) \mid \proj(H|y))
\simeq P_1(V^\ast + r_k(U^\ast)\ge v + r_k(u)\mid \proj(H|y))
\simeq \bar{\Phi}(h(u)+v+r_k(u)-E_1r_k(u)).
$
Thus, we have 
\[
p_{\mathrm{BP},k+1}(H | S,y)  \simeq \frac{\bar{\Phi}(h(u)+v+r_k(u)-E_1r_k(u))}{\bar{\Phi}(h(u)+v_s-E_1s(u))},
\]
showing the form  (\ref{eq:gen-pval-si}) is correct by induction.
From (\ref{eq:rep-trans}) again,
$
r_{k+1}(u) \simeq r_k(u)-E_1r_k(u) + h(u) - A(v_r)\{h(u)-E_1s(u)\}.
$
Applying the Fourier transform to the both sides, we have
$
\tilde{r}_{k+1}(\omega) 
\simeq 
(1-e^{-\|\omega\|^2/2})\tilde{r}_k(\omega)+\tilde{h}(\omega)
-A(v_r)\{\tilde{h}(u)-e^{-\|\omega\|^2/2}\tilde{s}(u)\}.
$
By solving this recurrence relation,
we have the specific forms of  $I_k$ and $J_k$ given in this lemma.

We next show that $I_k$ and $J_k$ satisfy the conditions (i), (ii), (iii), (iv) in Theorem~\ref{theo:gen-pval-si}.
Let $x=e^{-\|\omega\|^2/2}$ and $A=A(v_r)$ for short, and write $I_k = (1-x)^{k} A x$, $J_k=(1-x)^{k-1}(1-x^{1+\sigma^2} - A(1-x))$. Noting $0<x\le 1$ and
\[
  I_k = (1-x)^{k-1} I_1,\quad J_k= (1-x)^{k-1} J_1,
\]
we have $\lim_{k\to\infty} (1-x)^{k-1}=0$, showing (i) holds for $I_k$ and $J_k$.
The condition (ii) also holds because $||I_k||_\infty \le  \| I_1  \|_\infty < \infty$ and
$||J_k||_\infty \le  \| J_1  \|_\infty < \infty$ for $\sigma^2>-1$.
Next consider (iii). $(1-A-J_k)x^{-1} = (1-A)x^{-1}(1 - (1-x)^{k-1}) + (1-x)^{k-1}(x^{\sigma^2}-A )$, from which only terms of $x^j$ and $x^{j+\sigma^2}$, $j=0,1,\ldots,k-1$ appear. $I_k x^{-1} = (1-x)^k A$ has only terms of $x^j$, $j=0,1,\ldots,k$.
All these terms are bounded for $\sigma^2>0$, and so (iii) holds for $I_k$ and $J_k$.
For (iv), first note that $1-x^a = -\sum_{j=1}^\infty (-a \|\omega\|^2/2)^j/j!$ has only terms of $\|\omega\|^{2j}$, $j\ge1$, and so $(1-x)^{k-1}$ has only those of $j\ge{k-1}$. Since $I_1$ and $J_1$ have only those of $j\ge 1$, we conclude that $I_k$ and $J_k$ have only those of $j\ge k$, which shows (iv).

%%%=================================================
%%
%

% %\clearpage
% \bibliographystyle{imsart-nameyear}
% \bibliography{simbp_bib,stat2017}

\end{document}